\documentclass{amsart}
\usepackage[foot]{amsaddr}
\usepackage{amssymb}
\usepackage{mathtools}
\usepackage[hidelinks]{hyperref}
\usepackage[numbered]{bookmark}     % for decoupling the depth of booksmarks from the depth in the displayed table of contents
\usepackage[maxbibnames=10, 
  url=false, 
  doi=false, 
  date=year,
  isbn=false]{biblatex}
\usepackage{enumitem}   % for enumerating with lowercase romans
\usepackage{braket}     % for bra-ket notation
\usepackage{todonotes}  % for to-do notes
\usepackage{comment}
\newcommand{\comments}[1]{}
\usepackage{multicol}
\usepackage{array}      % for centered columns in tabular
\usepackage{rotating} %for rotating a tabular
\usepackage{arydshln} %for a dashed line
\usepackage{booktabs} 
\usepackage{array}
\usepackage{tabu} %for tabulars 

\title{Complete Classification of Directed Quantum Graphs on $M_2$}
\author{Nina Kiefer and Bj\"orn Sch\"afer}
\email{kiefer@math.uni-sb.de, bschaefer@math.uni-sb.de}
\address{Department of Mathematics, Saarland University, D-66123 Saarbr\"ucken, Germany}
\date{\today}

% Environments
%\newtheorem*{notation}{Notation}
\newtheorem{thm}{Theorem}[section]
\newtheorem{definition}[thm]{Definition}
\newtheorem{lemma}[thm]{Lemma}

\newtheorem{proposition}[thm]{Proposition}
\newtheorem*{proposition*}{Proposition}

\newtheorem*{lemma*}{Lemma}
\newtheorem*{definition*}{Definition}

\newtheorem{example}[thm]{Example}
\newtheorem{remark}[thm]{Remark}

% nur für diese Arbeit

% nützlich zum Schreiben

\newcommand{\cc}{\vcentcolon}

% lineare Algebra
\newcommand{\Tr}{\mathrm{Tr}}

% Zahlen
\newcommand{\R}{\mathbb{R}}
\newcommand{\C}{\mathbb{C}}

\newcommand{\N}{\mathbb{N}}

% Gruppen
\DeclareMathOperator{\SO}{SO}

% Mathematische Operatoren

\DeclareMathOperator{\spann}{span}
\DeclareMathOperator{\op}{op}
\DeclareMathOperator{\Id}{Id}

\DeclareMathOperator{\KMS}{KMS}

% Sonstiges

\addbibresource{references2.bib}
\AtEveryBibitem{\clearlist{language}} %delete language

\begin{document}

\begin{abstract}
    In 2022,  Gromada and Matsuda classified undirected quantum graphs on the matrix algebra $M_2$ \cite{gromada_examples_2022, matsuda_classification_2022}. Later, Wasilweski provided a solid theory of directed quantum graphs \cite{wasilewski_quantum_2024} which was formerly only established for undirected quantum graphs. Using this framework we extend the results of Matsuda and Gromada, and present a complete classification of directed quantum graphs on $M_2$. Most prominently, we observe that there is a far bigger range of directed quantum graphs than of undirected quantum graphs on $M_2$. Moreover, for quantum graphs on a nontracial quantum set $(M_2, \psi)$ we illustrate the difference between GNS- and KMS-undirected quantum graphs.
    %Extending previous results from Matsuda \cite{matsuda_classification_2022} and Gromada \cite{gromada_examples_2022}, we present a complete classification of directed quantum graphs on quantum sets of the form $(M_2, \psi)$. This is motivated by recent results of Wasilewski \cite{wasilewski_quantum_2024} which provide a solid theory of directed quantum graphs which formerly was only established for undirected quantum graphs.
\end{abstract}

\maketitle

\setcounter{tocdepth}{1}    % do not include subsections in the displayed table of contents
\bookmarksetup{depth=2}     % include subsections in PDF bookmarks
\tableofcontents

\section{Introduction}

Quantum graphs are a generalization of ordinary graphs where the vertex set $\{1, \dots, n\}$ is replaced with a quantum set $(B, \psi)$ consisting of a finite dimensional $C^\ast$-algebra $B$ and a particular faithful functional $\psi$ on $B$.
Ordinary graphs are retained as the special case where $B = \C^n$ and $\psi(e_i) = \frac{1}{n}$ for $i=1,\dots, n$.
The edge relation of an ordinary graph can be generalized to the quantum graph setting in three different ways: As a $B^\prime$-$B^\prime$-bimodule $S \subset B(\mathcal{H})$ given an embedding $B \subset B(\mathcal{H})$, as a projection $P \in B \otimes B^\mathrm{op}$ or as a particular linear map $A: B \to B$. The latter is usually called a quantum adjacency matrix. 
Three important properties of quantum graphs are 1) being undirected, 2) being tracial, meaning that $\psi$ is a tracial functional, and 3) being reflexive/loopfree\footnote{The reflexive quantum graphs on a fixed quantum set $(B, \psi)$ are in a 1-1-correspondence with loopfree quantum graphs.}.

Let us give a short and selective summary of the history of quantum graphs.
They were first introduced by Duan, Severini and Winter in the context of quantum information theory where they allow generalizing confusability graphs of classical information channels to the quantum world \cite{duan_zero-error_2013}. At about the same time, Weaver proposed a more general framework of quantum relations \cite{weaver_quantum_2012}. Musto, Reutter and Verdon, finally, introduced the notion of a quantum adjacency matrix \cite{musto_compositional_2018}, and proved that the three ways to define quantum graphs are equivalent when $\psi$ is tracial and the quantum graph is undirected. Extending this, Daws proved the same equivalence in the more general situation when $\psi$ is not necessarily tracial \cite{daws_quantum_2024}. With the consistent use of the KMS-inner product, Wasilewski at last covered also the case of not necessarily undirected quantum graphs \cite{wasilewski_quantum_2024}. We call such quantum graphs \emph{directed}. Note that, therefore, undirected quantum graphs are a special case of directed quantum graphs.

The smallest non-commutative finite-dimensional C*-algebra is the complex matrix algebra $M_2$. In order to obtain a better understanding of quantum graphs it is therefore a natural starting point to try and describe all quantum graphs on quantum sets of the form $(M_2, \psi)$. This has been done for undirected quantum graphs by Gromada and Matsuda \cite{gromada_examples_2022, matsuda_classification_2022}.

The present paper is concerned with the complete classification of directed quantum graphs on $M_2$, i.e. we solve the following problem:
\begin{center}
    \textit{Describe all directed quantum graphs on $(M_2, \psi)$ up to isomorphism.}
\end{center}
Thus, we consider all faithful positive functionals $\psi$ on $M_2$ which turn $(M_2, \psi)$ into a quantum set. There is exactly one such functional $\psi = \tau$ that is tracial, and the nontracial admissible functionals $\psi_q$ are indexed by a parameter $q \in (0,1)$. For the purposes of classification, the quantum graphs on $(M_2, \psi_q)$ for different $q$ can be treated at the same time. Therefore, we mainly need to distinguish between the tracial case $(M_2, \tau)$ and the nontracial cases $(M_2, \psi_q)$.

Let us discuss the prior work of Gromada and Matsuda \cite{gromada_examples_2022,matsuda_classification_2022}. Independently of each other, they classified undirected quantum graphs on $(M_2, \psi)$ in different generality. While Matsuda considered reflexive/loopfree quantum graphs for both tracial and nontracial quantum sets $(M_2, \psi)$, Gromada classified non-reflexive quantum graphs on tracial $(M_2, \psi)$. They found that there are up to isomorphism only four reflexive/loopfree quantum graphs on $(M_2, \psi)$ for both tracial and nontracial quantum sets $(M_2, \psi)$. 
% These quantum graphs are $d$-regular for $d = 1, 2, 3, 4$ and 
Further, Matsuda showed that these quantum graphs are each quantum isomorphic to a classical graph on four vertices. For our classification, we use Gromada's approach, classifying quantum graphs on $(M_2, \psi)$ via their quantum edge space $S \subset M_2$.

% In this work, we generalize Gromada's approach to classify both tracial and nontracial quantum graphs on $(M_2, \psi)$ which are not necessarily undirected. Thus, we arrive at a \emph{complete} classification of quantum graphs on $M_2$. 

In what follows, we would like to highlight some special cases of our classification which might be particularly insightful. The difference between the classification of undirected and directed quantum graphs is best seen in the special case of tracial and loopfree quantum graphs on $(M_2, \tau)$, where $\tau$ is the unique tracial functional that turns $M_2$ into a quantum set. Whereas there is exactly one undirected quantum graph with exactly one quantum edge on $(M_2, \tau)$, the corresponding directed quantum graphs are parametrized by a parameter $\beta \in [0,1]$ with $\beta = 0$ giving the previously known undirected quantum graph.

\begin{thm}[Theorem \ref{t1e::thm:classification_of_1te_qgraphs}, Lemma \ref{pre::lemma:1-1-correspondence_between_qgraphs_with_k_and_n2-k(-1)_edges}]
    Let $\mathcal{G}$ be a loopfree quantum graph on the (tracial) quantum set $(M_2, \tau)$.
    \begin{enumerate}
        \item If $\mathcal{G}$ has exactly one quantum edge, then it is isomorphic to exactly one of the quantum graphs $\mathcal{G}_{0,\beta}^{(1B)}$ from Proposition \ref{t1e::prop:properties_of_G_alpha,beta^(1B)} with $\beta \in [0,1]$ given by the quantum adjacency matrix 
        \begin{align*}
            A_{0,\beta}^{(1B)}=
        \frac{1}{1+\beta^2}
        \begin{pmatrix}
            0 & 0 & 0 & (1+\beta)^2 \\
            0 & 0 & 1 - \beta^2 & 0 \\
            0 & 1-\beta^2 & 0 & 0 \\
            (1-\beta)^2 & 0 & 0 & 0
        \end{pmatrix}.
        \end{align*}
        The quantum graph $\mathcal{G}_{0,\beta}^{(1B)}$ is undirected if and only if $\beta = 0$, and has spectrum
        \begin{align*}
            \sigma_\beta =\frac{1}{1+\beta^2} \left\{1-\beta^2, 1-\beta^2, \beta^2-1, \beta^2-1\right\}.
        \end{align*}
        \item If $\mathcal{G}$ has two quantum edges, then it is isomorphic to the loopfree complement\footnote{Loopfree complements of quantum graphs are introduced in Definition \ref{pre::def:(looopfree)_complement_of_qgraphs}.} of one of the quantum graphs from (1).
        \item If $\mathcal{G}$ has three quantum edges, then it is the complete loopfree quantum graph on $(M_2, \tau)$.
    \end{enumerate}
\end{thm}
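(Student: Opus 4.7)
The plan is to leverage the preliminaries to reduce Cases (2) and (3) to short observations and then to carry out the substantive work in Case (1). Throughout, I identify a loopfree quantum graph on $(M_2, \tau)$ with its quantum edge space $S$, which by the loopfree condition lies inside the three-dimensional space $M_2^{(0)}$ of traceless matrices, with $\dim S$ equal to the number of quantum edges. Case (3) is then immediate: only $S = M_2^{(0)}$ is possible, giving the complete loopfree quantum graph. Case (2) follows from the loopfree complement lemma cited in the statement, since orthogonal complement inside $M_2^{(0)}$ provides a bijection between $2$-edge and $1$-edge loopfree quantum graphs.

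For Case (1), I write $S = \mathbb{C} X$ for a nonzero traceless $X \in M_2$, so that two $1$-edge quantum graphs are isomorphic if and only if their spanning matrices are related by $X' = c U X U^*$ for some $c \in \mathbb{C}^*$ and $U \in U(2)$. Expanding $X = x\sigma_x + y\sigma_y + z\sigma_z$ in the Pauli basis, the $U(2)$-action on $(x,y,z) \in \mathbb{C}^3$ factors through the double cover $\SU(2) \to \SO(3)$ and hence acts by real rotations, while the scaling acts as complex multiplication. Writing $(x,y,z) = \vec u + i\vec v$ with $\vec u, \vec v \in \mathbb{R}^3$, I apply a phase scaling to arrange $\vec u\cdot \vec v = 0$, an $\SO(3)$-rotation to place $\vec u$ along a nonnegative multiple of $e_1$ and $\vec v$ along a nonnegative multiple of $e_2$, and a positive scaling to fix $\|\vec u\|^2 + \|\vec v\|^2 = 1$. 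This yields the canonical form $X = a\sigma_x + i b\sigma_y$ with $a,b \geq 0$, $a^2 + b^2 = 1$. A further $\pi/2$-rotation in the $(e_1,e_2)$-plane combined with multiplication by $i$ exchanges $a$ and $b$, so after restricting to $a \geq b$ the canonical form becomes $X_\beta = (\sigma_x + i\beta\sigma_y)/\sqrt{1+\beta^2}$ for a unique $\beta = b/a \in [0,1]$. Uniqueness of $\beta$ is also visible from the invariant $\mathrm{tr}(X^2)/\mathrm{tr}(X^*X) = (1-\beta^2)/(1+\beta^2)$ (real and nonnegative after the phase normalization), which defines a monotone bijection $[0,1]\to[0,1]$.

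With this normal form, the quantum adjacency matrix is computed directly from $S = \mathbb{C} X_\beta$ using the formula established earlier in the paper, producing the displayed matrix $A_{0,\beta}^{(1B)}$. The spectrum follows by diagonalizing $A_{0,\beta}^{(1B)}$ in the $2 \oplus 2$ block decomposition corresponding to the invariant index sets $\{1,4\}$ and $\{2,3\}$; each block contributes the eigenvalues $\pm(1-\beta^2)/(1+\beta^2)$. Finally, undirectedness amounts to $S = S^*$, i.e., to $X_\beta$ being a scalar multiple of its adjoint; since $X_\beta^*$ has antidiagonal entries $1-\beta$ and $1+\beta$ in the swapped order, this occurs precisely when $\beta = 0$. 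The main obstacle is the normal-form step: identifying the $\beta \leftrightarrow 1/\beta$ symmetry (arising from the $\vec u \leftrightarrow \vec v$ exchange) and thereby narrowing the parameter range to $[0,1]$, together with establishing uniqueness inside that range. All remaining computations are direct.
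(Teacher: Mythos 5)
Your proposal is correct, and it reaches the same normal form $S=\C\,(\sigma_1+i\beta\sigma_2)$ as the paper, but by a genuinely different route for the key normalization and uniqueness step. The paper reduces, exactly as you do, to classifying lines in the traceless part of $M_2$ under the $\SO(3)$ Pauli action (Lemmas \ref{pau::lemma:Pauli_spaces_of_isomorphic_quantum_graphs} and \ref{t1e::lemma:classification_of_lines_V}), and handles cases (2) and (3) through Lemma \ref{pre::lemma:1-1-correspondence_between_qgraphs_with_k_and_n2-k(-1)_edges}; the computation of $A_{0,\beta}^{(1B)}$, its spectrum and the undirectedness criterion in Proposition \ref{t1e::prop:properties_of_G_alpha,beta^{(1B)}} likewise matches your Kronecker-product calculation. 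Where you diverge is in Section \ref{sec::classification_of_lines}: the paper first rotates the real part of $v=\vec u+i\vec v$ onto the $e_1$-axis and then classifies the resulting line in $\C^2$ via the M\"obius action of $\mathrm{O}(2)$ on the Riemann sphere (Lemma \ref{lin::lemma:lines_in_C2_via_Möbius_trafo}), whereas you first apply a phase to arrange $\vec u\perp\vec v$ and then rotate both onto coordinate axes, handling the residual $a\leftrightarrow b$ symmetry by an explicit $\pi/2$-rotation. For uniqueness the paper computes the full stabilizer of the normal form (Lemma \ref{lin::lemma:uniqueness_of_index_beta_for_lines_in_C3}), while you use the scalar invariant $|\Tr(X^2)|/\Tr(X^*X)=(1-\beta^2)/(1+\beta^2)$ — note that only the absolute value of $\Tr(X^2)/\Tr(X^*X)$ is invariant under rescaling by a non-real constant, which your parenthetical about the phase normalization implicitly acknowledges. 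Your route is shorter and more self-contained for this particular theorem; the paper's heavier machinery pays off later, since the explicit stabilizer data from Lemma \ref{lin::lemma:uniqueness_of_index_beta_for_lines_in_C3} is reused for the two-edge and nontracial classifications, which a one-number invariant cannot supply.
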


More generally, we classify all quantum graphs on $(M_2, \tau)$ with Theorems \ref{t1e::thm:classification_of_1te_qgraphs} and \ref{t2e::thm:classification_of_t2e_qgraphs}. Dropping the condition that $\mathcal{G}$ is loopfree introduces further parameters in the quantum adjacency matrix.

We would like to highlight that at the time of Matsuda's and Gromada's classification, the theory of directed quantum graphs was not fully settled. Crucially, in this generality the equivalence of the different definitions of quantum graphs was only proved by Wasilewski in \cite{wasilewski_quantum_2024}. There he also introduced a new definition for undirected quantum graphs on nontracial quantum sets, which amounts to taking the KMS-adjoint of the quantum adjacency matrix instead of the GNS-adjoint, see Section \ref{pre::subsec:properties_of_quantum_graphs} for a discussion. When $\psi$ is tracial these two concepts coincide. Generally, however, they are different, and this is best illustrated by the classification of nontracial loopfree quantum graphs on $M_2$. Again these directed quantum graphs are parametrized by one or more parameters.
%The nontracial quantum sets on $M_2$ are of the form $(M_2, \psi_q)$ for particular functionals $\psi_q$ with $q \in (0,1)$ corresponding to the Powers states.

\begin{thm}[Theorem \ref{ntg::thm:classification_nontracial_qgraphs_with_one_edge}, Lemma \ref{pre::lemma:1-1-correspondence_between_qgraphs_with_k_and_n2-k(-1)_edges}]
    Let $\mathcal{G}$ be a loopfree quantum graph on $(M_2, \psi_q)$.
    \begin{enumerate}
        \item If $\mathcal{G}$ has exactly one quantum edge, then it is isomorphic to exactly one of the following quantum graphs.
        \begin{enumerate}
            \item The quantum graph $\mathcal{G}_{0}^{(q,1B)}$ from Proposition \ref{ntg::prop:qgraph_G_alpha_on_(M2,psi_q)_with_one_edge} given by the adjacency matrix 
            \begin{align*}
                A_{0}^{(q,1B)} = \begin{pmatrix}
                    q^{-2} & 0 & 0 & 0 \\
                    0 & -1 & 0 & 0 \\
                    0 & 0 & -1 & 0 \\
                    0 & 0 & 0 & q^{2}
                \end{pmatrix}.
            \end{align*}
            This quantum graph is both GNS- and KMS-undirected.
            \item The quantum graph $\mathcal{G}_{0, \beta, \gamma}^{(q,1C)}$ from Proposition \ref{ntg::prop:qgraph_G_alpha,beta,gamma_with_one_edge} with $\beta \in [-1,1]$ and $\gamma \in \C$ such that  
            \begin{align*}
                |\beta| < 1 & \implies   \gamma \geq 0, \\
                |\beta| = 1 & \implies   \mathrm{arg}(\gamma) \in [0, \pi),
            \end{align*}
            given by the quantum adjacency matrix
            \begin{align*}
                A_{0, \beta, \gamma}^{(q,1C)} = c^{-1} \begin{pmatrix}
                    q^{-4} |\gamma|^2     & q^{-\frac{3}{2}} \beta_+ \gamma            & q^{-\frac{3}{2}}\beta_+ \overline{\gamma}              & q \beta_+^2            \\
                    q^{-\frac{5}{2}} \beta_- \gamma & - q^{-2} |\gamma|^2 & \beta_+ \beta_-                           & -q^{\frac{1}{2}}\beta_+ \overline{\gamma}   \\
                    q^{\frac{3}{2}}\beta_- \overline{\gamma} & \beta_+ \beta_-                          & - q^{-2} |\gamma|^2   & -q^{\frac{1}{2}}\beta_+ \gamma   \\
                    q^{-1}\beta_-^2         & - q^{-\frac{1}{2}}\beta_- \overline{\gamma}            & -q^{-\frac{1}{2}} \beta_- \gamma              & |\gamma|^2        
                \end{pmatrix},
            \end{align*}
            where $\beta_+ = 1+\beta$, $\beta_- = 1-\beta$ and
            \begin{align*}
                c = \frac{1}{1+q^2} \left( \left(q^{-2}+1\right)|\gamma|^2 + 2q\left(1+\beta^2\right)\right).
            \end{align*}
            This quantum graph is not GNS-undirected, and it is KMS-undirected if and only if $\beta = 0$ (and $\gamma \geq 0$).
        \end{enumerate}
        \item If $\mathcal{G}$ has two quantum edges, then it is isomorphic to the loopfree complement of one of the quantum graphs from (1).
        \item If $\mathcal{G}$ has three quantum edges, then it is the complete loopfree quantum graph on $(M_2, \psi_q)$.
    \end{enumerate}
\end{thm}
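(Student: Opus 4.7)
The plan is to mirror the strategy of the tracial case (Theorem~\ref{t1e::thm:classification_of_1te_qgraphs}), adapted to the nontracial functional $\psi_q$ via Wasilewski's framework using the KMS inner product. Following Gromada's approach, loopfree quantum graphs with exactly one quantum edge on $(M_2, \psi_q)$ correspond to rank-one projections $P \in M_2 \otimes M_2^{\op}$ satisfying a $\psi_q$-weighted loopfree orthogonality condition; the classification then proceeds by modding out the induced action of $\psi_q$-preserving $\ast$-automorphisms of $M_2$.

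First, I would parametrize rank-one projections $P$ by unit vectors $v \in \C^4$ (modulo a global phase), and translate the loopfree condition into a single linear equation in the components of $v$ with $q$-dependent coefficients. The key qualitative difference from the tracial case is the isomorphism group: whereas $(M_2, \tau)$ admits the full $PU(2)$-action, only a $U(1)$ of conjugations by diagonal unitaries preserves $\psi_q$ for $q \in (0,1)$, so more orbits survive. After fixing canonical representatives one finds exactly two types: case (1a) corresponds to vectors $v$ supported on a single anti-diagonal component, yielding the diagonal matrix $A_0^{(q,1B)}$; case (1b) corresponds to generic $v$, yielding $A_{0,\beta,\gamma}^{(q,1C)}$ with parameters $\beta \in [-1,1]$ and $\gamma \in \C$. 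The normalization $c$ arises from $\Tr(P) = 1$, and the listed positivity/phase conditions on $(\beta, \gamma)$ exactly fix the residual $U(1)$-ambiguity.

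GNS- and KMS-undirectedness of each graph are then read off by direct calculation from the explicit adjacency matrices using the formulas from Section~\ref{pre::subsec:properties_of_quantum_graphs}. The diagonal matrix $A_0^{(q,1B)}$ is manifestly invariant under both adjoints, so (1a) is both GNS- and KMS-undirected. In (1b) the off-diagonal entries carry asymmetric powers of $q$ and $q^{-1}$, so GNS-undirectedness fails for any nondiagonal choice; KMS-undirectedness (which symmetrically weights by $q^{\pm 1/2}$) reduces to the conditions $\beta = 0$ and $\gamma \in \R_{\geq 0}$. Parts (2) and (3) then follow immediately: (2) by applying Lemma~\ref{pre::lemma:1-1-correspondence_between_qgraphs_with_k_and_n2-k(-1)_edges} to the list in (1), and (3) because a loopfree quantum graph on $(M_2, \psi_q)$ has at most $\dim(M_2) - 1 = 3$ quantum edges, a bound attained only by the complete loopfree quantum graph.

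The main obstacle I expect is the orbit analysis in the second step. The $U(1)$-stabilizer changes behavior as $|\beta|$ crosses the boundary value $1$, which forces a separate treatment of $|\beta| = 1$ versus $|\beta| < 1$ in order to pin down the correct domain for $\gamma$. On top of that, one must verify that distinct points in the stated parameter region give genuinely non-isomorphic quantum graphs, which I would do by extracting an isomorphism invariant directly from the entries of $A$, for instance the spectrum or combinations of off-diagonal entries that recover $|\beta|$ and $|\gamma|$.
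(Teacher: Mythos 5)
Your strategy is in substance the paper's own: a quantum graph with one quantum edge is a line in $M_2\cong\C^4$ (whether you encode it as a rank-one edge projection or, as the paper does, as the $q$-adjusted Pauli space of the edge space $S$), the loopfree condition is one linear equation, and the classification is the orbit analysis under the residual automorphism group $\SO(2)\cong U(1)$ of $(M_2,\psi_q)$. Your dichotomy (1a)/(1b) is exactly the paper's split between lines fixed by the $\SO(2)$-rotation and lines meeting the rotated plane nontrivially; only your description of case (1a) is off --- the edge space there is $\spann\{\mathrm{diag}(q^{-2},-1)\}$, the $\sigma_3^{(q)}$-direction, i.e.\ a \emph{diagonal} matrix, not an anti-diagonal one. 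The directedness analysis (diagonal edge space commutes with $\rho_q$, hence GNS-undirected; in case (1b) conjugation by $\rho_q$ rescales the off-diagonal entries asymmetrically, so GNS-undirectedness always fails while KMS-undirectedness reduces to $S=S^\ast$) and parts (2) and (3) match the paper's Lemma \ref{pre::lemma:1-1-correspondence_between_qgraphs_with_k_and_n2-k(-1)_edges} and Remark \ref{iso::remark:complete_classification}.

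The step that would not close as written is the mutual non-isomorphism of the graphs $\mathcal{G}^{(q,1C)}_{0,\beta,\gamma}$. Invariants recovering only $|\beta|$ and $|\gamma|$ cannot suffice: the sign of $\beta\in[-1,1]$ is an isomorphism invariant, and for $|\beta|<1$ the residual stabilizer is only $\{\pm1\}$, so $\gamma$ ranges over a closed half-plane and its argument must be recovered as well. The paper avoids invariants entirely by computing the exact stabilizer of the canonical vector $(0,1,i\beta,\gamma)^t$ under $v\mapsto\lambda(1\oplus R\oplus 1)v$ (Lemma \ref{lin::lemma:uniqueness_of_lambda_in_C2}): it is all of $S^1$ when $|\beta|=1$ and $\{\pm1\}$ when $|\beta|<1$, which yields existence of the normal form and uniqueness of $(\beta,\gamma)$ simultaneously. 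Your remark that the phase conditions ``exactly fix the residual $U(1)$-ambiguity'' is precisely this computation, so you should carry it out rather than fall back on spectral data. Note also that the correct normalization, dictated by these stabilizers and recorded in Lemma \ref{ntg::lemma:Pauli_spaces_for_qgraphs_with_one_edge_on_M2,psi_q}, is $\gamma\ge0$ when $|\beta|=1$ and $\arg(\gamma)\in[0,\pi)$ when $|\beta|<1$; the displayed theorem states these two cases the other way around, and with the conditions as displayed the list would be simultaneously incomplete and redundant.
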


The quantum graph $\mathcal{G}_0^{(q,1B)}$ has been described by Matsuda, while the quantum graphs $\mathcal{G}_{0, 0, \gamma}^{(q,1)}$ with $\gamma \geq 0$ yield a multitude of KMS-undirected but not GNS-undirected quantum graphs on $(M_2, \psi_q)$. More generally, we classify all quantum graphs on $(M_2, \psi_q)$ with Theorems \ref{ntg::thm:classification_nontracial_qgraphs_with_one_edge} and \ref{ntg::thm:classification_of_nontracial_qgraphs_with_two_edges}.

\subsection{Outline} 

The rest of this paper is organized as follows: In \textbf{Section \ref{sec::preliminaries}} we recall key concepts such as quantum sets and quantum graphs based mainly on \cite{wasilewski_quantum_2024}. In \textbf{Section \ref{sec::qgraph_isomorphism}} we discuss isomorphisms of quantum graphs. Then, \textbf{Section \ref{sec::classification_of_lines}} contains the main technical preparation for the later work: a classification of lines in $\C^2$ and $\C^3$ up to certain orthogonal transformations. With that in hand, we classify tracial quantum graphs on $M_2$ in \textbf{Sections \ref{sec::t1e}} and \textbf{\ref{sec::t2e}} -- first the quantum graphs with one quantum edge and then the non-loopfree quantum graphs with two quantum edges. Due to Lemma \ref{iso::remark:complete_classification}, this is sufficient to obtain the complete classification for tracial quantum graphs. In \textbf{Sections \ref{sec::n1e}} and \textbf{\ref{sec::n2e}} we repeat these arguments for the more general case of nontracial quantum graphs. Lastly, in \textbf{Section \ref{sec::cc}} we summarize the complete classification and provide a tabular overview.

\subsection{Acknowledgments}

This work originated during the Workshop on Quantum Graphs held at Saarland University in February 2025. The authors would like to thank Daniel Gromada and Luca Junk for organizing this event. The authors are also grateful to their supervisor, Moritz Weber, for his invaluable guidance and support throughout the research process.
Additionally, this work is a contribution to the SFB-TRR 195.
The present article \lq Complete Classification of Directed Quantum Graphs on $M_2$\rq\ is part of the PhD theses of Nina Kiefer and Björn Schäfer.

We would also like to thank Daniel Gromada, Junichiro Matsuda and Mateusz Wasilewski for comments on an earlier version of this article.

\section{Preliminaries}
\label{sec::preliminaries}

\subsection{Notational Conventions}
%-------------------------------------------------------------------------------------------------

Throughout this paper we denote the matrix algebra of $n \times n$ complex matrices by $M_n$ and we write $e_{ij}$ with $i,j = 1, \dots, n$ for its standard generators. The unit matrix of dimension $n\in\N$ is denoted by $\mathrm{I}_n$. Further, we write $\mathrm{Tr}$ for the unnormalized trace on $M_n$ where the dimension $n$ is clear from the context. We denote by $\mathrm{O}(n)$ ($\mathrm{SO}(n)$) the group of (special) orthogonal matrices on $M_n$. The $C^\ast$-algebra of bounded operators on a Hilbert space $\mathcal{H}$ is denoted $B(\mathcal{H})$.

\subsection{Quantum Sets}
\label{sec::pre:quantum_sets}
%-------------------------------------------------------------------------------------------------

Let us consider a pair $(B, \psi)$ of a finite-dimensional $C^\ast$-algebra $B$ and a faithful positive functional $\psi$ on $B$. This comes equipped with the \emph{multiplication} map $m: B \otimes B \to B$, $(x,y) \mapsto xy$ and the GNS-inner product $\langle x, y \rangle = \psi(x^\ast y)$ for $x,y \in B$. The latter gives $B$ the structure of a Hilbert space which we denote $L^2(B)$ and which is called the GNS-Hilbert space. With respect to this additional structure, one considers the adjoint $m^\ast: B \to B \otimes B$ of $m$ which is determined by $\langle m^\ast(a), b \otimes c\rangle_{L^2(B) \otimes L^2(B)} = \langle a, bc \rangle_{L^2(B)}$ for all $a, b, c \in B$. This map is called the \emph{comultiplication} on $(B, \psi)$. We prefer to view both $m$ and $m^\ast$ as linear maps between $C^\ast$-algebras and not as bounded operators between Hilbert spaces. Further, the map $\varepsilon: \C \to B$, $\lambda \mapsto \lambda 1_B$ is called the \emph{unit} of $(B, \psi)$ and $\psi: B \to \C$ is called the \emph{counit}.

We call the pair $(B, \psi)$ a \emph{quantum set} if $m m^\ast = \mathrm{Id}_B$. This terminology goes back to \cite[Terminology 3.1]{musto_compositional_2018}, where it is additionally required that the functional $\psi$ is tracial. We do without this latter condition, and call the quantum set $(B, \psi)$ tracial if $\psi$ is tracial. Alternatively to $m m^\ast = \mathrm{Id}_B$ one can ask that $\psi$ is a $\delta$-form, i.e. $\psi(1) = 1$ and $m m^\ast = \delta^2 \mathrm{Id}_B$ for $\delta \geq 0$. This approach is also common in the literature on quantum graphs; see e.g. \cite{daws_quantum_2024,matsuda_classification_2022,gromada_examples_2022,brannan_bigalois_2020,brannan_quantum_2022}. However, we follow \cite{daws_quantum_2024,wasilewski_quantum_2024} in asking $m m^\ast = \mathrm{Id}_B$. If $B = M_n$, then the difference is only a scalar factor as the following proposition shows.

\begin{proposition}[{\cite[Proposition 2.3]{wasilewski_quantum_2024}}]
    \label{pre::prop:delta_form_versus_mm*=Id}
    Let $\psi = \alpha \mathrm{Tr}(\rho \, \cdot)$ with $\rho \in M_n$ and $\alpha \in \C$ be a faithful positive functional on $M_n$. Then
    \begin{align*}
        m^\ast(e_{ij}\rho^{-1}) = \frac{1}{\alpha} \sum_k e_{ik}\rho^{-1} \otimes e_{kj}\rho^{-1}
        \quad \text{ and } \quad
        m m^\ast = \frac{\mathrm{Tr}(\rho^{-1})}{\alpha} \mathrm{Id}_n.
    \end{align*}
    In particular, $\psi$ is a $\delta$-form if and only if we have $m m^\ast = \mathrm{Id}_n$ with respect to the faithful positive functional
    \begin{align*}
        \psi^\prime := \delta^2 \psi.
    \end{align*}
\end{proposition}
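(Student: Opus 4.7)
The plan is to verify the formula for $m^\ast(e_{ij}\rho^{-1})$ directly from its defining property $\langle m^\ast(a), b \otimes c\rangle = \langle a, bc\rangle$, then feed this into $m$ to compute $mm^\ast$.

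First, I would observe that since $\rho$ is invertible, $\{e_{ij}\rho^{-1}\}_{i,j=1}^n$ is a basis of $M_n$. The advantage of this basis is that it pairs nicely with the standard matrix units under the GNS-inner product: a short computation using $\psi(x) = \alpha \mathrm{Tr}(\rho x)$ and $(e_{ij}\rho^{-1})^\ast = \rho^{-1} e_{ji}$ gives
\begin{align*}
    \langle e_{ij}\rho^{-1}, e_{kl} \rangle
    = \alpha \mathrm{Tr}(\rho \rho^{-1} e_{ji} e_{kl})
    = \alpha\, \delta_{ik}\delta_{jl}.
\end{align*}
In particular $\{e_{ij}\}$ and $\{\tfrac{1}{\alpha} e_{ij}\rho^{-1}\}$ are biorthogonal, so to identify $m^\ast(e_{ij}\rho^{-1})$ it suffices to match its $L^2$-inner products against all $e_{kl} \otimes e_{pq}$ with those of the claimed right-hand side. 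Plugging the proposed formula into the inner product and using the pairing above, both sides reduce to $\alpha\, \delta_{ik}\delta_{jq}\delta_{lp}$, which establishes the first identity.

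Next, I would apply $m$ to $m^\ast(e_{ij}\rho^{-1})$. The crucial elementary fact is that for any matrix $X$ one has $e_{ik} X e_{kj} = X_{kk}\, e_{ij}$, which I would verify by a one-line index computation. Taking $X = \rho^{-1}$, this gives $e_{ik}\rho^{-1}e_{kj} = (\rho^{-1})_{kk} e_{ij}$, so
\begin{align*}
    mm^\ast(e_{ij}\rho^{-1})
    = \frac{1}{\alpha}\sum_k (\rho^{-1})_{kk}\, e_{ij}\rho^{-1}
    = \frac{\mathrm{Tr}(\rho^{-1})}{\alpha}\, e_{ij}\rho^{-1}.
\end{align*}
Since $\{e_{ij}\rho^{-1}\}$ spans $M_n$, this forces $mm^\ast = \frac{\mathrm{Tr}(\rho^{-1})}{\alpha}\mathrm{Id}_n$.

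For the ``in particular'' statement, I would apply the formula just proved to the rescaled functional $\psi' = \delta^2 \psi = (\delta^2\alpha)\mathrm{Tr}(\rho\,\cdot)$: substituting $\alpha' = \delta^2 \alpha$ yields $(mm^\ast)_{\psi'} = \frac{\mathrm{Tr}(\rho^{-1})}{\delta^2\alpha}\mathrm{Id}_n$, which equals $\mathrm{Id}_n$ precisely when $\mathrm{Tr}(\rho^{-1})/\alpha = \delta^2$, i.e.\ exactly when $(mm^\ast)_\psi = \delta^2\mathrm{Id}_n$. No real obstacle is expected here; the only mildly delicate point is the bookkeeping with the dual-basis pairing and the Kronecker deltas in the very first step, so I would carry out that calculation carefully and leave the rest as direct substitution.
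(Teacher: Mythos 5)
Your proof is correct. Note that the paper does not actually prove this proposition: it cites the first identity from Wasilewski's paper and declares that the rest "follows immediately," so your computation is a genuinely self-contained alternative. Your route is the natural one: the biorthogonality $\langle e_{ij}\rho^{-1}, e_{kl}\rangle = \alpha\,\delta_{ik}\delta_{jl}$ is exactly the right lemma, both sides of the defining relation for $m^\ast$ do reduce to $\alpha\,\delta_{ik}\delta_{jq}\delta_{lp}$, and the identity $e_{ik}\rho^{-1}e_{kj} = (\rho^{-1})_{kk}e_{ij}$ then gives $mm^\ast = \frac{\mathrm{Tr}(\rho^{-1})}{\alpha}\mathrm{Id}_n$ on the basis $\{e_{ij}\rho^{-1}\}$. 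The only caveats, neither of which is a real gap: you implicitly use $\rho^\ast=\rho$ when writing $(e_{ij}\rho^{-1})^\ast = \rho^{-1}e_{ji}$ (harmless, since $\rho$ is taken positive definite), and in the final "in particular" step your equivalence addresses only the scaling condition $mm^\ast = \delta^2\mathrm{Id}_n$ and not the normalization $\psi(1)=1$ that is also part of the paper's definition of a $\delta$-form — but the proposition as stated has the same looseness, so you are proving exactly what is claimed.
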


\begin{proof}
    The first statement is part of \cite[Proposition 2.3]{wasilewski_quantum_2024} and the second statement follows immediately.
\end{proof}

% In this work, we will adopt the requirement that $m m^\ast = \mathrm{Id}_B$. 
In this work, we are only interested in quantum sets on $M_2$, i.e. quantum sets of the form $(M_2, \psi)$ for some faithful positive functional $\psi$ with $m m^\ast = \mathrm{Id}_2$. These functionals are classified as follows.

\begin{proposition}
    \label{pre::prop:qsets_on_M2}
    Any faithful positive functional $\psi: M_2 \to \C$ with $mm^\ast = \mathrm{Id}$ is unitarily equivalent to exactly one of the functionals $\psi_q$ with $q \in (0,1]$ given by
    \begin{align*}
        \psi_q: M_2 \to \C, \; X = \sum_{i,j=1}^2 x_{ij} e_{ij} \mapsto  \mathrm{Tr}(\rho_q X) = (1+q^2)(q^{-2} x_{11} + x_{22}),
    \end{align*}
    where 
    $$
        \rho_q = (1+q^2) \mathrm{diag}(q^{-2}, 1) = \begin{pmatrix}
            \frac{1+q^2}{q^2} & 0 \\
            0 & 1+q^2
        \end{pmatrix}.
    $$
    We write $\tau := \psi_1$ and note $\tau = 2 \mathrm{Tr}$. 
\end{proposition}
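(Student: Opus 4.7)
The plan is to combine the standard parametrization of faithful positive functionals on $M_2$ by density matrices with the explicit condition for $mm^\ast = \mathrm{Id}$ supplied by Proposition \ref{pre::prop:delta_form_versus_mm*=Id}, and then reduce to a canonical form by unitary conjugation.

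First I would recall that every faithful positive functional $\psi$ on $M_2$ has the form $\psi(X) = \mathrm{Tr}(\rho X)$ for a unique positive invertible density matrix $\rho \in M_2$. Applying Proposition \ref{pre::prop:delta_form_versus_mm*=Id} with $\alpha = 1$ gives $mm^\ast = \mathrm{Tr}(\rho^{-1})\,\mathrm{Id}_2$, so the condition $mm^\ast = \mathrm{Id}_2$ translates into the single scalar constraint $\mathrm{Tr}(\rho^{-1}) = 1$. Note that two functionals $\psi(X) = \mathrm{Tr}(\rho X)$ and $\psi'(X) = \mathrm{Tr}(\rho' X)$ are unitarily equivalent (in the sense $\psi'(X) = \psi(UXU^\ast)$ for some unitary $U$) if and only if $\rho' = U^\ast\rho U$, so the classification problem becomes a classification of positive invertible $\rho$ with $\mathrm{Tr}(\rho^{-1})=1$ up to unitary conjugation.

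Since $\rho$ is self-adjoint and positive, the spectral theorem provides a unitary $U$ with $U\rho U^\ast$ diagonal; conjugating further by the swap unitary $\bigl(\begin{smallmatrix}0&1\\1&0\end{smallmatrix}\bigr)$ if necessary, we may assume $\rho = \mathrm{diag}(a,b)$ with $a \geq b > 0$. The constraint $\mathrm{Tr}(\rho^{-1}) = 1$ reads $\tfrac{1}{a}+\tfrac{1}{b} = 1$. From $a \geq b$ we get $\tfrac{1}{b} \geq \tfrac{1}{2}$, hence $b \leq 2$, and from $\tfrac{1}{a}>0$ we get $b > 1$. Thus $b$ ranges over $(1,2]$, and I would write $b = 1+q^2$ for a unique $q \in (0,1]$; solving for $a$ gives $a = (1+q^2)/q^2$, so $\rho = \rho_q$ and $\psi$ is unitarily equivalent to $\psi_q$.

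For uniqueness, the multiset of eigenvalues of $\rho$ is a unitary invariant, and the map $q \mapsto \{(1+q^2)/q^2,\, 1+q^2\}$ is injective on $(0,1]$ (the smaller eigenvalue $1+q^2$ is strictly increasing), so distinct values of $q$ yield non-equivalent functionals. The identity $\tau = \psi_1 = 2\mathrm{Tr}$ is immediate by substitution. I do not anticipate a real obstacle here; the only slightly delicate point is verifying the exact interval $(1,2]$ for $b$ so that the parametrization $b=1+q^2$ with $q\in(0,1]$ is both surjective and injective.
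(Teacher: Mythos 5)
Your proof is correct, and it takes a more self-contained route than the paper. The paper's proof leans on two external facts: the classification of faithful states on $M_2$ up to unitary equivalence by the Powers states $\omega_q$, and the computation (cited from Matsuda) that $\omega_q$ is a $\delta$-form with $\delta = q + q^{-1}$; it then rescales $\omega_q$ by $\delta^2$ via Proposition \ref{pre::prop:delta_form_versus_mm*=Id} to land on $\psi_q$. You instead impose the condition $mm^\ast = \mathrm{Id}$ from the outset as the scalar constraint $\mathrm{Tr}(\rho^{-1}) = 1$ (again via Proposition \ref{pre::prop:delta_form_versus_mm*=Id}, with $\alpha = 1$), diagonalize $\rho$ and order the eigenvalues by a further unitary, and solve $\tfrac{1}{a} + \tfrac{1}{b} = 1$ with $a \geq b$ to get $b \in (1,2]$, which the substitution $b = 1+q^2$, $q \in (0,1]$, parametrizes bijectively; your uniqueness argument via the eigenvalue multiset is also clean and is more explicit than the paper's (which implicitly inherits uniqueness from the Powers-state classification). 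The trade-off: the paper's version is shorter because it outsources the work to the literature, while yours rederives the parametrization from scratch and makes the interval bookkeeping and the injectivity of $q \mapsto \{(1+q^2)/q^2,\, 1+q^2\}$ explicit. All your computations check out (in particular $a = (1+q^2)/q^2$ follows from $\tfrac{1}{a} = 1 - \tfrac{1}{1+q^2}$, and $\rho_1 = 2\,\mathrm{I}_2$ gives $\tau = 2\,\mathrm{Tr}$); the only cosmetic slip is calling $\rho$ a ``density matrix,'' since it is not trace-normalized here.
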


\begin{proof}
    It is well known (see e.g. \cite{matsuda_classification_2022,soltan_quantum_2010}) that every faithful state on $M_2$ is unitarily equivalent to one of the Powers states
    \begin{align*}
        &\omega_q: M_2 \to \C, X = \sum_{i,j=1}^2 x_{ij} e_{ij} \mapsto \frac{1}{1+q^2}(x_{11} + q^2 x_{22}). %=  \mathrm{Tr}(\rho^\prime_q X),
    \end{align*}
    where $q \in (0,1]$. %and $\rho^\prime_q = \frac{1}{1+q^2} \mathrm{diag}(1, q^2)$. 
    These states are $\delta$-forms with $\delta = q + q^{-1}$, see e.g. \cite[Lemma 3.4]{matsuda_classification_2022}.
    Thus, every faithful positive functional $\psi$ on $M_2$ with $m m^\ast = \mathrm{Id}_2$ is up to unitary equivalence of the form $(q+q^{-1})^2 \omega_q$ for some $q \in (0,1]$. The statement follows by observing
    \begin{align*}
         \frac{(q+q^{-1})^2}{1+q^2}(x_{11} + q^2 x_{22})
         = \frac{(q^{-1}(1+q^2))^2}{1+q^2}(x_{11} + q^2 x_{22})
         = (1+q^2)(q^{-2} x_{11} + x_{22}).
    \end{align*}
\end{proof}

For $q = 1$ one obtains the tracial quantum space $(M_2, \tau)$, while for $q < 1$ the functional $\psi_q$ is not tracial. This has a significant impact on the automorphism group of $(M_2, \psi_q)$ which we collect in the next proposition. The automorphism group of a quantum set $(B, \psi)$ consists of all $\ast$-automorphisms $\pi$ of the $C^\ast$-algebra $B$ which preserve the functional $\psi$ in the sense that $\psi \circ \pi = \psi$. In Definition \ref{pre::def:isomorphism_of_qgraphs} we will more generally introduce isomorphisms between quantum graphs.

Recall the Pauli matrices
\begin{align*}
    \sigma_0=\mathrm{I}_2, \quad  %\begin{pmatrix}1&0\\0&1\end{pmatrix},
    \sigma_1=\begin{pmatrix}0&1\\1&0\end{pmatrix}, \quad 
    \sigma_2=\begin{pmatrix}0&-i\\i&0\end{pmatrix} \quad \text{ and } \quad 
    \sigma_3=\begin{pmatrix}1&0\\0&-1\end{pmatrix}.
\end{align*}

\begin{proposition}
\label{pre::prop:aut_group_of_(M2,psi)}    
    The automorphism group of $(M_2, \psi_q)$ is up to isomorphism
    \begin{itemize}
        \item $\SO(3)$, the special orthogonal group in three dimensions if $q=1$ with group action given by
        \begin{align*}
            R \cdot (x_0 \sigma_0 + x_1 \sigma_1 + x_2 \sigma_2 + x_3 \sigma_3) = x_0 \sigma_0 + x_1^\prime \sigma_1 + x_2^\prime \sigma_2 + x_3^\prime \sigma_3
        \end{align*}
        where $\sigma_0, \sigma_1, \sigma_2, \sigma_3$ are the Pauli matrices, $R \in \SO(3)$, $x_0, x_1, x_2, x_3 \in \C$, $x_1^\prime, x_2^\prime, x_3^\prime \in \C$ and 
        $$
        \begin{pmatrix}
            x_1^\prime \\ x_2^\prime \\ x_3^\prime
        \end{pmatrix}
        = R \begin{pmatrix}
            x_1 \\ x_2 \\ x_3
        \end{pmatrix},
        $$
        \item $\SO(2)$, the special orthogonal group in two dimensions if $q\in(0,1)$ with group action given by
        \begin{align*}
            R \cdot (x_0 \sigma_0 + x_1 \sigma_1 + x_2 \sigma_2 + x_3 \sigma_3) = x_0 \sigma_0 + x_1^\prime \sigma_1 + x_2^\prime \sigma_2 + x_3 \sigma_3
        \end{align*}
        where $R \in \SO(2)$, $x_0, x_1, x_2, x_3, x_1^\prime, x_2^\prime \in \C$ and 
        \begin{align*}
            \begin{pmatrix}
                x_1^\prime \\ x_2^\prime
            \end{pmatrix}
            = R
            \begin{pmatrix}
                x_1 \\ x_2
            \end{pmatrix}.
        \end{align*}
    \end{itemize}
\end{proposition}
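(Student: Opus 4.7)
The plan is to determine $\mathrm{Aut}(M_2,\psi_q)$ by combining Skolem--Noether with a direct computation in the Pauli basis. Every $*$-automorphism of $M_2$ is inner, so any $\pi \in \mathrm{Aut}(M_2,\psi_q)$ has the form $\pi_U(X) = UXU^*$ for some $U \in U(2)$, determined up to a scalar phase in the center $U(1)\cdot \mathrm{I}_2$. Using cyclicity of the trace, the preservation condition $\psi_q \circ \pi_U = \psi_q$ is equivalent to $\Tr\bigl((U^*\rho_q U - \rho_q)X\bigr)=0$ for all $X \in M_2$, i.e.\ to $U$ lying in the commutant of $\rho_q$. Hence $\mathrm{Aut}(M_2,\psi_q)$ is the commutant of $\rho_q$ inside $U(2)$, quotiented by $U(1)\cdot \mathrm{I}_2$.

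For $q=1$, the matrix $\rho_1 = 2\mathrm{I}_2$ is central, so every unitary commutes with it and the automorphism group equals $U(2)/U(1)\cdot\mathrm{I}_2 \cong \SU(2)/\{\pm \mathrm{I}_2\}$. The classical double cover $\SU(2) \to \SO(3)$ realises this quotient as $\SO(3)$. Under the Pauli decomposition $X = \sum_{i=0}^3 x_i \sigma_i$, conjugation by $V \in \SU(2)$ fixes the central generator $\sigma_0$ and acts on the real span $\mathrm{span}_\R\{\sigma_1,\sigma_2,\sigma_3\}$ as the corresponding $R \in \SO(3)$; extending by $\C$-linearity gives the stated action on the complex coefficients $x_1,x_2,x_3$.

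For $q \in (0,1)$, $\rho_q$ has two distinct eigenvalues, so its commutant in $M_2$ is the algebra of diagonal matrices. Up to a central phase one may therefore write $U_\theta := \diag(e^{i\theta/2},e^{-i\theta/2})$ with $\theta \in \R/2\pi\Z \cong \SO(2)$. A direct matrix computation yields
\begin{align*}
U_\theta\sigma_0 U_\theta^* &= \sigma_0, & U_\theta\sigma_3 U_\theta^* &= \sigma_3, \\
U_\theta\sigma_1 U_\theta^* &= \cos\theta\,\sigma_1 - \sin\theta\,\sigma_2, & U_\theta\sigma_2 U_\theta^* &= \sin\theta\,\sigma_1 + \cos\theta\,\sigma_2,
\end{align*}
which is exactly a planar rotation of the $(x_1,x_2)$-coefficients, matching the stated $\SO(2)$-action.

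Neither half of the argument involves a serious obstacle: the $q=1$ case reduces to the textbook identification $\SU(2)/\{\pm\mathrm{I}_2\} \cong \SO(3)$, and the $q<1$ case is an elementary calculation with diagonal unitaries. The only bookkeeping one must watch is orientation, namely that the assignment $V \mapsto R$ (respectively $\theta \mapsto R(\theta)$) really lands in $\SO(n)$ rather than $\mathrm{O}(n)$ and acts on the column vector of Pauli coefficients with the sign convention fixed in the proposition.
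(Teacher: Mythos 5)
Your proposal is correct and follows essentially the same route as the paper: both write every automorphism as conjugation by a unitary, observe that $\psi_q$-preservation forces $U$ to commute with $\rho_q$ (hence to be diagonal when $q<1$), and then read off the action on the Pauli basis, with the $q=1$ case reduced to the standard identification of $\mathrm{Inn}(M_2)$ with $\SO(3)$. The only cosmetic difference is that you phrase the $q<1$ constraint via the commutant of $\rho_q$ while the paper argues via preservation of the spectral projections $e_{11},e_{22}$; these are the same computation.
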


\begin{proof}
    For the first statement, note that every isomorphism preserves the trace. Therefore, the automorphism group of $(M_2, \tau)$ is nothing else but the automorphism group of $M_2$. It is well known that this group is $\SO(3)$ with the action from above, see e.g. \cite{soltan_quantum_2010}.
    
    For the second claim let $q \in (0,1)$.
    Since $\psi_q(e_{11}) \neq \psi_q(e_{22})$, any automorphism $\varphi: M_2 \to M_2$ of $(M_2, \psi_q)$ must preserve the two projections $e_{11}$ and $e_{22}$. Writing $\varphi$ as conjugation $U \cdot U^\ast$ for a unitary matrix $U \in \mathrm{U}(2)$, it follows that $U$ is diagonal. By multiplying with a suitable scalar if necessary, we may assume 
    \begin{align*}
        U = \mathrm{diag}(\lambda, 1)
    \end{align*}
    for some $\lambda = a + ib \in S^1$. Then one checks 
    \begin{align*}
        U \sigma_i U^\ast = \sigma_i
    \end{align*}
    for $i=0,3$, as well as
    \begin{align*}
        U \sigma_1 U^\ast = \begin{pmatrix}
            0 & \overline{\lambda} \\
            \lambda & 0
        \end{pmatrix}
        = a \sigma_1 + b \sigma_2
        \quad \text{ and } \quad
        U \sigma_2 U^\ast = \begin{pmatrix}
            0 & -i \overline{\lambda} \\
            i \lambda & 0
        \end{pmatrix}
        = - b \sigma_1 + a \sigma_2.
    \end{align*}
    %Hence, 
    %\begin{align*}
    %    U \left(\sum_{i=0}^3 x_i \sigma_i \right) U^\ast = x_0 \sigma_0 + x_1^\prime \sigma_1 + x_2^\prime \sigma_2 + x_3 %\sigma_3
    %\end{align*}
    %where 
    %\begin{align*}
    %    \begin{pmatrix}
    %        x_1^\prime \\ x_2^\prime
    %    \end{pmatrix}
    %    = \begin{pmatrix}
    %        a & b \\
    %        -b & a
    %    \end{pmatrix}
    %    \begin{pmatrix}
    %        x_1 \\ x_2
    %    \end{pmatrix}.
    %\end{align*}
    %The claim follows since $S^1 \cong \SO(2)$.
\end{proof}

\subsection{Quantum Graphs}
%------------------------------------------------------------------------------------

The main objects of this paper are quantum graphs. They can be defined in three different ways, and we  present these different definitions without giving preference to one. To begin with, we discuss general quantum graphs on quantum sets $(B, \psi)$, but later we focus on quantum graphs on a matrix algebra $M_n$.

\begin{definition}
    \label{pre::def:qgraph}\label{def:quantumGraph}  
    A \emph{quantum graph} $\mathcal{G}$ on a quantum set $(B, \psi)$ is given by either of the following:
    \begin{enumerate}
        \item given an embedding $B \subset B(\mathcal{H})$, a $B^\prime$-$B^\prime$-bimodule\footnote{We denote with $B^\prime$ the commutant of $B$ in $B(\mathcal{H})$. Then $S \subset B(\mathcal{H})$ is a $B^\prime$-$B^\prime$-bimodule if and only if, it is an operator space that satisfies $B^\prime S = \{b^\prime x: b^\prime \in B^\prime, x \in S\} \subset S$, and similarly $S B^\prime \subset S$.}
         $S \subset B(\mathcal{H})$ which we call a \emph{quantum edge space},
        \item a linear map $A \cc B \to B$ such that $A(x^\ast) = A(x)^\ast$ holds for all $x \in B$ and 
        \begin{align*}
            m(A\otimes A)m^*=A,
        \end{align*}
        which we call a \emph{quantum adjacency matrix}\footnote{In the presence of $m(A \otimes A)m^\ast = A$ the first condition $A(x^\ast)=A(x)^\ast$ is equivalent to $A$ being completely positive, see \cite{matsuda_classification_2022}.},
        \item a projection $P \in B \otimes B^{\mathrm{op}}$ 
        which we call a \emph{quantum edge projection}\footnote{$B^{\mathrm{op}}$ is the opposite algebra of $B$, 
        meaning that it is a copy of the set $B$ together with the natural vector space structure and multiplication given by $a^{\mathrm{op}} b^{\mathrm{op}} = (ba)^{\mathrm{op}}$.
        Following \cite{wasilewski_quantum_2024} 
        we denote the elements of $B^{\mathrm{op}}$ by 
        $b^{\mathrm{op}}$
        for $b \in B$.}.
    \end{enumerate}
    If the quantum set $(B, \psi)$ is tracial we call the quantum graph $\mathcal{G}$ tracial as well. Further, we call the dimension of $S$ as a vector space the number of \emph{quantum edges} of $\mathcal{G}$, and refer to \cite[Section 2.3]{gromada_examples_2022} for a detailed discussion of quantum edges of quantum graphs.
\end{definition}

There are 1-1-correspondences between quantum edge spaces, adjacency matrices and edge projections. These go back to \cite{musto_compositional_2018, weaver_quantum_2012} and have been discussed in various places. In the case of tracial or nontracial quantum sets different correspondences have been described by Daws and Wasilewski \cite{daws_quantum_2024,wasilewski_quantum_2024}. In this paper, we consistently use the framework of Wasilewski.
%For our definition of quantum graphs \cite{wasilewski_quantum_2024} is the primary reference. 
Then, the applicable correspondences are compactly explained in \cite[Section 3.4]{wasilewski_quantum_2024}.

In fact, we will only be interested in quantum sets of the form $(M_n, \psi)$, and we further specialize on the case where the quantum edge space $S \subset B(\mathcal{H})$ is considered with respect to the natural embedding $M_n \subset B(\C^n)$. In particular, the condition that $S$ is a $B^\prime$-$B^\prime$-submodule simplifies to asking that $S$ is some operator space in $M_n$ since $B^\prime = \C$. 

Before proceeding, we need to discuss alternative inner products on $M_n$ besides the GNS-inner product induced by the faithful positive functional $\psi$. Evidently, there is a positive definite matrix $\rho$ such that $\psi = \mathrm{Tr}(\rho\, \cdot)$. Then, the KMS-inner product induced by $\psi$ is given by
\begin{align*}
    \langle x, y \rangle^{KMS}_\psi = \psi\left(x^\ast \rho^{\frac{1}{2}} y \rho^{-\frac{1}{2}}\right) = \mathrm{Tr}(x^\ast \rho^{\frac{1}{2}} y \rho^{\frac{1}{2}}).
\end{align*}
Further, we will need the KMS-inner product induced by the functional 
$$
    \psi^{-1} = \frac{\mathrm{Tr}(\rho^{-1} \, \cdot)}{\mathrm{Tr}(\rho^{-1})} .
$$
Since $m m^\ast = \mathrm{Id}_n$ entails $\mathrm{Tr}(\rho^{-1}) = 1$ (see Proposition \ref{pre::prop:delta_form_versus_mm*=Id}), this is given by
\begin{align*}
    \langle x, y \rangle^{KMS}_{\psi^{-1}} = \mathrm{Tr}(x^\ast \rho^{-\frac{1}{2}} y \rho^{-\frac{1}{2}}).
\end{align*}
Now, the correspondence between quantum edge spaces $S$ and quantum adjacency matrices $A$ is as follows.

\begin{proposition}[{$S\mapsto A$, \cite[Proposition 3.30]{wasilewski_quantum_2024}}]
    \label{pre::prop:S->A}
    Let $(X_i)_i$ be an orthonormal basis of the quantum edge space $S\subset M_n$ with respect to the KMS-inner product induced by $\psi^{-1}$. Then, the map
    \begin{align*} 
        A\cc M_n \to M_n, \quad x \mapsto
        \sum_i\rho^{-\frac{1}{4}}X_i\rho^{\frac{1}{4}}x\rho^{\frac{1}{4}}X_i^*\rho^{-\frac{1}{4}}
    \end{align*}
    is a quantum adjacency matrix, and the assignment $S \mapsto A$ gives a well-defined\footnote{This means that $A$ does only depend on the space $S$ and not on the choice of KMS-orthonormal basis.} 1-1-correspondence between quantum edge spaces and quantum adjacency matrices.
\end{proposition}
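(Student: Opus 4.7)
The proposition has three ingredients to establish: well-definedness of $A$ in terms of the basis choice, the adjacency-matrix axioms for $A$, and bijectivity of the assignment $S \mapsto A$. For well-definedness, I would observe that two KMS-orthonormal bases $(X_i)$ and $(Y_j)$ of $S$ differ by a unitary matrix $U$ with $Y_j = \sum_i U_{ij} X_i$. Substituting into the defining formula and using $\sum_j U_{ij} \overline{U_{kj}} = \delta_{ik}$, the double sum collapses to the same expression regardless of the KMS-orthonormal basis chosen, so the map $A$ depends only on $S$.

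For the adjacency-matrix axioms, the $*$-condition $A(x^*) = A(x)^*$ follows immediately by taking the adjoint of each summand in the defining formula, using that $\rho^{\pm 1/4}$ are self-adjoint and that the outer sum is unaffected by re-indexing. The multiplicativity identity $m(A \otimes A)m^* = A$ is the main computation. Choosing a basis in which $\rho$ is diagonal (so the formula from Proposition \ref{pre::prop:delta_form_versus_mm*=Id} $m^*(e_{ij}\rho^{-1}) = \sum_k e_{ik}\rho^{-1} \otimes e_{kj}\rho^{-1}$ is directly applicable), I would evaluate both sides on the matrix units $e_{ij}\rho^{-1}$ and carefully track the $\rho^{\pm 1/4}$ factors that appear between successive $X_i$'s after contracting through $m$. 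The resulting double sum over pairs $(X_i, X_j)$ of basis vectors should then collapse to a single sum precisely via the orthonormality relation $\Tr(X_i^* \rho^{-1/2} X_j \rho^{-1/2}) = \delta_{ij}$ once the interior trace is evaluated.

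For bijectivity, given a quantum adjacency matrix $A$, I would use that $A$ is completely positive (as remarked in the footnote accompanying Definition \ref{pre::def:qgraph}) to obtain a Kraus decomposition $A(x) = \sum_i K_i x K_i^*$. Setting $X_i := \rho^{1/4} K_i \rho^{-1/4}$ recovers the form from the proposition, since then $\rho^{-1/4} X_i \rho^{1/4} = K_i$ and $\rho^{1/4} X_i^* \rho^{-1/4} = K_i^*$. The identity $m(A \otimes A) m^* = A$ translates under this substitution into KMS-orthonormality of the $X_i$ with respect to $\psi^{-1}$. Defining $S := \spann\{X_i\}$ yields the inverse assignment; uniqueness of the minimal Kraus representation up to a unitary change of basis on the index set, combined with the well-definedness from step one, makes $S \mapsto A$ and $A \mapsto S$ mutual inverses.

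The main obstacle will be the multiplicativity computation: beyond the bookkeeping of the $\rho^{\pm 1/4}$ factors, one has to see clearly why $m(A \otimes A) m^* = A$ is equivalent to orthonormality with respect to the $\psi^{-1}$-KMS inner product rather than with respect to the $\psi$-KMS, GNS, or Hilbert-Schmidt inner products. Once this equivalence is made transparent — ideally by working in a basis that diagonalizes $\rho$ and by expressing partial contractions of $m^*$ as $\rho^{-1}$-weighted traces that match the $\psi^{-1}$-KMS pairing directly — both the forward verification and the inverse construction fall into place simultaneously.
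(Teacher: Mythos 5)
The paper itself gives no proof of this proposition: it is quoted verbatim from Wasilewski \cite[Proposition 3.30]{wasilewski_quantum_2024} and used as a black box, so there is no in-paper argument to compare against. Your outline is nevertheless correct and is essentially the standard argument one would expect (and close to the cited proof). In particular you identify the crucial point correctly: contracting $m(A\otimes A)m^\ast$ on $e_{ij}\rho^{-1}$ via $\sum_k e_{ik} M e_{kj} = \mathrm{Tr}(M)e_{ij}$ produces exactly the pairing $\mathrm{Tr}(X_a^\ast \rho^{-1/2} X_b \rho^{-1/2}) = \langle X_a, X_b\rangle^{KMS}_{\psi^{-1}}$, which is why the $\psi^{-1}$-KMS inner product, and not the GNS or Hilbert--Schmidt one, governs the correspondence. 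Two small points deserve to be made explicit. First, the formula $m^\ast(e_{ij}\rho^{-1}) = \sum_k e_{ik}\rho^{-1}\otimes e_{kj}\rho^{-1}$ from Proposition \ref{pre::prop:delta_form_versus_mm*=Id} holds in any basis, so the reduction to diagonal $\rho$ is unnecessary (though harmless). Second, in the surjectivity step the passage from
\begin{align*}
\sum_{a,b}\langle X_a, X_b\rangle^{KMS}_{\psi^{-1}}\,\rho^{-\frac{1}{4}}X_a\rho^{\frac{1}{4}}\,x\,\rho^{\frac{1}{4}}X_b^\ast\rho^{-\frac{1}{4}} = \sum_a \rho^{-\frac{1}{4}}X_a\rho^{\frac{1}{4}}\,x\,\rho^{\frac{1}{4}}X_a^\ast\rho^{-\frac{1}{4}}
\end{align*}
to the conclusion that the Gram matrix is the identity requires the linear independence of the maps $x\mapsto K_a x K_b^\ast$ for a minimal (linearly independent) Kraus family; this is a standard fact but it is the load-bearing step and should be stated. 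With those additions your sketch is a complete and self-contained proof.
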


\begin{remark}
    \label{pre::rem:adjacency_matrix_A_as_Kronecker_product}
    The linear spaces $M_n$ and $\C^{n^2}$ are naturally identified via the map $\mathrm{vec}: M_n \ni e_{ij} \mapsto f_{(i-1)n + j} \in \C^{n^2}$ where the $f_i$ with $i=1,\dots,n^2$ are the standard generators of $\C^{n^2}$. Then, a linear map on $M_n$ is identified with a linear map on $\C^{n^2}$ which in turn is nothing else but an $n^2 \times n^2$-matrix. Generally, one has for any $R, S, X \in M_n$
    \begin{align*}
        \mathrm{vec}(RXS) = (R \otimes S^t) \mathrm{vec}(X),
    \end{align*}
    see e.g. \cite[Section 1.1.2]{watrous_theory_2018}.
    Here $S^t$ is the transposed of $S$ and $\otimes$ denotes the Kronecker product given by
    \begin{align*}
        [R \otimes S]_{(i-1)n+j, (k-1)n+\ell} = R_{ik} S_{j \ell}.
    \end{align*}
    Thus we can identify the adjacency matrix $A: M_n \to M_n$ from the previous proposition with the $n^2 \times n^2$-matrix
    \begin{align*}
        A &\vcentcolon= \sum_i (\rho^{-\frac{1}{4}}X_i\rho^{\frac{1}{4}}) \otimes (\rho^{\frac{1}{4}}X_i^*\rho^{-\frac{1}{4}})^t 
            = \sum_i (\rho^{-\frac{1}{4}}X_i\rho^{\frac{1}{4}}) \otimes \overline{(\rho^{-\frac{1}{4}}X_i \rho^{\frac{1}{4}})}.
    \end{align*}
\end{remark}

The next proposition describes the correspondence between quantum edge spaces and quantum edge projections.

\begin{proposition}[{$S\mapsto P$, \cite[Proposition 3.29]{wasilewski_quantum_2024}}]
    \label{pre::prop:S->P}
    Let $(X_i)_i$ be an orthonormal basis of the quantum edge space $S\subset M_n$ with respect to the KMS inner product induced by $\psi^{-1}$ and define $P\in M_n\otimes M_n^{\op}$ as\footnote{Note that the formula in \cite[Proposition 3.29]{wasilewski_quantum_2024} contains a scalar factor $\frac{1}{\mathrm{Tr}(\rho^{-1})}$. However, as $m m^\ast = \mathrm{Id}_n$, we have $\mathrm{Tr}(\rho^{-1}) = 1$, and therefore we can do without this factor.}
    \begin{align*}
        %\frac{1}{\mathrm{Tr}(\rho^{-1})}
        P = \sum_{i,k,l}\rho^{-\frac{1}{4}}X_ie_{kl}\rho^{-\frac{1}{4}}\otimes
        \left(\rho^{-\frac{1}{4}}X_i^*e_{lk}\rho^{-\frac{1}{4}}\right)^{\op}.
    \end{align*}
    
    Then $P$ is a quantum edge projection, and the assignment $S \mapsto P$ gives a well-defined 1:1-correspondence between quantum edge spaces and quantum edge projections.
\end{proposition}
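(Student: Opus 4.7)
My plan is to exploit the natural $*$-isomorphism $\Phi \cc M_n \otimes M_n^{\op} \to B(M_n)$ that sends $a \otimes b^{\op}$ to the operator $x \mapsto axb$, where $M_n$ carries the Hilbert--Schmidt inner product $\langle A, B\rangle_{HS} = \mathrm{Tr}(A^*B)$. Since multiplication in $M_n^{\op}$ is reversed, $\Phi$ is an algebra homomorphism, and since the $HS$-adjoint of $x \mapsto axb$ is $y \mapsto a^*yb^*$, it preserves the involution. Thus projections in $M_n \otimes M_n^{\op}$ correspond bijectively to orthogonal $HS$-projections on $M_n$, and hence to linear subspaces of $M_n$ via taking ranges; as $B' = \C$ imposes no bimodule constraint, these are precisely the admissible quantum edge spaces.

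The key computation is to evaluate $\Phi(P)$ on an arbitrary $y \in M_n$. Setting $Z = \rho^{-1/4} y \rho^{-1/4}$ and applying the identity $\sum_{k,l} e_{kl} W e_{lk} = \mathrm{Tr}(W)\mathrm{I}_n$ (valid for any $W \in M_n$), the inner double sum collapses and yields
\begin{align*}
    \Phi(P)(y) = \sum_i \mathrm{Tr}\bigl(X_i^* \rho^{-1/4} y \rho^{-1/4}\bigr)\, \rho^{-1/4} X_i \rho^{-1/4}.
\end{align*}
Setting $Y_i = \rho^{-1/4} X_i \rho^{-1/4}$ and applying cyclicity of the trace, this rewrites as $\Phi(P)(y) = \sum_i \langle Y_i, y \rangle_{HS}\, Y_i$. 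A parallel cyclicity argument turns the KMS-orthonormality $\mathrm{Tr}(X_i^* \rho^{-1/2} X_j \rho^{-1/2}) = \delta_{ij}$ of $(X_i)_i$ into $HS$-orthonormality of $(Y_i)_i$. Hence $\Phi(P)$ is the orthogonal $HS$-projection onto $\rho^{-1/4} S \rho^{-1/4}$, which immediately establishes $P = P^*$, $P^2 = P$, and independence of the chosen basis $(X_i)_i$.

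For bijectivity, I would invert the correspondence explicitly: given a projection $P \in M_n \otimes M_n^{\op}$, take the range $T$ of $\Phi(P)$ and recover $S$ as $\rho^{1/4} T \rho^{1/4}$. The computation above shows that $S \mapsto P \mapsto S$ is the identity, and conversely $P \mapsto S \mapsto P$ is the identity because any $HS$-orthonormal basis of $\rho^{-1/4} S \rho^{-1/4}$ produces the same orthogonal $HS$-projection via the spectral-like formula we derived.

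The main obstacle is purely notational: the formula for $P$ involves four separate $\rho^{-1/4}$ factors interleaved across an opposite-algebra tensor, and confirming that the powers of $\rho$ combine correctly in the key identities $\langle Y_i, Y_j \rangle_{HS} = \delta_{ij}$ and $\mathrm{Tr}(Y_i^* y) = \mathrm{Tr}(X_i^* \rho^{-1/4} y \rho^{-1/4})$ requires careful bookkeeping with cyclicity of the trace together with the normalization $\mathrm{Tr}(\rho^{-1}) = 1$ guaranteed by $mm^* = \mathrm{Id}_n$ (as recorded in the footnote and in Proposition \ref{pre::prop:delta_form_versus_mm*=Id}).
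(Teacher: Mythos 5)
Your argument is correct and complete. Note that the paper itself does not prove this proposition at all: it is imported verbatim from Wasilewski's Proposition 3.29 (with the scalar $\frac{1}{\Tr(\rho^{-1})}$ absorbed by the normalization $\Tr(\rho^{-1})=1$), so there is no in-paper proof to compare against. Your route -- pushing $P$ through the $*$-isomorphism $\Phi\colon M_n\otimes M_n^{\op}\to B(M_n)$, collapsing the double sum via $\sum_{k,l}e_{kl}We_{lk}=\Tr(W)\mathrm{I}_n$, and recognizing $\Phi(P)$ as the Hilbert--Schmidt orthogonal projection onto $\rho^{-1/4}S\rho^{-1/4}$ -- is a clean, self-contained verification; the key identities $\langle Y_i,Y_j\rangle_{HS}=\langle X_i,X_j\rangle^{KMS}_{\psi^{-1}}$ and $\langle Y_i,y\rangle_{HS}=\Tr(X_i^*\rho^{-1/4}y\rho^{-1/4})$ both check out by cyclicity, and self-adjointness and idempotence of $P$, basis-independence, and bijectivity all follow at once from injectivity of $\Phi$ and the subspace--projection correspondence. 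One small clarification: $\Tr(\rho^{-1})=1$ is not used anywhere in your projection computation itself; its only role is to justify the paper's convention $\langle x,y\rangle^{KMS}_{\psi^{-1}}=\Tr(x^*\rho^{-1/2}y\rho^{-1/2})$ without a normalizing denominator, which is exactly what makes the $Y_i$ come out $HS$-orthonormal with no extra scalar in the formula for $P$. You also correctly flag that the bimodule condition is vacuous here because $B'=\C$ for the standard embedding $M_n\subset B(\C^n)$, which is the only case the paper uses.
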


\begin{example}
\label{pre::example:empty_trivial_complete_qgraph}
Let us present three easy examples of quantum graphs.
\begin{enumerate}
    \item 
    The \emph{empty quantum graph} on $(M_n, \psi)$ is given by the quantum edge space $S=\{0\}$. 
    \item
    The \emph{trivial quantum graph} on $(M_n, \psi)$ is given by the quantum edge space $S = \spann \{\mathrm{I}_n\}$ or equivalently by the quantum adjacency matrix $A = \mathrm{I}_{n^2}$. It is a reflexive and undirected quantum graph with one quantum edge.
    \item 
    The \emph{complete quantum graph} on $(M_n, \psi)$ is given by the quantum edge space $S = M_n$. This is quantum graph with the maximal number of $n^2$ quantum edges.
\end{enumerate}
\end{example}

\subsection{Properties of Quantum Graphs}
\label{pre::subsec:properties_of_quantum_graphs}
%--------------------------------------------------------------------------------------------

There are two important pairs of properties of quantum graphs which are sometimes part of their definition. These are being directed/undirected and being reflexive/loopfree. For instance \cite{daws_quantum_2024,musto_compositional_2018} asks that their quantum graphs be always undirected while \cite{matsuda_classification_2022} focuses in his classification on undirected and reflexive quantum graphs. At this point, we only discuss these properties for quantum graphs on quantum sets of the form $(M_n, \psi)$ for we are only interested in quantum graphs on $M_2$. As before, let $\rho \in M_n$ be the positive matrix with $\psi(\cdot) = \mathrm{Tr}(\rho \, \cdot)$.

Let us first present the definition of reflexive and loopfree quantum graphs. This definition is well-known, see e.g. \cite[Definion 2.19]{matsuda_classification_2022}, \cite[Section 1.3]{gromada_examples_2022}.

\begin{definition}
\label{pre::def:(ir)reflxive_qgraphs}
    Let $\mathcal{G}$ be a quantum graph on $(M_n, \psi)$ with quantum edge space $S \subset M_n$, quantum adjacency matrix $A: M_n \to M_n$ and quantum edge projection $P \in M_n \otimes M_n^\mathrm{op}$. We say that $\mathcal{G}$ is \emph{reflexive} (or \emph{has all loops}) if one of the following equivalent conditions is satisfied.
    \begin{enumerate}
        \item $\mathrm{I}_n\in S$,
        \item $m(A\otimes\Id_n)m^\ast=\Id_n$, % \text{ resp. } $\diag(A)=(1,..,1)$.
        \item $m((\mathrm{Id} \otimes \sigma_{\frac{i}{2}})P) = \mathrm{I}_n$ where $\sigma_z: M_n \to M_n, x \mapsto \rho^{iz} x \rho^{-iz}$ for all $z \in \C$.
        %\item $m(P_\psi)=\mathrm{I}_n$ where $P_\psi = \sum_i A_i \otimes \left(\rho^{-\frac{1}{2}} B_i \rho^{\frac{1}{2}}\right)^{\mathrm{op}}$ if $P = \sum_i A_i \otimes B_i^\mathrm{op}$.
    \end{enumerate}
    On the other hand, we say that $\mathcal{G}$ is \emph{loopfree} (or \emph{irreflexive}) if one of the following holds.
    \begin{enumerate}
        \item $\mathrm{I}_n\in S^\bot$ where the complement is taken with respect to the KMS-inner product induced by $\psi^{-1}$,\footnote{Using the definition of $\psi^{-1}$ this is equivalent to $\Tr(x \rho^{-1}) = 0$ for all $x \in S$.}
        \item $m(A\otimes\Id_n)m^\ast=0$, % \text{ resp. } $\diag(A)=(0,..,0)$.
        \item $m((\mathrm{Id} \otimes \sigma_{\frac{i}{2}})P) = 0$.
    \end{enumerate}
\end{definition}

\begin{proposition}
    The previous definition is well-defined.
\end{proposition}

\begin{proof}
    We show that the three conditions are equivalent, respectively. 
    
    For ``(1) $\Leftrightarrow$ (2)'' let $(X_i)_{i=0}^s$ be an orthonormal basis of $S$ with respect to the KMS-inner product induced by $\psi^{-1}$. Without loss of generality, we may assume $\langle X_i, \mathrm{I}_n \rangle_{\psi^{-1}}^{KMS} = 0$ for all $i > 0$. Then, one has
    $\mathrm{I}_n \in S$ if and only if $X_0 = \mathrm{I}_n$, and $\mathrm{I}_n \in S^\bot$ if and only if $\langle X_0, \mathrm{I}_n \rangle_{\psi^{-1}}^{KMS} = 0$.
    \comments{
    \begin{align*}
        & \mathrm{I}_n \in S & \Leftrightarrow & \qquad \qquad X_0 = \mathrm{I}_n, \\ % \langle X_0, \mathrm{I}_n \rangle_{\psi^{-1}}^{KMS} = 1, \\
        & \mathrm{I}_n \in S^\bot & \Leftrightarrow & \qquad \qquad \langle X_0, \mathrm{I}_n \rangle_{\psi^{-1}}^{KMS} = 0. \\
    \end{align*}
    }
    Using Propositions \ref{pre::prop:delta_form_versus_mm*=Id} and \ref{pre::prop:S->A} one has for all matrix units $e_{ac} \in M_n$
    \begin{align*}
        m(A \otimes \mathrm{Id}_n)m^\ast(e_{ac}\rho^{-1})
            &= \sum_{b}  A(e_{ab}\rho^{-1}) e_{bc} \rho^{-1} \\
            &= \sum_b \left( \sum_i \rho^{-\frac{1}{4}} X_i \rho^{\frac{1}{4}} \left( e_{ab} \rho^{-1} \right) \rho^{\frac{1}{4}} X_i^\ast  \rho^{-\frac{1}{4}} \right) e_{bc} \rho^{-1} \\
            &= \sum_i \rho^{-\frac{1}{4}} X_i \rho^{\frac{1}{4}} \left( \sum_b e_{ab} \rho^{-\frac{3}{4}} X_i^\ast  \rho^{-\frac{1}{4}} e_{bc} \right) \rho^{-1}.
    \end{align*}
    For every index $i$ one observes
    \begin{align*}
        \sum_b e_{ab} \rho^{-\frac{3}{4}} X_i^\ast  \rho^{-\frac{1}{4}} e_{bc}
            &=\mathrm{Tr}\left( \rho^{-\frac{3}{4}} X_i^\ast  \rho^{-\frac{1}{4}} \right) e_{ac}\\
            &=\mathrm{Tr}\left(X_i^\ast  \rho^{-\frac{1}{2}} \mathrm{I}_n \rho^{-\frac{1}{2}} \right) e_{ac}\\
            &= \langle X_i, \mathrm{I}_n \rangle_{\psi^{-1}}^{KMS} e_{ac}.
    \end{align*}
    Using $\langle X_i, \mathrm{I}_n \rangle_{\psi^{-1}}^{KMS} = 0$ for $i > 0$ it follows
    \begin{align*}
        m(A \otimes \mathrm{Id}_n)m^\ast(e_{ac}\rho^{-1})
        =\rho^{-\frac{1}{4}} X_0 \rho^{\frac{1}{4}} \langle X_0, \mathrm{I}_n \rangle_{\psi^{-1}}^{KMS} e_{ac} \rho^{-1}.
    \end{align*}
    \comments{
    \begin{align*}
        m(A \otimes \mathrm{Id}_n)m^\ast(e_{ac}\rho^{-1})
            &= \sum_{b}  A(e_{ab}\rho^{-1}) e_{bc} \rho^{-1} \\
            &= \sum_b \left( \sum_i \rho^{-\frac{1}{4}} X_i \rho^{\frac{1}{4}} \left( e_{ab} \rho^{-1} \right) \rho^{\frac{1}{4}} X_i^\ast  \rho^{-\frac{1}{4}} \right) e_{bc} \rho^{-1} \\
            &= \sum_i \rho^{-\frac{1}{4}} X_i \rho^{\frac{1}{4}} \left( \sum_b e_{ab} \rho^{-\frac{3}{4}} X_i^\ast  \rho^{-\frac{1}{4}} e_{bc} \right) \rho^{-1} \\
            &= \sum_i \rho^{-\frac{1}{4}} X_i \rho^{\frac{1}{4}} \mathrm{Tr}\left( \rho^{-\frac{3}{4}} X_i^\ast  \rho^{-\frac{1}{4}} \right) e_{ac} \rho^{-1} \\
            &= \sum_i \rho^{-\frac{1}{4}} X_i \rho^{\frac{1}{4}} \mathrm{Tr}\left(X_i^\ast  \rho^{-1} \right) e_{ac} \rho^{-1} \\
            &= \sum_i \rho^{-\frac{1}{4}} X_i \rho^{\frac{1}{4}} \langle X_i, \mathrm{I}_n \rangle_{\psi^{-1}}^{KMS} e_{ac} \rho^{-1} \\
            &= \rho^{-\frac{1}{4}} X_0 \rho^{\frac{1}{4}} \langle X_0, \mathrm{I}_n \rangle_{\psi^{-1}}^{KMS} e_{ac} \rho^{-1}.
    \end{align*}
    }
    Consequently, we have $m(A \otimes \mathrm{Id}_n) m^\ast = \mathrm{Id}_n$ if and only if, $\mathrm{I}_n \in S$, and similarly $m(A \otimes \mathrm{Id}_n) m^\ast = 0$ if and only if, $\mathrm{I}_n \in S^\bot$.
    
    The equivalence ``(1) $\Leftrightarrow$ (3)'' can be seen very similarly. Indeed, with Proposition \ref{pre::prop:S->P} and $(X_i)_{i=0}^s$ as above, it suffices to note
    \comments{
    \begin{align*}
        m((\mathrm{Id} \otimes \sigma_{\frac{i}{2}})P)
        &= m\left(\sum_{i,k,l}\rho^{-\frac{1}{4}}X_ie_{kl}\rho^{-\frac{1}{4}}\otimes
        \left(\rho^{-\frac{1}{2}} \left( \rho^{-\frac{1}{4}}X_i^*e_{lk}\rho^{-\frac{1}{4}} \right) \rho^{\frac{1}{2}}\right)^{\op}\right) \\
        &= \sum_{i,k,l} \rho^{-\frac{1}{4}}X_ie_{kl}\rho^{-1}X_i^*e_{lk}\rho^{\frac{1}{4}} 
    \end{align*}
    By using that $\sum_le_{kl}\rho^{-1}X_i^*e_{lk}=\mathrm{Tr}\left(X_i^* \rho^{ -\frac{1}{2}} \mathrm{I}_n \rho^{-\frac{1}{2}}\right) e_{kk}$ we get
    \begin{align*}
        m(P_\psi)
        =\rho^{-\frac{1}{4}}X_0\rho^{\frac{1}{4}} \langle X_0, \mathrm{I}_n \rangle_{\psi^{-1}}^{KMS}.
    \end{align*}
    }
    \begin{align*}
        m((\mathrm{Id} \otimes \sigma_{\frac{i}{2}})P)
        &= m\left(\sum_{i,k,l}\rho^{-\frac{1}{4}}X_ie_{kl}\rho^{-\frac{1}{4}}\otimes
        \left(\rho^{-\frac{1}{2}} \left( \rho^{-\frac{1}{4}}X_i^*e_{lk}\rho^{-\frac{1}{4}} \right) \rho^{\frac{1}{2}}\right)^{\op}\right) \\
        &= \sum_{i,k,l} \rho^{-\frac{1}{4}}X_ie_{kl}\rho^{-1}X_i^*e_{lk}\rho^{\frac{1}{4}} \\
        %&= \sum_{i,k} \rho^{-\frac{1}{4}}X_i \mathrm{Tr}\left(X_i^* \rho^{ -\frac{1}{2}} \mathrm{I}_n \rho^{-\frac{1}{2}}\right) e_{kk} \rho^{\frac{1}{4}} \\
        &= \rho^{-\frac{1}{4}}X_0\rho^{\frac{1}{4}} \langle X_0, \mathrm{I}_n \rangle_{\psi^{-1}}^{KMS}.
    \end{align*}
    %Consequently, the full equivalences are verified.
\end{proof}

The definition of undirected quantum graphs is well-known in the tracial case. For quantum graphs on nontracial quantum sets, the definition can be based on either the GNS-inner product or alternatively the KMS-inner product. The former approach has been discussed in \cite{daws_quantum_2024}, while the use of the KMS-inner product has been pioneered in \cite{wasilewski_quantum_2024}. 
% As the latter paper does not discuss KMS-undirected graphs in detail, we elaborate on them below.

\begin{definition}
\label{pre::def:GNS-KMS-undirected_quantum_graphs}
    Let $\mathcal{G}$ be a quantum graph on $(M_n, \psi)$. We say that $\mathcal{G}$ is \emph{GNS-undirected} if one of the following equivalent conditions is satisfied:
    \begin{enumerate}
        \item $S=S^*$ and $\rho S \rho^{-1} = S$
        \item $\sigma(P)=P$ and $P = (\sigma_z \otimes \sigma_z)(P)$ for all $z \in \C$, where 
        $$
        \sigma(x \otimes y) = x \otimes y
        \text{ and }
        \sigma_z(x) = \rho^{iz} x \rho^{-iz}
        $$
        for all $x, y \in M_n$ and $z \in \C$,
        %\todo{Hier wird sigma eingeführt - wird aber schon vorher gebraucht, nämlich bei der Übersetzung von der Adjazenzmatrix zur Projektion.}
        \item $A=A^*$, where $A^*$ is the adjoint of $A$ viewed as a bounded operator on the GNS-space $L^2(M_n, \psi)$.
    \end{enumerate}
    On the other hand, $\mathcal{G}$ is called \emph{KMS-undirected} if one of the following equivalent conditions is true:
    \begin{enumerate}
        \item $S = S^*$,
        \item $\sigma(P) = P$,
        \item $A = A^*$, where $A^*$ is the adjoint of $A$ viewed as a bounded operator on the KMS-space $L^2_{KMS}(M_n, \psi)$.
    \end{enumerate}
    If $\psi$ is tracial, $\mathcal{G}$ is GNS-undirected if and only if it is KMS-undirected, and in this case we simply call $\mathcal{G}$ undirected.
\end{definition}

\begin{proposition}
    The previous definition is well-defined.
\end{proposition}

\begin{proof}
    The equivalence of the three statements for KMS-undirectedness can be directly found in Wasilewski's paper \cite{wasilewski_quantum_2024}. Indeed, the equivalence between (2) and (3) follows from \cite[Proposition 3.7 (iv)]{wasilewski_quantum_2024}. Further, (1) $\Leftrightarrow$ (2) follows from \cite[Proposition 3.29]{wasilewski_quantum_2024}.
    
    %Compare \cite[Proposition 3.7 (iv)]{wasilewski_quantum_2024} to get the correspondence of KMS-undirectedness between the operator space and the adjacency matrix. To get $S=S^*$ if and only if $\sigma(P)=P$, refer to \cite[Proposition 3.29]{wasilewski_quantum_2024}.

    It remains to show that the three statements for GNS-undirectedness are equivalent. This has been proved by Daws in \cite{daws_quantum_2024}, but we need to reprove it here in Wasilewski's framework. The following proof was suggested to us by Wasilewski.
    
    We use the observation that a GNS-self-adjoint linear map commutes with all modular operators $\sigma_z$ for $z \in \C$ \cite[Lemma 2.1]{wasilewski_quantum_2024}. It follows easily that $A$ is GNS-self-adjoint if and only if it is KMS-self-adjoint and commutes with all $\sigma_z$. In view of the equivalent characterizations of GNS-undirectedness, it only remains to prove that the following are equivalent:
    \begin{enumerate}
        \item[(a)] $A$ commutes with all $\sigma_z$,
        \item[(b)] $S = \rho S \rho^{-1}$,
        \item[(c)] $P = (\sigma_z \otimes \sigma_z)(P)$ for all $z \in \C$.
    \end{enumerate}

    Using functional calculus one checks that these statements are equivalent to a) $A$ commutes with $\sigma_1$, b) $S = \rho^i S \rho^{-i}$ and c) $P = (\sigma_1 \otimes \sigma_1)(P)$, respectively. We show that these are equivalent.

    \underline{(a) $\Leftrightarrow$ (b)} Let $\{X_j\}$ be an ONB of $S$. Then we get from Proposition \ref{pre::prop:S->A} and (a)
    \begin{align*}
        A(x) = (\sigma_{1} \circ A \circ \sigma_{-1})(x)
            = \sum_j\rho^i \rho^{-\frac{1}{4}} X_j \rho^{\frac{1}{4}}\rho^{-i} x \rho^i \rho^{\frac{1}{4}}X_j^*\rho^{-\frac{1}{4}} \rho^{-i}.
    \end{align*}
    It is not hard to check that $\{\rho^i X_j \rho^{-i}\}$ is again an ONB with respect to the KMS-inner product induced by $\psi_{-1}$. Hence, one obtains $S = \rho^i S \rho^{-i}$ from Proposition \ref{pre::prop:S->A}. 
    %By functional calculus, it follows $S = \rho^{iz} S \rho^{-iz}$ for all $z \in \C$, and in particular we obtain $S = \rho S \rho^{-1}$.
    Conversely, the same formula as above shows that $S = \rho^i S \rho^{-i}$ entails that $A$ commutes with $\sigma_{i}$. 
    %Functional calculus yields that $A$ commutes with all $\sigma_z$ for $z \in \C$ as well.

    \underline{(b) $\Leftrightarrow$ (c)} Let $\{X_j\}$ be an ONB of $S$ as above. From Proposition \ref{pre::prop:S->P} and (b) we get
    \begin{align*}
        (\sigma_{1} \otimes \sigma_{1})(P) = \sum_{j,k,l}\rho^i \rho^{-\frac{1}{4}}X_je_{kl}\rho^{-\frac{1}{4}}\rho^{-i} \otimes
        \left(\rho^i \rho^{-\frac{1}{4}}X_j^*e_{lk}\rho^{-\frac{1}{4}}\rho^{-i}\right)^{\op} 
        = P.
    \end{align*}
    Thus, $P$ commutes with $(\rho^i \otimes \rho^i)$. 
    %By functional calculus it follows that $P$ commutes with all powers $\rho^{iz}$ with $z \in \C$ as well. 
    Conversely, one can use $(\sigma_{1} \otimes \sigma_{1})(P) = P$ to prove $S = \rho^i S \rho^{-i}$ similarly as above.
\comments{
    The equivalence of the first three statements has been proved in \cite{daws_quantum_2024}. We show that the three statements in the latter part of the definition are equivalent. First, let $S \subset M_n$ be an operator space with some ONB $(X_i)_i$ with respect to the KMS-inner product induced by $\psi^{-1}$. By Proposition \ref{pre::prop:S->A} we have 
    \begin{align*}
        Ax = \sum_i\rho^{-\frac{1}{4}}X_i\rho^{\frac{1}{4}}x\rho^{\frac{1}{4}}X_i^*\rho^{-\frac{1}{4}}.
    \end{align*}
    For all $x, y \in M_n$ one has
    \begin{align*}
        \langle Ax, y \rangle^{KMS}_\psi
            &= \sum_i \mathrm{Tr}((\rho^{-\frac{1}{4}}X_i\rho^{\frac{1}{4}}x\rho^{\frac{1}{4}}X_i^*\rho^{-\frac{1}{4}})^\ast \rho^{\frac{1}{2}} y \rho^{\frac{1}{2}}) \\
            &= \sum_i \mathrm{Tr}(\rho^{-\frac{1}{4}}X_i\rho^{\frac{1}{4}}x^\ast\rho^{\frac{1}{4}}X_i^*\rho^{-\frac{1}{4}} \rho^{\frac{1}{2}} y \rho^{\frac{1}{2}}) \\
            &= \sum_i \mathrm{Tr}(x^\ast\rho^{\frac{1}{4}}X_i^*\rho^{-\frac{1}{4}} \rho^{\frac{1}{2}} y \rho^{\frac{1}{2}}\rho^{-\frac{1}{4}}X_i\rho^{\frac{1}{4}}) \\
            &= \sum_i \mathrm{Tr}(x^\ast\rho^{\frac{1}{2}}(\rho^{-\frac{1}{4}}X_i^*\rho^{\frac{1}{4}} y \rho^{\frac{1}{4}}X_i\rho^{-\frac{1}{4}})\rho^{\frac{1}{2}}) \\
            % &= \sum_i \mathrm{Tr}(\rho x \rho^{\frac{1}{4}} X_i^* \rho^{-\frac{1}{4}} \rho^{\frac{1}{2}} y \rho^{-\frac{1}{2}} \rho \rho^{-\frac{1}{4}} X_i \rho^{\frac{1}{4}} \rho^{-1}) \\
            % &= \sum_i \mathrm{Tr}(\rho x \rho^{\frac{1}{2}} (\rho^{-\frac{1}{4}}X_i^*\rho^{\frac{1}{4}} y \rho^{\frac{1}{4}}X_i\rho^{-\frac{1}{4}}) \rho^{-\frac{1}{2}}) \\
            &= \langle x, B y \rangle^{KMS}_\psi
    \end{align*}
    where $B y = \sum_i \rho^{-\frac{1}{4}}X_i^*\rho^{\frac{1}{4}} y \rho^{\frac{1}{4}}X_i\rho^{-\frac{1}{4}}$ is the quantum adjacency matrix corresponding to the quantum edge space $S^\ast \subset M_n$. Consequently, we have $S = S^\ast$ if and only if $A = A^\ast$, where the latter is the KMS-adjoint of $A$.

    Finally, recall from Proposition \ref{pre::prop:S->P}
    \begin{align*}
        P =
        \sum_{i,k,l}\rho^{-\frac{1}{4}}X_ie_{kl}\rho^{-\frac{1}{4}}\otimes
        \left(\rho^{-\frac{1}{4}}X_i^*e_{lk}\rho^{-\frac{1}{4}}\right)^{\op}
    \end{align*}
    where $(X_i)_i$ is an ONB of $S$ with respect to the KMS-inner product induced by $\psi^{-1}$. It follows immediately that $\sigma(P)$ is the quantum edge projection corresponding to the quantum edge space $S^*$. Hence, we have $S=S^*$ if and only if $P = \sigma(P)$.
}
\end{proof}

\begin{example}
\label{pre::example:properties}
Recall the examples of Example \ref{pre::example:empty_trivial_complete_qgraph}. Both the trivial and complete quantum graph on $M_n$ are reflexive, GNS-undirected and KMS-undirected.
\end{example}

\section{Quantum Graph Isomorphism}
\label{sec::qgraph_isomorphism}

% \subsection{Isomorphisms of Quantum Graphs}\label{Section:Isomorphism}
% %------------------------------------------------------------------------------------

In this section, we discuss when two quantum graphs are isomorphic. This is not be confused with quantum isomorphism of quantum graphs.
Note that we described the automorphism groups of $(M_2, \tau)$ and $(M_2, \psi_q)$ for $q \in (0,1)$ already in Proposition \ref{pre::prop:aut_group_of_(M2,psi)}. 

\begin{definition}
    \label{pre::def:isomorphism_of_qgraphs}
    Let $\mathcal{G}$ and $\hat{\mathcal{G}}$ be quantum graphs on quantum sets $(B, \psi)$ and $(\hat{B}, \hat{\psi})$ with quantum edge spaces $S$, $\hat{S}$, quantum adjacency matrices $A$, $\hat{A}$ and quantum edge projections $P$, $\hat{P}$, respectively. A $\ast$-isomorphism $\pi: B \to \hat{B}$ is 
    \begin{enumerate}
        \item an \emph{isomorphism of the quantum sets $(B, \psi)$ and $(\hat{B}, \hat{\psi})$} if
        \begin{align*}
            \hat{\psi} \circ \pi = \psi,
        \end{align*}
        \item an \emph{isomorphism of the quantum graphs $\mathcal{G}$ and $\hat{\mathcal{G}}$} if additionally one of the following equivalent conditions holds:
        \begin{enumerate}
            \item $\pi(S) = \hat{S}$,
            \item $\pi \circ A = \hat{A} \circ \pi$,
            \item $(\pi \otimes \pi^\mathrm{op}) P = \hat{P}$.
        \end{enumerate}
    \end{enumerate}
\end{definition}

\begin{proposition}
    The previous definition is well-defined.
\end{proposition}

We only prove the equivalence of the three conditions for quantum graph isomorphisms in the case where $B = 
\hat{B} = M_n$ and the quantum edge spaces $S$, $\hat{S}$ are chosen with respect to the embedding $M_n \subset B(\C^n)$. Indeed, this is the only case that we will use later on. 
It is no problem to prove the statement more generally.

\begin{proof}
    Let $U \in M_n$ be a unitary matrix with $\pi(x) = U x U^\ast$ for all $x \in M_n$, and let $(X_i)_i$ be an orthonormal basis of $S$ with respect to the KMS-inner product induced by $\psi^{-1}$. Further, let $\rho, \hat{\rho} \in M_n$ be positive definite matrices with
    \begin{align*}
        \psi = \mathrm{Tr}(\rho \, \cdot), \quad \hat{\psi} = \mathrm{Tr}(\hat{\rho} \, \cdot).
    \end{align*}
    Observe $U \rho = \hat{\rho} U$ since
    \begin{align*}
        \mathrm{Tr}(\rho \, \cdot) = \psi = \hat{\psi} \circ \pi = \mathrm{Tr}(\hat{\rho} U \, \cdot U^\ast) = \mathrm{Tr}(U^\ast \hat{\rho} U \, \cdot)
    \end{align*}
    entails $\rho = U^\ast \hat{\rho} U$. Further, observe that two matrices $X, Y \in M_n$ are orthogonal with respect to $\langle \cdot, \cdot \rangle^{KMS}_{\psi^{-1}}$ if and only if $UXU^\ast$ and $UYU^\ast$ are orthogonal with respect to $\langle \cdot, \cdot \rangle^{KMS}_{\hat{\psi}^{-1}}$. Indeed,
    \begin{align*}
        \langle UXU^\ast, UYU^\ast \rangle^{KMS}_{\hat{\psi}^{-1}}
            &= \mathrm{Tr}(UX^\ast U^\ast \hat{\rho}^{-\frac{1}{2}} U Y U^\ast \hat{\rho}^{-\frac{1}{2}}) \\
            &= \mathrm{Tr}(UX^\ast U^\ast U \rho^{-\frac{1}{2}} Y \rho^{-\frac{1}{2}} U^\ast) \\
            %&= \mathrm{Tr}(X \rho^{-\frac{1}{2}} Y \rho^{-\frac{1}{2}} ) \\
            &= \langle X, Y \rangle^{KMS}_{\psi^{-1}}.
    \end{align*}
    Thus, $(X_i)_i \subset M_n$ is an orthonormal basis of $S$ with respect to the KMS-inner product induced by $\psi^{-1}$ if and only if $(\pi(X_i))_i = (U X_i U^\ast)_i$ is an orthonormal basis of $\pi(S)$ with respect to the KMS-inner product induced by $\hat{\psi}^{-1}$.
    
    To prove ``(a) $\Rightarrow$ (b)'' assume $\hat{S} = \pi(S)$ and recall from Proposition \ref{pre::prop:S->A}
    \begin{align*}
        A(x) &= \sum_i\rho^{-\frac{1}{4}}X_i\rho^{\frac{1}{4}}x\rho^{\frac{1}{4}}X_i^*\rho^{-\frac{1}{4}}, \\
        \hat{A}(x) &= \sum_i \hat{\rho}^{-\frac{1}{4}}(U X_i U^\ast) \hat{\rho}^{\frac{1}{4}}x\hat{\rho}^{\frac{1}{4}} (U X_i^* U^\ast )\hat{\rho}^{-\frac{1}{4}}.
    \end{align*}
    Using again $U \rho = \hat{\rho} U$ one obtains for all $x \in M_n$
    \begin{align*}
        \pi \circ A(x)
            &= \sum_i U \rho^{-\frac{1}{4}} X_i \rho^{\frac{1}{4}} x \rho^{\frac{1}{4}}X_i^*\rho^{-\frac{1}{4}} U^\ast \\
            % &= \sum_i (\hat{\rho})^{-\frac{1}{4}} U X_i U^\ast U \rho^{\frac{1}{4}} x \rho^{\frac{1}{4}} U^\ast U X_i^* U^\ast (\hat{\rho})^{-\frac{1}{4}} \\
            &= \sum_i \hat{\rho}^{-\frac{1}{4}} U X_i U^\ast \hat{\rho}^{\frac{1}{4}} U x U^\ast \hat{\rho}^{\frac{1}{4}} U X_i^* U^\ast \hat{\rho}^{-\frac{1}{4}} \\
            &= \hat{A} \circ \pi(x).
    \end{align*}

    For ``(b) $\Rightarrow$ (a)'' observe
    \begin{align*}
        \hat{A}(x) 
            &= \hat{A} \circ \pi \circ \pi^{-1}(x) \\
            &= \pi \circ A \circ \pi^{-1}(x) \\
            &= \sum_i U \rho^{-\frac{1}{4}}X_i\rho^{\frac{1}{4}} (U^\ast x U) \rho^{\frac{1}{4}}X_i^*\rho^{-\frac{1}{4}} U^\ast \\
            &= \sum_i \hat{\rho}^{-\frac{1}{4}} UX_i U^\ast \hat{\rho}^{\frac{1}{4}} x \hat{\rho}^{\frac{1}{4}} U X_i^* U^\ast \hat{\rho}^{-\frac{1}{4}},
    \end{align*}
    where $(UX_iU^\ast)_i$ is an orthonormal set with respect to the KMS-inner product induced by $\hat{\psi}^{-1}$. Consequently, it follows from Proposition \ref{pre::prop:S->A} that 
    $$
    \hat{S} = \mathrm{span}\left\{ UX_iU^\ast: i \right\} = \pi(S).
    $$

    The equivalence ``(a) $\Leftrightarrow$ (c)'' is shown very similarly. Indeed, it suffices to observe with Proposition \ref{pre::prop:S->P}
    \begin{align*}
        (\pi \otimes \pi^\mathrm{op}) P
            &=
            \sum_{i,k,\ell} U \rho^{-\frac{1}{4}}X_ie_{k\ell}\rho^{-\frac{1}{4}} U^\ast \otimes
            \left(U \rho^{-\frac{1}{4}}X_i^*e_{\ell k}\rho^{-\frac{1}{4}} U^\ast \right)^{\op} \\
            &= 
            \sum_{i,k,\ell}\hat{\rho}^{-\frac{1}{4}} (U X_iU^\ast) U e_{k\ell} U^\ast \hat{\rho}^{-\frac{1}{4}}
            \otimes
            \left(\hat{\rho}^{-\frac{1}{4}} (U X_i^* U^\ast) U e_{\ell k} U^\ast \hat{\rho}^{-\frac{1}{4}}\right)^{\op}\\
            &= 
            \sum_{i,k,\ell}\hat{\rho}^{-\frac{1}{4}} (U X_iU^\ast) e_{k\ell} \hat{\rho}^{-\frac{1}{4}} 
            \otimes
            \left(\hat{\rho}^{-\frac{1}{4}} (U X_i^* U^\ast) e_{\ell k} \hat{\rho}^{-\frac{1}{4}}\right)^{\op}.
    \end{align*}
    For the final equality, we use that $(Ue_{k\ell}U^\ast)_{k, \ell}$ provides an alternative set of matrix elements for $M_n$ and the statement from Proposition \ref{pre::prop:S->P} does not depend on the particular matrix elements.
    %The final equality follows from the observation
    %\begin{align*}
    %    U J U^\ast = J\quad \text{ and } \quad U e_{k\ell} U^\ast = U^\ast e_{\ell k} U \quad \text{ for all } k, \ell \leq n,
    %\end{align*}
    %where $J$ is the $n \times n$-matrix with every entry equal to one.
\end{proof}

\subsection{Pauli Spaces of Quantum Graphs on \texorpdfstring{$M_2$}{M2}}
\label{sec::quantum_edge_spaces_and_Pauli_matrices}
%-------------------------------------------------------------------------
%-------------------------------------------------------------------------

To classify the quantum graphs on $(M_2, \psi_q)$ for $q \in (0,1]$ we will use the quantum edge space picture. In view of Definition \ref{pre::def:isomorphism_of_qgraphs}, this means that we need to classify all subspaces $S \subset M_2$ up to the equivalence relation 
\begin{align*}
    S_1 \sim_q S_2
    \quad \vcentcolon \Leftrightarrow \quad 
    \pi(S_1) = S_2 \quad \text{ for some } \quad \pi \in \mathrm{Aut}(M_2, \psi_q).
\end{align*}
Following the approach of \cite{gromada_examples_2022} we use the basis of $M_2$ given by the Pauli matrices
to reformulate this problem in a more accessible way. Indeed, in Proposition \ref{pre::prop:aut_group_of_(M2,psi)} we described the action of the automorphism groups of $(M_2, \psi_q)$ foresightedly in terms of the Pauli matrices. 

First, we discuss the tracial case. For that,  recall $\psi_1 = \tau = 2 \mathrm{Tr}$.

\begin{definition}
    \label{pau::def:pauli_space_of_qgraph}
    For a quantum graph $\mathcal{G}$ on $(M_2, \tau)$ with quantum edge space $S \subset M_2$ set
    \begin{align*}
        V := \left\{\begin{pmatrix}
            x_0 & x_1 & x_2 & x_3
        \end{pmatrix}^t \in \C^4: \; x_0 \sigma_0 + x_1 \sigma_1 +  x_2 \sigma_2 + x_3 \sigma_3 \in S \right\}.
    \end{align*}
    For the purposes of this paper, we call $V$ the \emph{Pauli space} of $\mathcal{G}$.
\end{definition}

\begin{lemma}
\label{pau::lemma:Pauli_spaces_of_isomorphic_quantum_graphs}
    Two quantum graphs $\mathcal{G}_1, \mathcal{G}_2$ on $(M_2, \tau)$ with Pauli spaces $V_1, V_2 \subset \C^4$, respectively, are isomorphic if and only if there is a rotation $R \in \SO(3)$ such that
    \begin{align*}
        \left[\begin{array}{c|c}
            1& 0 \\
            \hline
            0 & R 
            \end{array}\right] V_1 = V_2.
    \end{align*}
    Further, $\mathcal{G}_1$ is loopfree if and only if $V_1 \perp e_1$.
\end{lemma}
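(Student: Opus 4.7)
The plan is to unpack both claims directly from the descriptions already in hand: for the isomorphism part, use Proposition \ref{pre::prop:aut_group_of_(M2,psi)} to translate $\pi \in \mathrm{Aut}(M_2,\tau)$ into its $\SO(3)$-action on the Pauli-coefficient vector; for the loopfree part, use Definition \ref{pre::def:(ir)reflxive_qgraphs}(1) together with the explicit form of $\rho_1$ from Proposition \ref{pre::prop:qsets_on_M2} to identify the relevant KMS orthogonality with the standard orthogonality of Pauli matrices.

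For the first equivalence, I would start by recalling that by Proposition \ref{pre::prop:aut_group_of_(M2,psi)}, $\mathrm{Aut}(M_2,\tau) \cong \SO(3)$, and each $\pi$ acts on an arbitrary $x = x_0\sigma_0 + x_1\sigma_1 + x_2\sigma_2 + x_3\sigma_3 \in M_2$ by fixing the $\sigma_0$-coefficient and applying the corresponding $R \in \SO(3)$ to $(x_1,x_2,x_3)^t$. Since $\{\sigma_0,\sigma_1,\sigma_2,\sigma_3\}$ is a basis of $M_2$, the induced linear action on the coefficient vector $(x_0,x_1,x_2,x_3)^t$ is precisely multiplication by the block matrix $\mathrm{diag}(1,R)$. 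The condition $\pi(S_1) = S_2$ is therefore equivalent to the statement that this block matrix sends $V_1$ bijectively to $V_2$. Running this equivalence in both directions yields the first claim, with the converse using that every $R \in \SO(3)$ actually arises from some $\pi \in \mathrm{Aut}(M_2,\tau)$.

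For the second part, I would unfold Definition \ref{pre::def:(ir)reflxive_qgraphs}(1): $\mathcal{G}_1$ is loopfree if and only if $\mathrm{I}_2 \in S_1^\bot$ with respect to the KMS-inner product induced by $\tau^{-1}$. By Proposition \ref{pre::prop:qsets_on_M2} we have $\rho_1 = 2\mathrm{I}_2$, so this inner product reduces (up to a positive scalar) to $\mathrm{Tr}(x^\ast y)$, i.e.\ the Hilbert--Schmidt inner product. Under this inner product the Pauli matrices are pairwise orthogonal with $\mathrm{Tr}(\sigma_0) = 2$ and $\mathrm{Tr}(\sigma_i) = 0$ for $i=1,2,3$. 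Hence $\langle \mathrm{I}_2, x_0\sigma_0 + x_1\sigma_1 + x_2\sigma_2 + x_3\sigma_3\rangle$ is proportional to $\overline{x_0}$, so $\mathrm{I}_2 \perp S_1$ is equivalent to every coefficient vector in $V_1$ having vanishing first entry, which is exactly $V_1 \perp e_1$.

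I do not expect a genuine obstacle here; the entire argument is a bookkeeping translation between three pictures (the automorphism as a $\ast$-automorphism of $M_2$, the induced action on Pauli coefficients, and the block-matrix form). The only mild care needed is to make sure the indexing convention $(x_0,x_1,x_2,x_3)^t$ matches $e_1$ being the $\sigma_0$-direction, and to verify that the KMS-inner product induced by $\tau^{-1}$ really collapses to a multiple of the trace inner product in the tracial case, so that classical orthogonality of the Pauli basis may be invoked.
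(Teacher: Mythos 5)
Your proposal is correct and follows essentially the same route as the paper: the isomorphism statement is read off from Proposition \ref{pre::prop:aut_group_of_(M2,psi)} together with Definition \ref{pre::def:isomorphism_of_qgraphs}, and the loopfree criterion comes from noting that $\rho_1 = 2\mathrm{I}_2$ makes $\langle\,\cdot\,,\cdot\,\rangle^{KMS}_{\tau^{-1}}$ a multiple of the Hilbert--Schmidt inner product, under which $\sigma_1,\sigma_2,\sigma_3$ are traceless and hence orthogonal to $\mathrm{I}_2$. No gaps.
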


\begin{proof}
    The first statement follows directly from Proposition \ref{pre::prop:aut_group_of_(M2,psi)} and Definition \ref{pre::def:isomorphism_of_qgraphs}.
    By Definition \ref{pre::def:(ir)reflxive_qgraphs} the quantum graph $\mathcal{G}_1$ with quantum edge space $S$ is non-loopfree if and only if $\mathrm{I}_2\perp S$ with respect to the KMS-inner product induced by $\tau^{-1}$. One easily checks
    \begin{align*}
        \langle \mathrm{I}_2,\sigma_i\rangle_{\tau^{-1}}^{\KMS}
        =\frac{1}{2}\Tr\left(
        %\sigma_i^\ast
        %\begin{pmatrix}
        %    1&0\\0&1
        %\end{pmatrix}
        \sigma_i
        \right)=0
    \end{align*}
    for $i=1,2,3$.
    Thus, $\mathcal{G}_1$ is loopfree if and only if $V_1\perp e_1$.
\end{proof}

For nontracial quantum graphs on $(M_2, \psi_q)$ with $q \in (0,1)$ we use a slightly different basis.

\begin{definition}
    \label{pre::def:q_adjusted_Pauli_space}
    Let $\mathcal{G}$ be a quantum graph on $(M_2, \psi_q)$ with quantum edge space $S \subset M_2$. Set
    \begin{align*}
        V^{(q)} := \left\{\begin{pmatrix}
            x_0 & x_1 & x_2 & x_3
        \end{pmatrix}^t \in \C^4: \; x_0 \sigma_0 + x_1 \sigma_1 +  x_2 \sigma_2 + x_3 \sigma_3^{(q)} \in S \right\},
    \end{align*}
    where
    \begin{align*}
        \sigma_3^{(q)} := \begin{pmatrix}
            q^{-2} & 0 \\
            0 & -1
        \end{pmatrix}.
    \end{align*}
    For the purposes of this paper, we call $V^{(q)}$ the \emph{$q$-adjusted Pauli space} of $\mathcal{G}$.
\end{definition}

The advantage of the $q$-adjusted Pauli space is that it is easier to tell from $V^{(q)}$ if the corresponding quantum graph is loopfree.

%the following. The quantum graph $\mathcal{G}$ from the previous definition is loopfree if $\mathrm{I}_2 \perp S$ with respect to the KMS-inner product induced by $\psi_q^{-1}$. Now, one checks that the matrices $\sigma_1, \sigma_2, \sigma_3^{(q)}$ are orthogonal to $\mathrm{I}_2$ with respect to this inner product. Indeed, the claim is clear for $\sigma_1$ and $\sigma_2$, and one checks
%\begin{align*}
%    \langle \mathrm{I}_2, \sigma_3^{(q)} \rangle_{\psi_q^{-1}}^{KMS}
%        = \frac{1}{1+q^2} \mathrm{Tr}\left( \begin{pmatrix}
%            q & 0 \\
%            0 & 1
%        \end{pmatrix}
%        \begin{pmatrix}
%            q^{-2} & 0 \\
%            0 & -1
%        \end{pmatrix}
%        \begin{pmatrix}
%            q & 0 \\
%            0 & 1
%        \end{pmatrix} \right) 
%        = 0.
%\end{align*}
%Thus, the quantum graph $\mathcal{G}$ is loopfree if and only if, $V^{(q)} \perp e_1$ holds with respect to the standard inner product on $\C^4$.

\begin{lemma}
    \label{pau::lemma:q_adjusted_Pauli_spaces_of_isomorphic_nontracial_qgraphs}
    Two quantum graphs $\mathcal{G}_1, \mathcal{G}_2$ on $(M_2, \psi_q)$ for $q \in (0,1)$ with $q$-adjusted Pauli spaces $V_1^{(q)}, V_2^{(q)} \subset \C^4$, respectively, are isomorphic if and only if there is a rotation $R \in \mathrm{SO}(2)$ such that
    \begin{align*}
        \left[\begin{array}{c|c|c}
            1& 0 & 0 \\
            \hline
            0 & R & 0 \\
            \hline 
            0 & 0 & 1
            \end{array}\right]
            V_1^{(q)} = V_2^{(q)}
            \quad\text{ or equivalently }\quad
            \left[\begin{array}{c|c|c}
            1& 0 & 0 \\
            \hline
            0 & R & 0 \\
            \hline 
            0 & 0 & 1
            \end{array}\right]
            V_1 = V_2.
    \end{align*}
    Further, $\mathcal{G}_1$ is loopfree if and only if $V_1^{(q)} \perp e_1$ holds with respect to the standard inner product on $\C^4$.
\end{lemma}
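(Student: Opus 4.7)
The plan is to reduce both claims to the explicit description of $\operatorname{Aut}(M_2, \psi_q) \cong \mathrm{SO}(2)$ from Proposition \ref{pre::prop:aut_group_of_(M2,psi)} together with the loopfreeness criterion of Definition \ref{pre::def:(ir)reflxive_qgraphs}. The only subtlety is that Proposition \ref{pre::prop:aut_group_of_(M2,psi)} phrases the action in the standard Pauli basis, while Definition \ref{pre::def:q_adjusted_Pauli_space} uses $\sigma_3^{(q)}$ in place of $\sigma_3$, so I need to argue that this change is harmless.

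To begin, I would record the change-of-basis identity
$$
\sigma_3^{(q)} = \tfrac{q^{-2}-1}{2}\,\sigma_0 + \tfrac{q^{-2}+1}{2}\,\sigma_3,
$$
so that the matrix $T \in \mathrm{GL}_4(\C)$ relating the coordinates of Definition \ref{pre::def:q_adjusted_Pauli_space} to those of Definition \ref{pau::def:pauli_space_of_qgraph} is the identity on the $(\sigma_1,\sigma_2)$-block and only mixes the $(\sigma_0,\sigma_3)$-block. In particular $T$ commutes with every block matrix of the form $\operatorname{diag}(1,R,1)$ for $R \in \mathrm{SO}(2)$, which immediately gives the equivalence of the two displayed conditions in the lemma once either one is established.

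For the isomorphism claim itself, Definition \ref{pre::def:isomorphism_of_qgraphs} says that $\mathcal{G}_1$ and $\mathcal{G}_2$ are isomorphic iff $\pi(S_1)=S_2$ for some $\pi \in \operatorname{Aut}(M_2,\psi_q)$. By Proposition \ref{pre::prop:aut_group_of_(M2,psi)}, such $\pi$ acts by rotating the $(x_1,x_2)$-components via some $R \in \mathrm{SO}(2)$ while fixing $x_0$ and $x_3$, which is exactly the action of $\operatorname{diag}(1,R,1)$ on the coordinate vectors in $\C^4$. This translates $\pi(S_1)=S_2$ into $\operatorname{diag}(1,R,1)V_1 = V_2$, hence by the previous paragraph into $\operatorname{diag}(1,R,1)V_1^{(q)} = V_2^{(q)}$.

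For the loopfree statement, Definition \ref{pre::def:(ir)reflxive_qgraphs} characterizes loopfreeness as $\Tr(X\rho_q^{-1})=0$ for every $X \in S_1$, where $\rho_q^{-1} = \tfrac{1}{1+q^2}\operatorname{diag}(q^2,1)$. A direct computation gives
$$
\Tr(\sigma_0\rho_q^{-1})=1, \qquad \Tr(\sigma_1\rho_q^{-1}) = \Tr(\sigma_2\rho_q^{-1}) = 0, \qquad \Tr(\sigma_3^{(q)}\rho_q^{-1}) = \tfrac{1-1}{1+q^2}=0;
$$
the last identity is precisely the motivation behind the choice of $\sigma_3^{(q)}$. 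Consequently $\Tr(X\rho_q^{-1}) = x_0$ whenever $X$ is written in the $q$-adjusted Pauli basis, so the loopfree condition collapses to $x_0 = 0$ for every vector in $V_1^{(q)}$, i.e.\ $V_1^{(q)} \perp e_1$. The only real (but still modest) obstacle in the whole argument is the verification that the coordinate change $T$ commutes with the rotation blocks, which is clear because $T$ and $\operatorname{diag}(1,R,1)$ act on disjoint coordinate blocks of $\C^4$.
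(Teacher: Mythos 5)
Your proposal is correct and follows essentially the same route as the paper: the isomorphism claim is read off from the $\mathrm{SO}(2)$-action of $\operatorname{Aut}(M_2,\psi_q)$ in Proposition \ref{pre::prop:aut_group_of_(M2,psi)} together with Definition \ref{pre::def:isomorphism_of_qgraphs}, and the loopfree claim from the vanishing of $\Tr(\sigma_i\rho_q^{-1})$ for $i=1,2$ and of $\Tr(\sigma_3^{(q)}\rho_q^{-1})$, which is exactly the KMS-orthogonality computation in the paper. Your explicit change-of-basis matrix relating $V$ and $V^{(q)}$ and the observation that it commutes with $\operatorname{diag}(1,R,1)$ is a detail the paper leaves implicit, but it is a correct and welcome justification of the ``or equivalently'' in the statement rather than a different approach.
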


\begin{proof}
    The first statement follows again from Proposition \ref{pre::prop:aut_group_of_(M2,psi)} and Definition \ref{pre::def:isomorphism_of_qgraphs}. 
    %Recall from the proof of Lemma \ref{pau::lemma:Pauli_spaces_of_isomorphic_quantum_graphs} $\langle \mathrm{I}_2, \sigma_i \rangle_{\psi_q^{-1}}^{KMS} = 0$ for $i > 0$. 
    For the final claim, observe
    \begin{align*}
    \langle \mathrm{I}_2, \sigma_3^{(q)} \rangle_{\psi_q^{-1}}^{KMS}
        = \frac{1}{1+q^2} \mathrm{Tr}\left( \begin{pmatrix}
            q & 0 \\
            0 & 1
        \end{pmatrix}
        \begin{pmatrix}
            q^{-2} & 0 \\
            0 & -1
        \end{pmatrix}
        \begin{pmatrix}
            q & 0 \\
            0 & 1
        \end{pmatrix} \right) 
        = 0.
    \end{align*}
    Analogously, one checks $\langle \mathrm{I}_2, \sigma_i \rangle_{\psi_q^{-1}}^{KMS} = 0$ for $i =1, 2$.
    Thus, $\mathcal{G}_1$ is loopfree if and only if $V_1^{(q)} \perp e_1$.
\end{proof}

\begin{remark}
    Note that we consider the spaces $V, V^{(q)} \subset \C^4$ associated to a quantum graph $\mathcal{G}$ on $(M_2, \tau)$, $(M_2, \psi_q)$, respectively, as equipped with the standard inner product of $\C^4$. On the other hand, the quantum edge space $S \subset M_n$ is often equipped with the KMS-inner product induced by $\psi^{-1}$, e.g. in the definition of reflexive/loopfree quantum graphs and in the next section in the definition of complement graphs of $\mathcal{G}$.
\end{remark}

\subsection{Complements of Quantum Graphs}
\label{subsec::complements_of_quantum_graphs}
%--------------------------------------------------------------------------------------------------

The aim of this paper is to classify all directed and undirected quantum graphs on $M_2$. For that, it suffices to classify quantum graphs with one or two quantum edges, for the other ones are obtained as complements from these quantum graphs. Let us discuss this in more detail.

In what follows, we describe quantum graphs only by their quantum edge space and do not discuss the related quantum adjacency matrices and quantum edge projections.

\begin{definition}
\label{pre::def:(looopfree)_complement_of_qgraphs}
    Let $\mathcal{G}$ be a quantum graph on the quantum set $(M_n, \psi)$ with quantum edge space $S \subset M_n$. Further, let $S^\perp$ be the orthogonal complement of $S$ with respect to the KMS-inner product induced by $\psi^{-1}$.
    \begin{enumerate}
        \item We call the quantum graph $\mathcal{G}^\perp$ with quantum edge space $S^\perp$ the \emph{complement quantum graph} of $\mathcal{G}$. 
        \item If $\mathcal{G}$ is loopfree, i.e. $\mathrm{I}_n \perp S$, then $\mathcal{G}^\perp$ is reflexive, and we can write 
        $$
        S^\perp = S^\prime \oplus \C \mathrm{I}_n
        $$ 
        as an orthogonal direct sum with respect to the KMS-inner product induced by $\psi^{-1}$. We call the quantum graph $\mathcal{G}^\prime$ with quantum edge space $S^\prime$ the \emph{loopfree complement quantum graph} of $\mathcal{G}$.
    \end{enumerate}
\end{definition}

\begin{remark}
    The complement of the quantum graph $\mathcal{G}$ can equivalently be described in terms of its quantum adjacency matrix. In fact, if $\mathcal{G}$ has quantum adjacency matrix $A: M_n \to M_n$ then $\mathcal{G}^\perp$ has quantum adjacency matrix $J - A$, where $J$ is the quantum adjacency matrix of the complete quantum graph. 
    
    To see this, let $\{X_i\} \cup \{Y_j\}$ be an ONB of $M_n$ with respect to the KMS-inner product induced by $\psi^{-1}$ such that $\{X_i\}$ is an ONB of $S$, where $S$ is the quantum edge space of $\mathcal{G}$. Then, according to Proposition \ref{pre::prop:S->A} the quantum adjacency matrix $A^\perp$ of $\mathcal{G}^\perp$ is given by
    \begin{align*}
        A^\perp: x 
            &\mapsto \sum_j\rho^{-\frac{1}{4}}Y_j\rho^{\frac{1}{4}}x\rho^{\frac{1}{4}}Y_j^*\rho^{-\frac{1}{4}} \\
            &= \left(\sum_i\rho^{-\frac{1}{4}}X_i\rho^{\frac{1}{4}}x\rho^{\frac{1}{4}}X_i^*\rho^{-\frac{1}{4}} + \sum_j\rho^{-\frac{1}{4}}Y_j\rho^{\frac{1}{4}}x\rho^{\frac{1}{4}}Y_j^*\rho^{-\frac{1}{4}} \right) \\
            &\quad - \sum_i\rho^{-\frac{1}{4}}X_i\rho^{\frac{1}{4}}x\rho^{\frac{1}{4}}X_i^*\rho^{-\frac{1}{4}} \\
            &= J(x) - A(x).
    \end{align*}
    Let us mention that such constructions were used by Gromada in \cite[Example 1.18]{gromada_examples_2022}.
\end{remark}

\begin{lemma}
\label{pre::lemma:1-1-correspondence_between_qgraphs_with_k_and_n2-k(-1)_edges}
    With the notation from the previous Definition \ref{pre::def:(looopfree)_complement_of_qgraphs} we have the following.
    \begin{enumerate}
        \item The assignment $\mathcal{G} \mapsto \mathcal{G}^\perp$ defines a 1-1 correspondence between the isomorphism classes of quantum graphs with $k$ quantum edges and quantum graphs with $n^2-k$ quantum edges on $(M_n, \psi)$ for all $k=0, \dots, n^2$.
        \item The assignment $\mathcal{G} \mapsto \mathcal{G}^\prime$ defines a 1-1 correspondence between the isomorphism classes of loopfree quantum graphs with $k$ quantum edges and loopfree quantum graphs with $n^2-k-1$ quantum edges on $(M_n, \psi)$ for all $k = 0, \dots, n^2-1$.
    \end{enumerate}
\end{lemma}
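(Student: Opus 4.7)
The plan is to show that the complement operations are well-defined involutions on isomorphism classes that shift the quantum edge count as indicated. The dimension bookkeeping is immediate from linear algebra, so the content lies in showing that isomorphism classes are preserved. The key observation is that any automorphism $\pi \in \mathrm{Aut}(M_n, \psi)$, written as $\pi(x) = UxU^\ast$, satisfies $U\rho = \rho U$ (as derived in the proof of Definition \ref{pre::def:isomorphism_of_qgraphs}), and therefore $\pi$ is an isometry with respect to the KMS-inner product induced by $\psi^{-1}$. This is the main computation to carry out; everything else will follow formally.

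For part (1), I would first observe that taking orthogonal complements is an involution on subspaces of $M_n$ and that $\dim S^\perp = n^2 - \dim S$, so $\mathcal{G} \mapsto \mathcal{G}^\perp$ swaps the classes of quantum graphs with $k$ and $n^2 - k$ quantum edges at the level of quantum edge spaces. To descend to isomorphism classes, I would verify that whenever $\pi(S_1) = S_2$ for some $\pi \in \mathrm{Aut}(M_n, \psi)$, also $\pi(S_1^\perp) = S_2^\perp$. This follows immediately from the KMS-isometry property of $\pi$ noted above, since isometries map orthogonal complements to orthogonal complements. Well-definedness together with the involution property yields the claimed bijection.

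For part (2), the argument is a refinement of part (1). If $\mathcal{G}$ is loopfree, then $\mathrm{I}_n \in S^\perp$, so the decomposition $S^\perp = S^\prime \oplus \C \mathrm{I}_n$ is forced and orthogonal, with $\dim S^\prime = n^2 - k - 1$. Moreover $\mathrm{I}_n \perp S^\prime$ by construction, so $\mathcal{G}^\prime$ is loopfree. The involution property follows because $(S^\prime)^\perp = S \oplus \C \mathrm{I}_n$ with $S \perp \C \mathrm{I}_n$, so the loopfree complement of $\mathcal{G}^\prime$ is again $\mathcal{G}$. For well-definedness on isomorphism classes, I use that every $\ast$-automorphism $\pi$ satisfies $\pi(\mathrm{I}_n) = \mathrm{I}_n$; combined with the KMS-isometry property, $\pi$ preserves the orthogonal decomposition $S^\perp = S^\prime \oplus \C \mathrm{I}_n$, so $\pi(S_1) = S_2$ entails $\pi(S_1^\prime) = S_2^\prime$.

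The main obstacle is really just the KMS-isometry verification for automorphisms of $(M_n, \psi)$; once that is established, both assertions reduce to standard facts about orthogonal decompositions of finite-dimensional Hilbert spaces. I expect the whole proof to fit in a short paragraph per part.
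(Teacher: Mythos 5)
Your proposal is correct and follows essentially the same route as the paper: both arguments reduce to the fact that an automorphism of $(M_n,\psi)$ preserves the KMS-inner product induced by $\psi^{-1}$ (established in the proof of well-definedness of Definition \ref{pre::def:isomorphism_of_qgraphs}) and hence maps orthogonal complements to orthogonal complements, with part (2) additionally using $\pi(\mathrm{I}_n)=\mathrm{I}_n$ to preserve the decomposition $S^\perp = S^\prime \oplus \C\mathrm{I}_n$. No gaps.
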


\begin{proof}
    Ad (1). Evidently, if $\mathcal{G}$ has $k$ quantum edges, then $\mathcal{G}^\perp$ has $n^2-k$ quantum edges, and the assignment $\mathcal{G} \mapsto \mathcal{G}^\perp$ is bijective. Thus, it remains to show that the mapping preserves isomorphism classes. For that, assume $\mathcal{G}_1 \cong \mathcal{G}_2$ for two quantum graphs with quantum edge spaces $S_1, S_2 \subset M_n$, respectively. Then there is an automorphism $\pi: M_n \to M_n$ of the quantum set $(M_n, \psi)$ such that $\pi(S_1) = S_2$. It follows directly that $\pi(S_1^\perp) = \pi(S_2^\perp)$ since $\pi$ preserves the KMS-inner product induced by $\psi^{-1}$ (see the proof of Definition \ref{pre::def:isomorphism_of_qgraphs}). Hence, $\pi$ is an isomorphism of $\mathcal{G}_1^\perp$ and $\mathcal{G}_2^\perp$.

    Ad (2). As before the claim about the number of quantum edges is evident. It remains to prove that $\mathcal{G} \mapsto \mathcal{G}^\prime$ preserves isomorphism classes. Let $\mathcal{G}_1 \cong \mathcal{G}_2$ and $\pi$ as above, and assume that the two quantum graphs are loopfree. Since $\pi(\mathrm{I}_n) = \mathrm{I}_n$, one obtains from $\pi(S_1^\perp) = S_2^\perp$ immediately
    \begin{align*}
        \pi(S_1^\prime) \oplus \C \mathrm{I}_n
            = \pi(S_1^\prime \oplus \C \mathrm{I}_n) 
            = \pi(S_1^\perp)
            = S_2^\perp
            = S_2^\prime \oplus \C\mathrm{I}_n,
    \end{align*}
    where all direct sums are orthogonal.
    Consequently, $\pi(S_1^\prime) = S_2^\prime$ and $\mathcal{G}_1^\prime \cong \mathcal{G}_2^\prime$.
\end{proof}

\begin{remark}
    \label{iso::remark:complete_classification}
    In view of the previous lemma, for a complete classification of quantum graphs on $(M_2, \psi)$ it suffices to classify quantum graphs with one quantum edge and non-loopfree quantum graphs with two quantum edges. Indeed, there is only one quantum graph with zero quantum edges, namely the empty quantum graph with quantum edge space $S = \{0\}$. Further, all loopfree quantum graphs with two quantum edges are by Lemma \ref{pre::lemma:1-1-correspondence_between_qgraphs_with_k_and_n2-k(-1)_edges} of the form $\mathcal{G}^\prime$ where $\mathcal{G}$ is a loopfree quantum graph with one quantum edge. Next, all quantum graphs with three quantum edges are of the form $\mathcal{G}^\perp$ for quantum graphs $\mathcal{G}$ with one quantum edge. Finally, there is only one quantum graph with four quantum edges, namely the complete quantum graph.
\end{remark}

\section{Classification of Lines in \texorpdfstring{$\C^2$}{C2} and \texorpdfstring{$\C^3$}{C3}}
\label{sec::classification_of_lines}
%-------------------------------------------------------------------------
%-------------------------------------------------------------------------

%As explained in Section \ref{sec::quantum_edge_spaces_and_Pauli_matrices} the classification of quantum graphs on $M_2$ boils down to a classification of lines and planes in $\C^4$ up to certain orthogonal transformations. 
In this section we prepare later work  by classifying lines in $\C^2$ and $\C^3$ up to particular orthogonal transformations. 

\subsection{Lines in \texorpdfstring{$\C^2$}{C2} and the M\"obius transformation}
%-----------------------------------------------------------------------------------------
\label{lin::subsec:lines_in_C2}

In this section we classify the lines in $\C^2$ up to a rotation $R \in \mathrm{SO}(2)$ and up to an orthogonal transformation $R \in \mathrm{O}(2)$, respectively.
The central tool for this is the Möbius transformation which defines an action of $\mathrm{GL}_2(\C)$ on $\hat{\C} \vcentcolon = \C \cup \{\infty\}$ via 
\begin{align*}
    \begin{pmatrix}
        a & b \\
        c & d
    \end{pmatrix}
    \cdot z = \frac{az+b}{cz+d}. 
\end{align*}
We will use the following two facts:
\begin{enumerate}
    \item To any nonzero vector $u = \begin{pmatrix}
        z_1 & z_2
    \end{pmatrix}^t \in \C^2$ we may associate the number 
    $$
    \varphi(u) := \frac{z_1}{z_2}
    $$
    in the complex sphere $\hat{\C}$. This number completely describes the line $\spann \{u\}$, and for any  two nonzero vectors $u_1, u_2 \in \C^2$ one has 
    \begin{align*}
        u_1 \in \spann\{u_2\} 
        \quad \Leftrightarrow \quad
        \varphi(u_1) = \varphi(u_2).
    \end{align*}
    \item For any matrix $X \in \mathrm{GL}_2(\C)$ and vector $u \in \C^2$ one has
    \begin{align*}
        \varphi(X u) = X \cdot \varphi(u).
    \end{align*}
\end{enumerate}

\begin{lemma}
    \label{lin::lemma:lines_in_C2_via_Möbius_trafo}
    Let $v \in \C^2 \setminus \{0\}$.
    \begin{enumerate}
        \item There is a unique $\beta \in [-1, 1]$ such that
        \begin{align*}
            Rv \in \spann\left\{ \begin{pmatrix}
                1 \\ i \beta
            \end{pmatrix}\right\}
        \end{align*}
        holds for some $R \in \SO(2)$.
        \item There is a unique $\beta \in [0,1]$ such that
        \begin{align*}
            Rv \in \spann\left\{ \begin{pmatrix}
                1 \\ i \beta
            \end{pmatrix}\right\}
        \end{align*}
        holds for some $R \in \mathrm{O}(2)$.
    \end{enumerate}
\end{lemma}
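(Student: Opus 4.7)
The plan is to work in the Möbius picture introduced just above the lemma: the line $\spann\{v\}$ is completely encoded by $\varphi(v) \in \hat{\C}$, and the condition $Rv \in \spann\{(1, i\beta)^t\}$ becomes $R \cdot \varphi(v) = \varphi((1, i\beta)^t) = 1/(i\beta)$, with $\beta = 0$ interpreted as $\infty$. Thus I reduce the problem to classifying the orbits of $\SO(2)$, respectively $\mathrm{O}(2)$, acting on $\hat{\C}$ by Möbius transformations, and to checking that the target set $\{1/(i\beta) : \beta \in [-1, 1]\} \cup \{\infty\}$ meets each orbit in exactly one point (resp.\ in one point after one further identification for $\mathrm{O}(2)$).

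To analyze the $\SO(2)$-action, the key step is to diagonalize it by conjugation. Solving $R_\theta \cdot z = z$ shows that the fixed points on $\hat{\C}$ are exactly $\pm i$, so I would conjugate by the Möbius map $\sigma(z) = (z - i)/(z + i)$, which sends $i \mapsto 0$ and $-i \mapsto \infty$. Using $\cos\theta - i\sin\theta = e^{-i\theta}$ and $\sin\theta + i\cos\theta = i e^{-i\theta}$, a direct calculation yields
\begin{align*}
\sigma(R_\theta \cdot z) = e^{-2i\theta}\, \sigma(z),
\end{align*}
so in the coordinate $w = \sigma(z)$ the group $\SO(2)$ acts by rotation about the origin. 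Consequently the orbits are the circles $\{|w| = r\}$ for $r \in [0, \infty]$, and the nonnegative real axis $[0, \infty] \subset \hat{\C}$ is a fundamental domain meeting each orbit in exactly one point.

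To conclude part (1) I would parametrize this fundamental domain by $\beta$: a short computation gives $\sigma(1/(i\beta)) = (1 + \beta)/(1 - \beta)$, with the natural conventions $\sigma(\infty) = 1$ (covering $\beta = 0$), $0/2 = 0$ (covering $\beta = -1$, corresponding to $z = i$), and $2/0 = \infty$ (covering $\beta = 1$, corresponding to $z = -i$). Since $\beta \mapsto (1+\beta)/(1-\beta)$ is strictly increasing from $0$ to $\infty$ on $[-1, 1]$, it is a bijection onto the fundamental domain, and this simultaneously gives existence and uniqueness of $\beta$.

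For part (2) I would extend the analysis by one coset representative of $\mathrm{O}(2) / \SO(2)$, e.g.\ $J = \diag(1, -1)$. As a Möbius map $J$ acts on $\hat{\C}$ by $z \mapsto -z$, and another short check gives $\sigma(-z) = 1/\sigma(z)$. Hence in the $w$-coordinate the reflection $J$ acts by $w \mapsto 1/w$, so on the fundamental domain $[0, \infty]$ it imposes the identification $r \sim 1/r$. Translated through $\beta \mapsto (1+\beta)/(1-\beta)$, this is precisely the identification $\beta \sim -\beta$, which yields the unique representative $\beta \in [0, 1]$. The only delicate point throughout is the bookkeeping at the three boundary cases $\beta = \pm 1$ (the fixed points $\pm i$) and $\beta = 0$ (the point $\infty$), but these are all handled uniformly by the Möbius coordinate $w$.
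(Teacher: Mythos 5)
Your proposal is correct and follows essentially the same route as the paper: the paper also passes to the Möbius action on $\hat{\C}$, conjugates by the map $z \mapsto (z-i)/(z+i)$ (written there as the matrix $P = \left(\begin{smallmatrix} 1 & -i \\ 1 & i \end{smallmatrix}\right)$) to turn $\SO(2)$ into rotations $w \mapsto e^{2it}w$, identifies the orbits with circles $|w|=r$ parametrized by $(1+\beta)/(1-\beta)$, and handles $\mathrm{O}(2)$ via the extra reflection acting as $w \mapsto 1/w$, i.e.\ $\beta \mapsto -\beta$. The only differences are notational (you work with the conjugating Möbius map directly rather than with the matrix $P$), so there is nothing to add.
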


\begin{proof}
    Due to the properties of the Möbius transformation discussed above, we have
    \begin{align*}
        Rv \in \spann \{ v_\beta \}
        \quad \Leftrightarrow\quad 
        R \cdot \varphi(v) = \varphi(v_\beta)
    \end{align*}
    for all $R \in \mathrm{O}(2)$ and $v_\beta := \begin{pmatrix}
        1 & i \beta
    \end{pmatrix}^t\in\C^2$.
    Hence, to prove the two statements we need to consider the orbits of the action of $\mathrm{SO}(2)$ and $\mathrm{O}(2)$, respectively, on $\hat{\C}$ given by the M\"obius transformation.

    Ad (1). Any special orthogonal matrix $R \in \SO(2)$ is of the form
    \begin{align*}
        R = \begin{pmatrix}
            a   & b \\
            -b & a
        \end{pmatrix}
    \end{align*}
    for some $a+ib \in S^1$ with $a, b \in \R$. For the matrix
    \begin{align*}
        P := \begin{pmatrix}
            1 & -i \\
            1 & i
        \end{pmatrix}
    \end{align*}
    one has
    \begin{align*}
        P \begin{pmatrix}
            a   & b \\
            -b & a
        \end{pmatrix}
        P^{-1} 
        = \begin{pmatrix}
            a +ib  & 0 \\
            0 & a-ib
        \end{pmatrix}.
    \end{align*}
    Writing $a+ib = e^{it}$, it follows for all $z \in \hat{\C}$
    \begin{align*}
        \left(P \begin{pmatrix}
            a   & b \\
            -b & a
        \end{pmatrix}
        P^{-1}\right)
        \cdot z = \frac{e^{it}z}{e^{-it}} = e^{2it}z.
    \end{align*}
    Using that, we get for any two $z_1, z_2 \in \hat{\C}$
    \begin{align*}
         &R \cdot z_1 = z_2 \quad \text{ for some } R \in \SO(2)\\
        \Leftrightarrow \qquad & (P R P^{-1}) \cdot(P \cdot z_1) = P \cdot z_2  \quad \text{ for some } R \in \SO(2) \\
        \Leftrightarrow \qquad & e^{2it} (P \cdot z_1) = P \cdot z_2 \quad \text{ for some } t \in \R\\
        \Leftrightarrow \qquad & |P \cdot z_1| = |P \cdot z_2|,
    \end{align*}
    where we use the convention $|\infty| = \infty$. Thus, the orbits of $\SO(2) \curvearrowright \hat{\C}$ are exactly
    \begin{align*}
        O_r = \{P^{-1} \cdot z: |z| = r\} \quad \text{ with } r \in [0, \infty].
    \end{align*}
    A system of representatives is given by the numbers $-i \beta^{-1}$ with $\beta \in [-1, 1]$. Indeed, we have
    \begin{align*}
        - i \beta^{-1} = P^{-1} \cdot (P \cdot (-i \beta^{-1})) 
        \quad \text{ with } \quad 
        P \cdot (-i \beta^{-1}) = \frac{-i \beta^{-1} - i}{-i \beta^{-1}+i} = \frac{1+\beta}{1-\beta},
    \end{align*}
    and the latter term defines a bijective map from $[-1,1]$ to $[0, \infty]$. Since for the vector $v_\beta := \begin{pmatrix}
        1 & i \beta
    \end{pmatrix}^t \in \C^2$ we have $\varphi(v_\beta) = \frac{1}{i \beta} = - i \beta^{-1}$ the statement follows.

    Ad (2). Any matrix in $\mathrm{O}(2)$ is of the form
    \begin{align*}
        \begin{pmatrix}
            -1 & 0 \\
            0 & 1
        \end{pmatrix}^\varepsilon R
        \quad \text{ with } \quad 
        \varepsilon \in \{0,1\}, R \in \SO(2).
    \end{align*}
    In order to classify the orbits of the action $\mathrm{O}(2) \curvearrowright \hat{\C}$ given by the M\"obius transformation we, therefore, need only find out which orbits $O_r$ of $\SO(2) \curvearrowright \hat{\C}$ are linked by the matrix $\mathrm{diag}(-1, 1)$. For any $z \in \hat{\C}$ one has 
    \begin{align*}
        \left(P \begin{pmatrix}
            -1 & 0 \\
            0  & 1
        \end{pmatrix} P^{-1}\right) \cdot z
            = \begin{pmatrix}
                0 & -1 \\
                -1 & 0
            \end{pmatrix}
            \cdot z = \frac{-1}{-z} = \frac{1}{z}.
    \end{align*}
    Hence, for any $z_1, z_2 \in \hat{\C}$ it is
    \begin{align*}
        &\mathrm{diag}(-1, 1) \cdot z_1 = z_2 \\
        \Leftrightarrow \quad & \left(P \mathrm{diag}(-1, 1) P^{-1}\right) \cdot (P \cdot z_1) = P \cdot z_2 \\
        \Leftrightarrow \quad & (P \cdot z_1)^{-1} = P \cdot z_2,
    \end{align*}
    and for any $z \in \hat{\C}$ the numbers $P^{-1} \cdot z$ and $P^{-1} \cdot z^{-1}$ are in the same orbit of $\mathrm{O}(2) \curvearrowright \hat{\C}$. Consequently, the orbits of this action are exactly
    \begin{align*}
        O^\prime_r := \{P^{-1} \cdot z: |z| \in \{r, r^{-1}\}\} \quad \text{ with } r \in [1,\infty].
    \end{align*}
    A system of representatives is given by $\{-i \beta^{-1}: \beta \in [0,1]\}$ since for $\beta \in [0,1]$ the term
    \begin{align*}
        P \cdot (-i \beta^{-1}) = \frac{1+\beta}{1-\beta}
    \end{align*}
    ranges over $[1, \infty]$. The statement follows in the same way as above.
\end{proof}

The previous proof has a very visual interpretation: Every line $\spann \{u\}$ with $u \in \C^2$ corresponds to a point on the Riemann sphere. After applying the transformation $P$, the action of $\SO(2) \cong S^1$ corresponds to spinning this sphere around the axis through the points $0$ and $\infty$. The group $\mathrm{O}(2)$ is obtained from $\SO(2)$ by adding the generator $\mathrm{diag}(-1, 1)$ whose action on the sphere swaps the upper and lower half sphere.

\subsection{Vectors in \texorpdfstring{$\C^2$}{C2} and \texorpdfstring{$\C^3$}{C3} up to a real rotation}
%-----------------------------------------------------------------------------------------
%
% Building on the previous section we find a canonical form for vectors in $\C^2$ in $\C^3$ up to a real rotation. 
% Indeed, after applying a suitable rotation every line in $\C^3$ is in the complement of $e_3$, and then we can view it as a line in the two-dimensional subspace $\spann\{e_1, e_2\} \cong \C^2$. 
In this section, we prepare the lemmas which will be crucial to our later classification of quantum graphs. Building on the previous section, we find canonical spanning vectors of lines in $\C^3$ and $\C^2$ up to a real rotation.
For later purposes % we discuss explicitly a system of representative for the lines in $\C^3$, and
we also list all real rotations which preserve these canonical vectors.

\begin{lemma}
    \label{lin::lemma:existence_of_index_beta_for_lines_in_C3}
        For every non-vanishing vector $v \in \C^3$ there exists some $\lambda \in \C$ and $R \in \SO(3)$ such that one has
        \begin{align*}
            \lambda R v = \begin{pmatrix}
                1 \\ i \beta \\ 0
            \end{pmatrix}
        \end{align*}
        for some $\beta \in [0,1]$. Similarly, for every non-vanishing $v \in \C^2$ there exists some $\lambda \in \C$ and $R \in \SO(2)$ such that
        \begin{align*}
            \lambda R v = \begin{pmatrix}
                1 \\ i \beta
            \end{pmatrix}
        \end{align*}
        holds for some $\beta \in [-1,1]$.
\end{lemma}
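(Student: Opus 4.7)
The plan is to treat the $\C^2$ case as an immediate consequence of Lemma \ref{lin::lemma:lines_in_C2_via_Möbius_trafo}, and reduce the $\C^3$ case to the $\C^2$ case by first using a real rotation to kill the third coordinate.

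For $v \in \C^2 \setminus \{0\}$, part (1) of Lemma \ref{lin::lemma:lines_in_C2_via_Möbius_trafo} produces $R \in \SO(2)$ and $\beta \in [-1,1]$ with $Rv \in \spann\{(1, i\beta)^t\}$. Writing $Rv = \mu (1, i\beta)^t$ and setting $\lambda = \mu^{-1}$ finishes this half.

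For $v \in \C^3 \setminus \{0\}$, I would decompose $v = a + i b$ with $a,b \in \R^3$. Since the real subspace $\spann_\R\{a,b\} \subset \R^3$ has dimension at most $2$, it admits a unit normal vector $u \in \R^3$. Picking $R_1 \in \SO(3)$ with $R_1 u = e_3$ guarantees $R_1 a, R_1 b \in e_3^\perp$, so $R_1 v$ lies in $\C^2 \oplus \{0\}$. Applying the $\C^2$ result to the first two components yields $R_2 \in \SO(2)$, $\beta^\prime \in [-1,1]$, and $\lambda \in \C$ with $\lambda R_2 (R_1 v)|_{\C^2} = (1, i\beta^\prime)^t$. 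Extending $R_2$ to $\widetilde R_2 \in \SO(3)$ acting as the identity on $e_3$ and composing gives $\lambda \widetilde R_2 R_1 v = (1, i\beta^\prime, 0)^t$.

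It then remains to narrow $\beta^\prime \in [-1,1]$ down to $\beta \in [0,1]$. Here I would exploit that $\diag(1,-1,-1) \in \SO(3)$ acts on $(1, i\beta^\prime, 0)^t$ by flipping the sign of the middle coordinate, so postcomposing with this matrix whenever $\beta^\prime < 0$ replaces $\beta^\prime$ by $|\beta^\prime|$. This sign-flip rotation is \emph{not} in the image of $\SO(2)$ under the embedding $R_2 \mapsto R_2 \oplus 1$; this is precisely what lets the parameter range shrink from $[-1,1]$ to $[0,1]$ in the three-dimensional setting, and is the only real subtlety of the argument. The main thing to be careful about is this accounting of orbits, i.e.\ tracking how the symmetry group enlarges from $\SO(2)$ to $\SO(3)$ when passing from lines in $\C^2$ to lines in $\C^3$.
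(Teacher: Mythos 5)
Your proof is correct. It follows the same overall architecture as the paper's (reduce to $\C^2$ and invoke Lemma \ref{lin::lemma:lines_in_C2_via_Möbius_trafo}), but differs in two concrete ways. First, you kill the third coordinate in one step by rotating the unit normal of $\spann_\R\{\mathrm{Re}(v),\mathrm{Im}(v)\}$ to $e_3$, whereas the paper does it in two: a rotation $R_1$ aligning $\mathrm{Re}(v)$ with $e_1$, followed by an $\SO(2)$-rotation of the last two coordinates of $R_1\,\mathrm{Im}(v)$; the paper's route has the side effect of putting $v$ into the normal form $(a+ib,\, ic,\, 0)^t$, which your argument does not need. Second, you invoke part (1) of Lemma \ref{lin::lemma:lines_in_C2_via_Möbius_trafo} (the $\SO(2)$ classification, giving $\beta'\in[-1,1]$) and then fold $[-1,1]$ onto $[0,1]$ with the explicit element $\diag(1,-1,-1)\in\SO(3)$, whereas the paper invokes part (2) (the $\mathrm{O}(2)$ classification, giving $\beta\in[0,1]$ directly) and then repairs the determinant by appending a sign $\kappa=\det(R_3)$ in the third slot. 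These are two ways of accounting for the same enlargement of the symmetry group, and your version makes the orbit-folding mechanism slightly more transparent; the paper's version keeps all the case analysis inside the Möbius lemma. Your identification of the sign-flip as the one genuine subtlety is accurate, and your verification that $\diag(1,-1,-1)$ and $R_2\oplus 1$ lie in $\SO(3)$ closes the argument.
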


\begin{proof}
    Let $v = x + iy$ for real vectors $x, y \in \R^3$. Clearly, there is a rotation $R_1 \in \SO(3)$ such that $R_1 x \in \spann\{e_1\}$. Let $R_1 y =: \begin{pmatrix}
        y_1 & y_2 & y_3
    \end{pmatrix}^t$. Then there is a rotation $R_2 \in \SO(2)$ which sends $\begin{pmatrix}
        y_2 & y_3
    \end{pmatrix}^t$ into $\spann\left\{ \begin{pmatrix}
        1 & 0
    \end{pmatrix}^t\right\}$. Consequently, we have
    \begin{align*}
        \left[\begin{array}{c|c}
            1 & 0 \\ \hline
            0 & R_2
        \end{array}\right] R_1 v = \begin{pmatrix}
            a + i b \\ ic \\ 0
        \end{pmatrix}
    \end{align*}
    for some real numbers $a, b, c \in \R$. Finally, by Lemma \ref{lin::lemma:lines_in_C2_via_Möbius_trafo}(2) there exists some orthogonal matrix $R_3 \in \mathrm{O}(2)$ and $\beta \in [0,1]$ such that
    \begin{align*}
        R_3 \begin{pmatrix}
            a + i b \\ ic
        \end{pmatrix}
        \in \spann\left\{ \begin{pmatrix}
            1 \\ i \beta
        \end{pmatrix} \right\}.
    \end{align*}
    By choosing $\kappa \in \{1, -1\}$ and $\lambda \in \C$ suitably we obtain 
    \begin{align*}
        \left[\begin{array}{c|c}
            R_3 & 0 \\ \hline
            0 & \kappa
        \end{array}\right] \in \SO(3)
        \quad \text{ and } \quad 
        \lambda \left[\begin{array}{c|c}
            R_3 & 0 \\ \hline
            0 & \kappa
        \end{array}\right] \left[\begin{array}{c|c}
            1 & 0 \\ \hline
            0 & R_2
        \end{array}\right] R_1 v 
        = \begin{pmatrix}
            1 \\ i \beta \\ 0
        \end{pmatrix}.
    \end{align*}
    The last statement follows directly from Lemma \ref{lin::lemma:lines_in_C2_via_Möbius_trafo}.
\end{proof}

\begin{lemma}
    \label{lin::lemma:uniqueness_of_index_beta_for_lines_in_C3}
    For $\beta, \beta^\prime \in [0,1], \lambda \in \C$ and $R \in \SO(3)$ we have
    \begin{align}
    \label{dir::eq:beta_beta_prime_are_index_of_the_same_line}
        \lambda R \begin{pmatrix}
            1 \\ i \beta \\ 0
        \end{pmatrix}
        = \begin{pmatrix}
            1 \\ i \beta^\prime \\ 0
        \end{pmatrix}
    \end{align}
    if and only if $\beta = \beta^\prime$ and one of the following is true. 
    \begin{enumerate}[label=(\alph*)]
        \item
        \label{lin::item:beta_beta_prime_are_index_of_the_same_line:a}
        $\beta = 1, \lambda = x + iy \in S^1$ ($x,y\in\R$) and $\displaystyle R = \begin{pmatrix}
            x & -y & 0 \\
            y & x & 0 \\
            0 & 0 & 1
        \end{pmatrix}$,
        \item 
        \label{lin::item:beta_beta_prime_are_index_of_the_same_line:b}
        $\beta \in (0,1), \lambda = \pm 1$ and $\displaystyle R = \begin{pmatrix}
            \lambda & 0 & 0 \\
            0 & \lambda & 0 \\
            0 & 0 & 1
        \end{pmatrix}$,
        %\item $\beta \in (0,1), \lambda = -1$ and $\displaystyle R = \begin{pmatrix}
        %    -1 & 0 & 0 \\
        %    0 & -1 & 0 \\
        %    0 & 0 & 1
        %\end{pmatrix}$,
        \item 
        \label{lin::item:beta_beta_prime_are_index_of_the_same_line:c}
        $\beta = 0, \lambda = \pm 1$ and $\displaystyle R = \left[\begin{array}{c|c}
            \lambda & 0 \\ \hline
            0 & Q
        \end{array}\right]$
        for some $Q \in \mathrm{O}(2)$ with $\mathrm{det}(Q) = \lambda$.
        %\item $\beta = 0, \lambda = -1$ and $\displaystyle R = \left[\begin{array}{c|c}
        %    -1 & 0 \\ \hline
        %    0 & Q
        %\end{array}\right]$
        %for some $Q \in \mathrm{O}(2)$ with $\mathrm{det}(Q) = -1$.
    \end{enumerate}
\end{lemma}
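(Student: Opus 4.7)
The plan is to handle the ``if'' direction by a short direct verification: in each of the three cases, substituting the specified $\lambda$ and $R$ into $\lambda R (1, i\beta, 0)^t$ and expanding gives back $(1, i\beta, 0)^t$, with the constraint $x^2 + y^2 = 1$ in case (a) making the computation trivial.

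For the ``only if'' direction, I would set $\mu := \lambda^{-1} = a + ib$ with $a, b \in \R$ and rewrite \eqref{dir::eq:beta_beta_prime_are_index_of_the_same_line} as $R v_\beta = \mu v_{\beta'}$, where $v_\beta := (1, i\beta, 0)^t$. Since $R$ is real, separating real and imaginary parts of both sides yields
\begin{align*}
    R e_1 &= (a, -b\beta', 0)^t, \\
    \beta R e_2 &= (b, a\beta', 0)^t.
\end{align*}
These determine the first two columns of $R$; the rest of the analysis consists in applying the orthonormality of the columns of $R$ and the constraint $\det R = 1$, combined with a case distinction on $\beta$.

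If $\beta = 0$, the second equation gives $b = 0$ and $a\beta' = 0$. Since $\mu \neq 0$, $a \neq 0$, forcing $\beta' = 0$; then $|Re_1| = 1$ yields $a = \pm 1 = \lambda$. The last two columns of $R$ must form an orthonormal basis of $\mathrm{span}\{e_2,e_3\}$, and the determinant condition pins down $\det Q = \lambda$, producing case (c). If $\beta \neq 0$, then $R e_2 = \beta^{-1}(b, a\beta', 0)^t$, and the orthogonality $\langle Re_1, Re_2\rangle = 0$ simplifies to $ab(1-(\beta')^2)/\beta = 0$, splitting into three sub-cases. In the sub-case $b = 0$, norm conditions give $a = \pm 1$ and $\beta' = \beta$, yielding $R = \mathrm{diag}(\lambda,\lambda,1)$ as in (b) (for $\beta \in (0,1)$, with $\beta = 1$ absorbed into (a)). In the sub-case $a = 0$, the norm conditions force $|b\beta'|=1$ and $|b|=\beta$, so $\beta\beta' = 1$ and hence $\beta = \beta' = 1$, giving the special instances $x = 0, y = \pm 1$ of case (a). In the sub-case $\beta' = 1$, the norm of $R e_2$ becomes $1/\beta$, forcing $\beta = 1$; then $a^2 + b^2 = 1$, and computing $Re_3 = Re_1 \times Re_2$ gives $e_3$, producing case (a) after relabeling $(x,y) = (a,-b)$.

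The main obstacle is the bookkeeping at the boundary values $\beta \in \{0, 1\}$: one must verify that the sub-cases $b = 0$, $a = 0$, and $\beta' = 1$ together exhaust exactly the matrices listed in case (a) when $\beta = 1$, that no spurious sign combinations arise from the $\det R = 1$ condition, and that the case distinction on $\beta \in \{0\}$, $(0,1)$, $\{1\}$ cleanly partitions the solutions into (c), (b), (a), respectively — establishing both existence of the claimed form and uniqueness of $\beta$.
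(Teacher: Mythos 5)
Your proposal is correct and takes essentially the same route as the paper: both arguments separate real and imaginary parts of $\lambda R(1,i\beta,0)^t=(1,i\beta',0)^t$, use orthogonality of the columns of $R$ together with $\det R=1$ to reduce to the key relation $ab(1-\beta^2)=0$ (equivalently $ab(1-(\beta')^2)=0$), and then split into the sub-cases $b=0$, $a=0$, $\beta=1$ to land in (a), (b), (c). The only cosmetic difference is that you read off the columns $Re_1$ and $\beta Re_2$ directly from the equation before imposing orthonormality, whereas the paper first constrains the shape of $R$ and then solves for $\lambda$; the case bookkeeping at $\beta\in\{0,1\}$ works out exactly as you describe.
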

    
\begin{proof}
    First, it is a simple calculation to check that \ref{lin::item:beta_beta_prime_are_index_of_the_same_line:a}, \ref{lin::item:beta_beta_prime_are_index_of_the_same_line:b} and \ref{lin::item:beta_beta_prime_are_index_of_the_same_line:c} entail (\ref{dir::eq:beta_beta_prime_are_index_of_the_same_line}). Write 
    \begin{align*}
        R = \begin{pmatrix}
            a & b & * \\
            c & d & * \\
            e & f & *
        \end{pmatrix}
    \end{align*}
    with $a, b, c, d, e, f \in \R$. Then (\ref{dir::eq:beta_beta_prime_are_index_of_the_same_line}) reads
    \begin{align} \label{dir::eq:lambda_R_(1, i beta, 0)^t}
        \lambda R \begin{pmatrix}
            1 \\ i \beta \\ 0
        \end{pmatrix}
        = \lambda \begin{pmatrix}
            a + i b \beta \\ c + i d \beta \\ e + if \beta
        \end{pmatrix}
        = \begin{pmatrix}
            1 \\ i \beta^\prime \\ 0
        \end{pmatrix}.
    \end{align}
    For $\beta = 0$, one easily checks that (\ref{dir::eq:beta_beta_prime_are_index_of_the_same_line}) holds if and only if \ref{lin::item:beta_beta_prime_are_index_of_the_same_line:c} from the statement is true.
    % $\lambda \in \R$ and $\beta^\prime = 0$. 
    Thus, we assume from now on $\beta \neq 0$. Then the equality in the third component in (\ref{dir::eq:lambda_R_(1, i beta, 0)^t}) entails $e = f = 0$. Since the matrix $R$ is orthogonal with determinant $1$, $R$ must be of the form
    \begin{align*}
        R = \begin{pmatrix}
             a & b & 0 \\
            \mp b & \pm a & 0 \\
            0 & 0 & \pm 1
        \end{pmatrix}
    \end{align*}
    with $a^2 + b^2 = 1$. Then (\ref{dir::eq:beta_beta_prime_are_index_of_the_same_line}) becomes
    \begin{align*}
        \lambda R \begin{pmatrix}
            1 \\ i \beta \\ 0
        \end{pmatrix}
        = \lambda \begin{pmatrix}
            a + i b \beta \\ \mp b + (\pm i a \beta) \\ 0
        \end{pmatrix}
        = \begin{pmatrix}
            1 \\ i \beta^\prime \\ 0
        \end{pmatrix}.
    \end{align*}
    Equality in the first component yields 
    \begin{align*}
        \lambda = \frac{1}{a + i b \beta}.
    \end{align*}
    Then the second component gives us
    \begin{align*}
        \frac{\mp b + (\pm i a \beta)}{a + i b \beta}
        = \frac{(\mp b + (\pm i a \beta))(a - i b \beta)}{a^2 + b^2 \beta^2}
        &= \frac{\mp ab  + (\pm ab \beta^2) + i (\pm a^2 \beta + (\pm b^2 \beta))}{a^2 + b^2 \beta^2} \\
        &= i \beta^\prime.
    \end{align*}
    Looking at the real and imaginary part of this equation separately, we arrive at
    \begin{align}
        \label{dir::eq::real_part_vanishes} 
        0 &= \mp ab  + (\pm ab \beta^2) = \pm ab (\beta^2 - 1), \\
        \label{dir::eq::imaginary_part_equals_beta_prime}
        \pm \beta^\prime &= \frac{\ a^2 \beta +  b^2 \beta}{a^2 + b^2 \beta^2} = \frac{\beta}{a^2 + b^2 \beta^2} .
    \end{align}
    We do a case distinction to discuss the solutions $a, b, \beta, \beta^\prime$ to these equations.
    \begin{enumerate}
        \item If $\beta = 1$, then (\ref{dir::eq::real_part_vanishes}) is satisfied by all $a, b \in \R$ with $a^2 + b^2 = 1$. The equation (\ref{dir::eq::imaginary_part_equals_beta_prime}) is equivalent to
        \begin{align*}
            \pm \beta^\prime = \beta = 1.
        \end{align*}
        Since $\beta, \beta^\prime \in [0,1]$ the sign in the above equation must be positive, and we obtain $\beta^\prime = \beta$ as well as
        \begin{align*}
            R = \begin{pmatrix}
                a & b & 0 \\
                -b & a & 0 \\
                0 & 0 & 1
            \end{pmatrix}.
        \end{align*}
        Further, we have
        $
            \lambda = \frac{1}{a+ib} = a-ib,
        $
        and this proves \ref{lin::item:beta_beta_prime_are_index_of_the_same_line:a}.
        \item If $\beta \neq 1$ and $a \neq 0$, then (\ref{dir::eq::real_part_vanishes}) entails $b = 0$. Consequently, we must have $a \in \{-1,1\}$ since $a+ib \in S^1$ is orthogonal, and (\ref{dir::eq::imaginary_part_equals_beta_prime}) tells us
        \begin{align*}
            \pm \beta^\prime = \beta.
        \end{align*}
        As before, the sign in this equation must be positive, and we get $\beta^\prime = \beta$ as well as
        \begin{align*}
            R = \begin{pmatrix}
                -1 & 0 & 0 \\
                0 & -1 & 0 \\
                0 & 0 & 1
            \end{pmatrix}
            \quad \text{ or } \quad 
            R = \begin{pmatrix}
                1 & 0 & 0 \\
                0 & 1 & 0 \\
                0 & 0 & 1
            \end{pmatrix}
        \end{align*}
        depending on whether $a = 1$ or $a = -1$.
        Further, $\lambda = \frac{1}{a}$ and thus \ref{lin::item:beta_beta_prime_are_index_of_the_same_line:b} from the statement holds.
        \item If $\beta \neq 1$ and $a =0$, then $b \in \{-1, 1\}$ and (\ref{dir::eq::imaginary_part_equals_beta_prime}) yields
        \begin{align*}
            \pm \beta^\prime = \frac{1}{\beta}.
        \end{align*}
        (Recall that we assume $\beta \neq 0$.) This contradicts the assumption that both $\beta$ and $\beta^\prime$ are in $[0,1]$, and therefore this case is not applicable.
    \end{enumerate}
\end{proof}

For the classification of nontracial quantum graphs we will need the following very similar lemma.

\begin{lemma}
    \label{lin::lemma:uniqueness_of_lambda_in_C2}
    For $\beta, \beta^\prime \in [-1,1]$, $\lambda \in \C$ and $R \in \SO(2)$ we have
    \begin{align}
    \label{dir::eq:beta_beta_prime_are_index_of_the_same_line_in_C2}
        \lambda R \begin{pmatrix}
            1 \\ i \beta
        \end{pmatrix}
        = \begin{pmatrix}
            1 \\ i \beta^\prime
        \end{pmatrix}
    \end{align}
    if and only if $\beta = \beta^\prime$ and one of the following is true.
    \begin{enumerate}[label=(\alph*)]
        \item\label{lin::item:beta_beta_prime_are_index_of_the_same_line_in_C2:a} $\beta = \pm 1, \lambda = x + iy \in S^1$ ($x, y \in \R$) and $\displaystyle R = \begin{pmatrix}
            x & -\beta y \\
            \beta y & x \\
        \end{pmatrix}$,
        \item\label{lin::item:beta_beta_prime_are_index_of_the_same_line_in_C2:b} $\beta \in (-1,1), \lambda = \pm 1$ and $\displaystyle R = \lambda \mathrm{I}_2$.
        %\item $\beta \in (-1,1), \lambda = -1$ and $\displaystyle R = -\mathrm{I}_2$.
    \end{enumerate}
\end{lemma}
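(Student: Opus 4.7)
The plan is to mirror the approach used in the proof of Lemma \ref{lin::lemma:uniqueness_of_index_beta_for_lines_in_C3}, but in the lower-dimensional setting. First, sufficiency of (a) and (b) is a routine verification: just plug the explicit $\lambda$ and $R$ into (\ref{dir::eq:beta_beta_prime_are_index_of_the_same_line_in_C2}) and compute.

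For necessity, I would parametrize a general rotation as $R = \begin{pmatrix} a & b \\ -b & a \end{pmatrix}$ with $a,b \in \R$ and $a^2 + b^2 = 1$. Equation (\ref{dir::eq:beta_beta_prime_are_index_of_the_same_line_in_C2}) then becomes
\begin{align*}
    \lambda \begin{pmatrix} a + ib\beta \\ -b + ia\beta \end{pmatrix} = \begin{pmatrix} 1 \\ i\beta' \end{pmatrix}.
\end{align*}
Matching the first components gives $\lambda = (a + ib\beta)^{-1}$ (noting the denominator must be nonzero, since otherwise the left-hand side vanishes). Substituting into the second component and rationalizing produces
\begin{align*}
    \frac{-ab(1-\beta^2) + i\beta}{a^2 + b^2\beta^2} = i\beta'.
\end{align*}
Separating real and imaginary parts yields the two scalar conditions $ab(1-\beta^2) = 0$ and $\beta = \beta'(a^2 + b^2\beta^2)$.

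The analysis then splits on $\beta$. If $\beta = \pm 1$, the first equation is automatic and the second reduces to $\beta = \beta'$ (using $a^2 + b^2 = 1$), so $R$ may be an arbitrary element of $\SO(2)$. Computing $\lambda = (a + ib\beta)^{-1} = a - ib\beta$ and setting $x := a$, $y := -b\beta$ (so $b = -\beta y$ since $\beta^2 = 1$) yields exactly the matrix form in \ref{lin::item:beta_beta_prime_are_index_of_the_same_line_in_C2:a}. If $\beta \in (-1,1)$, then $1 - \beta^2 > 0$ forces $ab = 0$; the subcase $b = 0$ gives $a = \pm 1$, $R = \pm \mathrm{I}_2$, $\lambda = a$, and $\beta = \beta'$, which is case \ref{lin::item:beta_beta_prime_are_index_of_the_same_line_in_C2:b}; the subcase $a = 0$ forces $\beta' = 1/\beta$ when $\beta \neq 0$, contradicting $\beta' \in [-1,1]$, and leaves $\lambda$ undefined when $\beta = 0$, so this subcase is excluded.

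There is no real obstacle here; the only delicate point is the bookkeeping in case \ref{lin::item:beta_beta_prime_are_index_of_the_same_line_in_C2:a}, where the $\beta$-dependent signs in the claimed parametrization of $R$ must be reconciled with the expression $\lambda = a - ib\beta$. Once one substitutes $\beta = \pm 1$ and introduces $(x,y)$ as above, the stated form falls out directly.
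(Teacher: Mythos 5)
Your proof is correct and follows essentially the same route as the paper's: parametrize $R$ by $a+ib\in S^1$, extract $\lambda=(a+ib\beta)^{-1}$ from the first component, split the second component into the real condition $ab(1-\beta^2)=0$ and the imaginary condition $\beta=\beta'(a^2+b^2\beta^2)$, and run the same case distinction. The only cosmetic difference is that the paper dispatches $\beta=0$ separately at the outset, whereas you absorb it into the $a=0$ subcase, which works equally well.
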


\begin{proof}
    The proof is very similar to the previous lemma. First, one checks that \ref{lin::item:beta_beta_prime_are_index_of_the_same_line_in_C2:a} and \ref{lin::item:beta_beta_prime_are_index_of_the_same_line_in_C2:b} entail (\ref{dir::eq:beta_beta_prime_are_index_of_the_same_line_in_C2}). Now, if $\beta = 0$, then one immediately observes (\ref{dir::eq:beta_beta_prime_are_index_of_the_same_line_in_C2}) implies \ref{lin::item:beta_beta_prime_are_index_of_the_same_line_in_C2:b}. To prove this implication for general $\beta$, write 
    \begin{align*}
        R := \begin{pmatrix}
             a & b \\
            -b & a
        \end{pmatrix}
    \end{align*}
    with $a + ib \in S^1$. Similarly as in the previous proof, the equation (\ref{dir::eq:beta_beta_prime_are_index_of_the_same_line_in_C2}) is equivalent to 
    $$
    \lambda = \frac{1}{a+ib \beta}
    $$ 
    and
    \begin{align}
        \label{dir::eq::real_part_vanishes_in_C2} 
        0 &= - ab  + (ab \beta^2) = ab (\beta^2 - 1), \\
        \label{dir::eq::imaginary_part_equals_beta_prime_in_C2}
        \beta^\prime &= \frac{\ a^2 \beta +  b^2 \beta}{a^2 + b^2 \beta^2} = \frac{\beta}{a^2 + b^2 \beta^2}.
    \end{align}
    Let us make a case distinction.
    \begin{enumerate}
        \item If $\beta = \pm 1$, then (\ref{dir::eq::real_part_vanishes_in_C2}) is satisfied by all $a+ib \in S^1$. The equation (\ref{dir::eq::imaginary_part_equals_beta_prime_in_C2}) yields
        \begin{align*}
            \beta^\prime = \beta
        \end{align*}
        and further we have $\lambda = \frac{1}{a + ib \beta} = a - ib\beta$.
        Thus, we obtain \ref{lin::item:beta_beta_prime_are_index_of_the_same_line_in_C2:a}.
        \item If $|\beta| < 1$ and $a \neq 0$, then (\ref{dir::eq::real_part_vanishes_in_C2}) entails $b = 0$. Consequently, we must have $a \in \{-1,1\}$ since $a+ib \in S^1$ and (\ref{dir::eq::imaginary_part_equals_beta_prime_in_C2}) tells us
        \begin{align*}
            \beta^\prime = \beta.
        \end{align*}
        Thus, we obtain \ref{lin::item:beta_beta_prime_are_index_of_the_same_line_in_C2:b} with $\lambda = \frac{1}{a+ib\beta} = a$.
        \item If $0 \neq |\beta| < 1$ and $a =0$, then we must have $b= \pm 1$. Hence, (\ref{dir::eq::imaginary_part_equals_beta_prime_in_C2}) yields
        \begin{align*}
            \beta^\prime = \frac{1}{\beta}.
        \end{align*}
        This contradicts the assumption that both $\beta$ and $\beta^\prime$ are in $[-1,1]$. Therefore this case is not applicable.
        % \item If $\beta = 0$, then (\ref{dir::eq::real_part_vanishes_in_C2}) is equivalent to $ab=0$ while (\ref{dir::eq::imaginary_part_equals_beta_prime_in_C2}) entails $a \neq 0$. Thus, we have $a = 1$ or $a=-1$, and one obtains (B) or (C) depending on which is the case.
    \end{enumerate}
\end{proof}

\section{Tracial Quantum Graphs with One Quantum Edge}
\label{sec::t1e}
%-------------------------------------------------------------------------
%-------------------------------------------------------------------------

In this section, we classify all quantum graphs on $(M_2, \tau)=(M_2, 2 \mathrm{Tr})$ with one quantum edge. As discussed in Section \ref{sec::quantum_edge_spaces_and_Pauli_matrices}, for that we need to classify all lines $V \subset \C^4$ up to a real rotation of the last three components. With the help of Lemmas \ref{lin::lemma:existence_of_index_beta_for_lines_in_C3} and \ref{lin::lemma:uniqueness_of_index_beta_for_lines_in_C3} one obtains the following result.

\begin{lemma}
    \label{t1e::lemma:classification_of_lines_V}
    %\label{dir::lemma:index_of_line_in_C4}
    For every line in $V\subset\C^4$ exactly one of the following is true. 
    \begin{enumerate}[label=(\Alph*)]
        \item\label{enumA::trace1edge:index_of_line_in_C4} It is $V= \spann\{e_1\}=\vcentcolon V^{(1A)}$.
        \item\label{enumB::trace1edge:index_of_line_in_C4} There are $\alpha \in \C$ and $\beta \in [0,1]$  such that
        \begin{align*}
        \left[\begin{array}{c|c}
                1& 0 \\
                \hline
                0 & R 
                \end{array}\right]
            V =  \spann\left\{ \begin{pmatrix}
                    \alpha \\ 1 \\ i \beta \\ 0
                \end{pmatrix} \right\}
                =\vcentcolon
                V^{(1B)}_{\alpha,\beta}
        \end{align*}
        holds for some $R \in \SO(3)$ and 
        \begin{align}
        \label{tra::eq:J^(1B)}\tag{$\ast$}
        \begin{aligned}
                \beta = 1 \quad &\implies \quad \alpha \in [0, \infty), \\
                \beta \in [0,1) \quad &\implies \quad \arg(\alpha) \in [0, \pi).
        \end{aligned}
        \end{align}
        Denote the index set $
        J^{(1B)}\vcentcolon=
        \{(\alpha,\beta)\vcentcolon\text{\eqref{tra::eq:J^(1B)} is fulfilled}\}$.
    \end{enumerate}
    Further, the numbers $\alpha$ and $\beta$ in \ref{enumB::trace1edge:index_of_line_in_C4} are unique.
\end{lemma}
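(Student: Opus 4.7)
The plan is to reduce the classification to Lemmas \ref{lin::lemma:existence_of_index_beta_for_lines_in_C3} and \ref{lin::lemma:uniqueness_of_index_beta_for_lines_in_C3} applied to the last three coordinates of a generator of $V$, and then to absorb the residual scalar freedom into the normalization \eqref{tra::eq:J^(1B)} of $\alpha$. The first observation is that \ref{enumA::trace1edge:index_of_line_in_C4} and \ref{enumB::trace1edge:index_of_line_in_C4} are mutually exclusive: every transformation of the form $\left[\begin{array}{c|c} 1 & 0 \\ \hline 0 & R \end{array}\right]$ with $R \in \SO(3)$ preserves $\spann\{e_1\}$, but the generator $(\alpha,1,i\beta,0)^t$ in \ref{enumB::trace1edge:index_of_line_in_C4} has a nonzero second coordinate, so no line in the orbit of $V^{(1B)}_{\alpha,\beta}$ can equal $\spann\{e_1\}$.

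For existence, if $V = \spann\{e_1\}$ we are in \ref{enumA::trace1edge:index_of_line_in_C4}. Otherwise pick a generator $v = (v_0, v_1, v_2, v_3)^t$ of $V$ with $(v_1, v_2, v_3) \neq 0$. Applying Lemma \ref{lin::lemma:existence_of_index_beta_for_lines_in_C3} to $(v_1, v_2, v_3)^t \in \C^3$ furnishes $\lambda \in \C$, $R \in \SO(3)$ and $\beta \in [0,1]$ with $\lambda R(v_1,v_2,v_3)^t = (1, i\beta, 0)^t$. Consequently
\begin{align*}
    \lambda \left[\begin{array}{c|c} 1 & 0 \\ \hline 0 & R \end{array}\right] v = \begin{pmatrix} \lambda v_0 \\ 1 \\ i \beta \\ 0 \end{pmatrix},
\end{align*}
so $V$ lies in the orbit of $V^{(1B)}_{\alpha_0,\beta}$ for $\alpha_0 := \lambda v_0$. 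To achieve \eqref{tra::eq:J^(1B)} I would then use the stabilizer of $(1, i\beta, 0)^t$ described in Lemma \ref{lin::lemma:uniqueness_of_index_beta_for_lines_in_C3}: for $\beta = 1$ the stabilizing scalars are all $\lambda' \in S^1$, so $\alpha_0$ can be rotated to $|\alpha_0| \in [0,\infty)$; for $\beta \in [0,1)$ the stabilizing scalars reduce to $\{\pm 1\}$, so $\alpha_0$ can be replaced by $-\alpha_0$, which allows normalization to $\arg(\alpha_0) \in [0,\pi)$ (with $\alpha_0 = 0$ as a degenerate representative).

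For uniqueness, suppose that $V^{(1B)}_{\alpha,\beta}$ and $V^{(1B)}_{\alpha',\beta'}$ lie in the same orbit. Then there exist $\lambda \in \C$ and $R \in \SO(3)$ with
\begin{align*}
    \lambda \left[\begin{array}{c|c} 1 & 0 \\ \hline 0 & R \end{array}\right] \begin{pmatrix} \alpha' \\ 1 \\ i \beta' \\ 0 \end{pmatrix} = \begin{pmatrix} \alpha \\ 1 \\ i \beta \\ 0 \end{pmatrix}.
\end{align*}
Reading off the last three coordinates gives $\lambda R (1, i\beta', 0)^t = (1, i\beta, 0)^t$, so Lemma \ref{lin::lemma:uniqueness_of_index_beta_for_lines_in_C3} forces $\beta = \beta'$ and pins $(\lambda, R)$ down to one of the three explicit stabilizer forms \ref{lin::item:beta_beta_prime_are_index_of_the_same_line:a}--\ref{lin::item:beta_beta_prime_are_index_of_the_same_line:c}. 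The first coordinate then reads $\lambda \alpha' = \alpha$, and a short case split shows that for each admissible $\lambda$ the constraint \eqref{tra::eq:J^(1B)} on both $\alpha$ and $\alpha'$ forces $\alpha = \alpha'$. The main obstacle is precisely this last bookkeeping: verifying in each of the three $\beta$-regimes that the normalization conditions single out exactly one orbit representative $\alpha$ — in particular that the half-plane condition $\arg(\alpha) \in [0,\pi)$ eliminates the $\lambda = -1$ ambiguity for $\beta < 1$, while the half-line condition $\alpha \in [0,\infty)$ eliminates the entire $S^1$-action for $\beta = 1$.
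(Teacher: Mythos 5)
Your proposal is correct and follows essentially the same route as the paper: reduce to Lemma \ref{lin::lemma:existence_of_index_beta_for_lines_in_C3} for existence of $\beta$, then use the stabilizer description in Lemma \ref{lin::lemma:uniqueness_of_index_beta_for_lines_in_C3} to see that $\alpha$ is determined up to multiplication by $S^1$ (when $\beta=1$) or by $\pm 1$ (when $\beta<1$), which the normalization \eqref{tra::eq:J^(1B)} then removes. The only difference is cosmetic — you spell out the mutual exclusivity of \ref{enumA::trace1edge:index_of_line_in_C4} and \ref{enumB::trace1edge:index_of_line_in_C4} and the final bookkeeping slightly more explicitly than the paper does.
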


\begin{proof}
    If $V = \spann\{e_1\}$, then \ref{enumA::trace1edge:index_of_line_in_C4} is obviously true. Further, we have 
    \begin{align*}
         \left[\begin{array}{c|c}
        1 & 0 \\
        \hline
        0 & R
        \end{array}\right] \spann\{e_1\} = \spann\{e_1\}
    \end{align*}
    for all $R \in \SO(3)$ and therefore \ref{enumB::trace1edge:index_of_line_in_C4} is false. It remains to consider the case where $V \neq \spann\{e_1\}$. Then there is some vector
    \begin{align*}
        v = \begin{pmatrix}
            v_0 \\ v_1 \\ v_2 \\ v_3
        \end{pmatrix}
        \in V 
        \quad \text{ with }
        v^\prime := \begin{pmatrix}
            v_1 \\ v_2 \\ v_3
        \end{pmatrix} \neq 0.
    \end{align*}
    Note that $v$ spans $V$ since the latter is a one-dimensional vector space. By Lemma \ref{lin::lemma:existence_of_index_beta_for_lines_in_C3} there is a $\beta \in [0,1]$ with
    \begin{align*}
        \begin{pmatrix}
            1 & i \beta & 0
        \end{pmatrix}^t
        = \lambda R v^\prime,
    \end{align*}
    for some $\lambda \in \C$ and $R \in \SO(3)$. Let $\alpha \in \C$ be given by 
    \begin{align*}
        \begin{pmatrix}
            \alpha \\ 1 \\ i \beta \\ 0
        \end{pmatrix}
        = \lambda \left[\begin{array}{c|c}
            1 & 0 \\
            \hline
            0 & R
            \end{array}\right] v.
    \end{align*}
    We discuss the uniqueness of $\alpha \in \C$ and $\beta \in [0,1]$. For that, assume that we have
    \begin{align*}
        \begin{pmatrix}
            \alpha \\ 1 \\ i \beta \\ 0
        \end{pmatrix}
        =\lambda'
        \left[\begin{array}{c|c}
            1 & 0 \\
            \hline
            0 & R'
            \end{array}\right]
        \begin{pmatrix}
            \alpha' \\ 1 \\ i \beta' \\ 0
        \end{pmatrix}
    \end{align*}
    for some $\lambda'\in\C$, $R'\in\SO(3)$, $\alpha'\in\C$ and $\beta'\in[0,1]$.
    By Lemma \ref{lin::lemma:uniqueness_of_index_beta_for_lines_in_C3} the latter equation holds if and only if one of the following is true.
    \begin{enumerate}[label=(\alph*)]
        \item $\beta=1$, $\lambda'=x+iy\in S^1$ ($x,y\in\R$) and 
        $\displaystyle R=\begin{pmatrix}x&-y&0\\y&x&0\\0&0&1\end{pmatrix}$.
        \item $\beta\in(0,1)$, $\lambda' = \pm 1$  and $\displaystyle R = \begin{pmatrix}
            \lambda^\prime & 0 & 0 \\
            0 & \lambda^\prime & 0 \\
            0 & 0 & 1
        \end{pmatrix}$,
        \item $\beta=0$, $\lambda^\prime = \pm 1$ and $\displaystyle R = \left[\begin{array}{c|c}
            \lambda^\prime & 0 \\ \hline
            0 & Q
        \end{array}\right]$ for some $Q \in \mathrm{O}(2)$ with $\mathrm{det}(Q) = \lambda^\prime$.
    \end{enumerate}
    In all cases we have $\beta=\beta'$ as well as $\alpha=\lambda'\alpha'$. Thus, in the case $\beta=1$, the number $\alpha$ is unique up to a multiplication with a scalar from $S^1$; while in the latter cases $\alpha$ is unique up to sign.

\comments{
    \underline{Case 1} Assume $\beta = 1$. Then by Lemma \ref{lin::lemma:uniqueness_of_index_beta_for_lines_in_C3} we have
    \begin{align*}
        \begin{pmatrix}
            1 & i \beta & 0
        \end{pmatrix}^t
        = e^{it} R^\prime \begin{pmatrix}
            1 & i \beta & 0
        \end{pmatrix}^t
        = e^{it} R^\prime \left(\kappa R v^\prime\right),
    \end{align*}
    for all $t \in \R$ and suitable $R^\prime \in \SO(3)$. Consequently, it is
    \begin{align*}
        \begin{pmatrix}
            z e^{it} \\ 1 \\i \beta \\ 0    
        \end{pmatrix}
        = e^{it} \left[\begin{array}{c|c}
            1 & 0 \\
            \hline
            0 & R^\prime
            \end{array}\right]
        \left( \kappa \left[\begin{array}{c|c}
            1 & 0 \\
            \hline
            0 & R
            \end{array}\right] v \right)
        \in \left[\begin{array}{c|c}
            1 & 0 \\
            \hline
            0 & \SO(3)
            \end{array}\right] V
    \end{align*}
    for all $t \in \R$. In particular, by choosing $t$ suitably, we find some $\alpha \in [0, \infty)$ such that 
    \begin{align*}
        \begin{pmatrix}
            \alpha \\ 1 \\i \beta \\ 0
        \end{pmatrix}
        \in \left[\begin{array}{c|c}
            1 & 0 \\
            \hline
            0 & \SO(3)
            \end{array}\right] V.
    \end{align*}
    Assume there are two other $\alpha^\prime \in [0, \infty)$ and $\beta^\prime \in [0,1]$ with 
    \begin{align*}
        \begin{pmatrix}
            \alpha^\prime \\ 1 \\i \beta^\prime \\ 0
        \end{pmatrix}
        \in\left[\begin{array}{c|c}
            1 & 0 \\
            \hline
            0 & \SO(3)
            \end{array}\right] V.
    \end{align*}
    Then we have
    \begin{align*}
        \begin{pmatrix}
            \alpha^\prime \\ 1 \\i \beta^\prime \\ 0
        \end{pmatrix}
        = \lambda \left[\begin{array}{c|c}
            1 & 0 \\
            \hline
            0 & R
            \end{array}\right] \begin{pmatrix}
            \alpha \\ 1 \\i \beta \\ 0
        \end{pmatrix}
    \end{align*}
    for some $\lambda \in \C$ and $R \in \SO(3)$. From Lemma \ref{lin::lemma:uniqueness_of_index_beta_for_lines_in_C3} we obtain $\beta^\prime = \beta = 1$ as well as $\lambda \in S^1$. Then $\alpha, \alpha^\prime \in [0, \infty)$ entails $\lambda = 1$. Hence, $\alpha = \alpha^\prime$.

    \underline{Case 2}
    Assume $\beta < 1$. 
    By Lemma \ref{lin::lemma:uniqueness_of_index_beta_for_lines_in_C3} we have
    \begin{align*}
        \begin{pmatrix}
            1 & i \beta & 0
        \end{pmatrix}^t
        = (-1)^k R^\prime \begin{pmatrix}
            1 & i \beta & 0
        \end{pmatrix}^t 
        = (-1)^k R^\prime \left(\kappa R v^\prime\right),
    \end{align*}
    for all $k \in \{-1, 1\}$ and suitable $R^\prime \in \SO(3)$. Consequently, 
    \begin{align*}
        \begin{pmatrix}
            (-1)^k x \\ 1 \\i \beta \\ 0    
        \end{pmatrix}
        = (-1)^k \left[\begin{array}{c|c}
            1 & 0 \\
            \hline
            0 & R^\prime
            \end{array}\right]
        \left( \kappa \left[\begin{array}{c|c}
            1 & 0 \\
            \hline
            0 & R
            \end{array}\right] v \right)
        \in \left[\begin{array}{c|c}
            1 & 0 \\
            \hline
            0 & \SO(3)
            \end{array}\right] V
    \end{align*}
    for all $k \in \{-1, 1\}$. In particular, by choosing $k \in \{-1, 1\}$ suitably, we find some $\alpha \in \{z \in \C: \mathrm{arg}(z) \in [0, \pi)\}$ (by convention $\mathrm{arg}(0)=0$) such that 
    \begin{align*}
        \begin{pmatrix}
            \alpha \\ 1 \\i \beta \\ 0
        \end{pmatrix}
        \in \C \left[\begin{array}{c|c}
            1 & 0 \\
            \hline
            0 & \SO(3)
            \end{array}\right] V.
    \end{align*}
    It remains to prove uniqueness of the pair $(\alpha, \beta)$. Assume we had some other $\alpha^\prime \in \C$ with $\mathrm{arg}(\alpha^\prime) \in [0, \pi)$ and $\beta^\prime \in [0,1]$ with 
    \begin{align*}
        \begin{pmatrix}
            \alpha^\prime \\ 1 \\i \beta^\prime \\ 0
        \end{pmatrix}
        \in \C \left[\begin{array}{c|c}
            1 & 0 \\
            \hline
            0 & \SO(3)
            \end{array}\right] V.
    \end{align*}
    As in Case 1 it follows 
    \begin{align*}
        \begin{pmatrix}
            \alpha^\prime \\ 1 \\i \beta^\prime \\ 0
        \end{pmatrix}
        = \lambda \left[\begin{array}{c|c}
            1 & 0 \\
            \hline
            0 & R
            \end{array}\right] \begin{pmatrix}
            \alpha \\ 1 \\i \beta \\ 0
        \end{pmatrix}
    \end{align*}
    for some $\lambda \in \C$ and $R \in \SO(3)$. Now, Lemma \ref{lin::lemma:uniqueness_of_index_beta_for_lines_in_C3} yields $\beta = \beta^\prime$ as well as $\lambda \in \{-1, 1\}$. Because of $\mathrm{arg}(\alpha), \mathrm{arg}(\alpha^\prime) \in [0, \pi)$ this entails $\alpha = \alpha^\prime$. The statement follows immediately.
}
\end{proof}

Using the previous lemma, we can classify the tracial quantum graphs with one quantum edge. 

\comments{
\begin{thm}
    \label{dir::thm:all_directed_loopfree_qgraphs_with_one_edge_on_M_2}
    For every $(\alpha, \beta) \in J^{(1B)}$ let $\mathcal{G}_{\alpha, \beta}^{(1B)}$ be the quantum graph with Pauli space
    \begin{align*}
        V_{\alpha, \beta}^{(1B)} = \spann \left\{ \begin{pmatrix}
            \alpha \\ 1 \\i \beta \\ 0
        \end{pmatrix} \right\}.
    \end{align*}
    Any quantum graph $\mathcal{G}$ on $(M_2,\tau)$ with one quantum edge is isomorphic to either the trivial quantum graph $\mathcal{G}^{(1A)}$ (see Example \ref{pre::example:empty_trivial_complete_qgraph})  with self-loops \todo{Nina an Björn 05??: Wieso werden hier die self-loops hervorgehoben?} or exactly one of the quantum graphs $\mathcal{G}_{\alpha,\beta}^{(1B)}$ with $(\alpha,\beta)\in J^{(1B)}$.
\end{thm}
}

\begin{thm}
    \label{t1e::thm:classification_of_1te_qgraphs}
    %\label{dir::thm:all_directed_loopfree_qgraphs_with_one_edge_on_M_2}
    Let $\mathcal{G}^{(1A)}$ and $\mathcal{G}_{\alpha, \beta}^{(1B)}$ be the quantum graphs with Pauli spaces $V^{(1A)}$ and $V_{\alpha, \beta}^{(q, 1B)}$, respectively.
    Any quantum graph $\mathcal{G}$ on $(M_2,\tau)$ with exactly one quantum edge is isomorphic to either $\mathcal{G}^{(1A)}$, the trivial quantum graph from Example \ref{pre::example:empty_trivial_complete_qgraph}, or exactly one of the quantum graphs $\mathcal{G}_{\alpha,\beta}^{(1B)}$ with $(\alpha,\beta)\in J^{(1B)}$.
\end{thm}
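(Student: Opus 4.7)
The plan is to combine the preceding lemmas directly. A quantum graph $\mathcal{G}$ on $(M_2,\tau)$ with exactly one quantum edge is, by definition, specified by a one-dimensional quantum edge space $S \subset M_2$. Writing each element of $S$ in the Pauli basis $\sigma_0, \sigma_1, \sigma_2, \sigma_3$, the coefficient vectors form the Pauli space $V \subset \C^4$ of $\mathcal{G}$, which is also one-dimensional. By Lemma \ref{pau::lemma:Pauli_spaces_of_isomorphic_quantum_graphs}, two quantum graphs with Pauli spaces $V_1, V_2$ are isomorphic if and only if there is some $R \in \SO(3)$ with $\mathrm{diag}(1,R) V_1 = V_2$. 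Thus the classification of quantum graphs with one edge is equivalent to the classification of lines in $\C^4$ under the action of $\mathrm{diag}(1,\SO(3))$, which has already been carried out in Lemma \ref{t1e::lemma:classification_of_lines_V}.

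Applying Lemma \ref{t1e::lemma:classification_of_lines_V} to the Pauli space $V$ of $\mathcal{G}$, we obtain two mutually exclusive possibilities. Either $V$ lies in the orbit of $V^{(1A)} = \spann\{e_1\}$, in which case $\mathcal{G} \cong \mathcal{G}^{(1A)}$; or there exist unique $(\alpha, \beta) \in J^{(1B)}$ with $V$ in the orbit of $V_{\alpha,\beta}^{(1B)}$, giving $\mathcal{G} \cong \mathcal{G}^{(1B)}_{\alpha,\beta}$. Uniqueness of $(\alpha,\beta)$ is part of the conclusion of Lemma \ref{t1e::lemma:classification_of_lines_V}, and the two cases are disjoint because $V^{(1A)}$ is fixed by every matrix of the form $\mathrm{diag}(1,R)$ while no $V_{\alpha,\beta}^{(1B)}$ equals $\spann\{e_1\}$ (its spanning vector has second component $1$).

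To finish, we identify $\mathcal{G}^{(1A)}$ with the trivial quantum graph from Example \ref{pre::example:empty_trivial_complete_qgraph}. The trivial quantum graph has edge space $\spann\{\mathrm{I}_2\} = \spann\{\sigma_0\}$, whose coefficient vector in the Pauli basis is $e_1$, so its Pauli space is exactly $V^{(1A)}$.

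Because all the genuine work---the orbit computation on lines in $\C^3$ via the M\"obius transformation, the lifting to $\C^4$, and the uniqueness argument---has been absorbed into Lemmas \ref{t1e::lemma:classification_of_lines_V}, \ref{lin::lemma:existence_of_index_beta_for_lines_in_C3} and \ref{lin::lemma:uniqueness_of_index_beta_for_lines_in_C3}, there is no real obstacle at this stage. The only potentially subtle point is to make sure that the normalization conditions defining $J^{(1B)}$ really do single out a unique representative in each orbit, but this is exactly what Lemma \ref{t1e::lemma:classification_of_lines_V} guarantees.
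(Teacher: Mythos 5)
Your proof is correct and follows essentially the same route as the paper: invoke Lemma \ref{t1e::lemma:classification_of_lines_V} to put the Pauli space into one of the two canonical forms with unique parameters, then apply Lemma \ref{pau::lemma:Pauli_spaces_of_isomorphic_quantum_graphs} to translate this into an isomorphism of quantum graphs. The additional remarks on the disjointness of the two cases and the identification of $\mathcal{G}^{(1A)}$ with the trivial quantum graph are accurate and only make the argument slightly more explicit than the paper's version.
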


\begin{proof}
    Let $\mathcal{G}$ be an arbitrary quantum graph on $(M_2, \tau)$ with one quantum edge. By Lemma \ref{t1e::lemma:classification_of_lines_V} its Pauli space $V$ is either $\spann\{e_1\}$ or of the form
    \begin{align*}
        V = \left[\begin{array}{c|c}
            1& 0 \\
            \hline
            0 & R 
            \end{array}\right] 
            V_{\alpha, \beta}^{(1B)}
            %\spann\left\{ \begin{pmatrix}
            %    \alpha \\ 1 \\ i \beta \\ 0
            %\end{pmatrix} \right\}
    \end{align*}
    for some $R \in \SO(3)$ and unique $(\alpha,\beta)\in J^{(1B)}$.
     In the first case, $\mathcal{G}$ is the trivial graph with self-loops and one quantum edge. Otherwise, Lemma $\ref{pau::lemma:Pauli_spaces_of_isomorphic_quantum_graphs}$ yields that $\mathcal{G}$ is isomorphic to $\mathcal{G}_{\alpha, \beta}^{(1B)}$. 
\end{proof}

In Example \ref{pre::example:properties} we already discussed the properties of the trivial graph $\mathcal{G}^{(1A)}$. Let us now also investigate the quantum graphs $\mathcal{G}_{\alpha, \beta}^{(1B)}$ with $(\alpha,\beta)\in J^{(2)}$.

\begin{proposition}
    \label{t1e::prop:properties_of_G_alpha,beta^(1B)}
    %\label{dir::graph_representation_1edge}
    Let $(\alpha,\beta)\in J^{(1B)}$ and denote $\beta_+\vcentcolon=1+\beta$ and $\beta_-=1-\beta$.
    The quantum graph $\mathcal{G}_{\alpha, \beta}^{(1B)}$ has the quantum edge space
    \begin{align*}
        S_{\alpha,\beta}^{(1B)}=
        \spann\left\{
        \begin{pmatrix}
            \alpha&\beta_+\\ \beta_-&\alpha
        \end{pmatrix}
        \right\}
    \end{align*}
    and the quantum adjacency matrix
    \begin{align*}
        A_{\alpha,\beta}^{(1B)}=
        c^{-1}
        \begin{pmatrix}
            |\alpha|^2&\alpha\beta_+&\overline{\alpha}\beta_+&\beta_+^2\\
            \alpha\beta_-&|\alpha|^2&\beta_-\beta_+&\overline{\alpha}\beta_+\\
            \overline{\alpha}\beta_-&\beta_-\beta_+&|\alpha|^2&\alpha\beta_+\\
            \beta_-^2&\overline{\alpha}\beta_-&\alpha\beta_-&|\alpha|^2
        \end{pmatrix},
    \end{align*}
    where $c\vcentcolon=|\alpha|^2+1+\beta^2$. Its spectrum is
    \begin{align*}
        \frac{1}{|\alpha|^2+1+\beta^2}
        \Big\{
            & \left|\alpha + \sqrt{1-\beta^2}\right|^2,
            \left(\alpha - \sqrt{1-\beta^2}\right)\left(\overline{\alpha} + \sqrt{1-\beta^2}\right), \\
            & \left| \alpha - \sqrt{1-\beta^2} \right|^2, \left(\alpha + \sqrt{1-\beta^2}\right)\left(\overline{\alpha} - \sqrt{1-\beta^2}\right)
        \Big\}.
    \end{align*}
    The quantum graph $\mathcal{G}_{\alpha,\beta}^{(1B)}$ is
    \begin{itemize}
        \item (GNS- respectively KMS-) undirected if and only if $\alpha\in\R$ and $\beta=0$.
        \item never reflexive,
        \item loopfree if and only if $\alpha=0$.
    \end{itemize}
\end{proposition}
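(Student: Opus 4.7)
My plan is to pass from the Pauli-space description through a matrix description and a Kronecker-product description, and then read off each property essentially mechanically. The four parts (edge space, adjacency matrix, spectrum, and the three structural properties) fall out in order once the setup is in place.

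For the edge space I would expand the spanning vector $(\alpha,1,i\beta,0)^t$ of $V_{\alpha,\beta}^{(1B)}$ against the Pauli basis; this immediately gives the matrix $\begin{pmatrix}\alpha & \beta_+ \\ \beta_- & \alpha\end{pmatrix}$, establishing $S_{\alpha,\beta}^{(1B)}$. To obtain the adjacency matrix I would invoke Proposition \ref{pre::prop:S->A}. Since $\tau = 2\Tr$ gives $\rho = 2\mathrm{I}_2$, all fractional powers of $\rho$ are scalar, so the formula collapses to $A(x) = c^{-1}\,XxX^*$ with $X$ the spanning matrix and $c = \langle X,X\rangle^{\KMS}_{\tau^{-1}} = \tfrac{1}{2}\Tr(X^*X) = |\alpha|^2 + 1 + \beta^2$. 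Remark \ref{pre::rem:adjacency_matrix_A_as_Kronecker_product} then yields $A = c^{-1}(X \otimes \overline{X})$, and expanding the Kronecker product entrywise (using $\beta_\pm \in \R$, so $\overline X$ only differs from $X$ by conjugating $\alpha$) reproduces the stated $4 \times 4$ matrix. The spectrum then equals $c^{-1}$ times the set of pairwise products of eigenvalues of $X$ and of $\overline X$; from the characteristic polynomial $(\alpha-\lambda)^2 - (1-\beta^2)$ these are $\alpha \pm \sqrt{1-\beta^2}$ and $\overline\alpha \pm \sqrt{1-\beta^2}$ respectively, and the four products match the list in the statement.

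For the structural properties I would argue as follows. Reflexivity asks $X \propto \mathrm{I}_2$, which is impossible since $\beta_+ \geq 1$. Loopfreeness is equivalent to the first Pauli coordinate vanishing by Lemma \ref{pau::lemma:Pauli_spaces_of_isomorphic_quantum_graphs}, i.e.\ to $\alpha = 0$. For undirectedness, the modular condition $\rho S \rho^{-1} = S$ is automatic because $\rho$ is scalar, so both GNS- and KMS-undirectedness reduce to $S = S^*$, i.e.\ $X^* = \lambda X$ for some $\lambda \in \C$. Comparing diagonal entries forces $\overline\alpha = \lambda\alpha$, and the off-diagonals give $\beta_- = \lambda\beta_+$ and $\beta_+ = \lambda\beta_-$; multiplying the latter pair and splitting on whether $\beta = 1$ forces $\lambda^2 = 1$, and a short sign check leaves only $\lambda = 1$, $\beta = 0$, $\alpha \in \R$.

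There is no genuine obstacle: the only conceptual move is the reduction to a Kronecker product via Proposition \ref{pre::prop:S->A} and Remark \ref{pre::rem:adjacency_matrix_A_as_Kronecker_product}, after which everything is a short matrix computation, and the case analysis for undirectedness takes only a few lines.
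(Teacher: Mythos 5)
Your proposal is correct and follows essentially the same route as the paper: read off the spanning matrix $T$ from the Pauli basis, use that $\rho=2\mathrm{I}_2$ is scalar to collapse Proposition \ref{pre::prop:S->A} and Remark \ref{pre::rem:adjacency_matrix_A_as_Kronecker_product} to $A=c^{-1}(T\otimes\overline{T})$ with $c=\tfrac12\Tr(T^*T)$, obtain the spectrum from the eigenvalues $\alpha\pm\sqrt{1-\beta^2}$ of $T$, and settle the three structural properties by the same elementary comparisons (the paper deduces $\lambda=1$, $\beta=0$ directly from $\beta_\pm\in[0,2]$ rather than by multiplying the two off-diagonal equations, but this is an inessential variation).
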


\begin{proof}
    In this proof, let us omit the indices $\alpha, \beta$ and $(1B)$ for the sake of brevity, i.e. we denote $\mathcal{G}_{\alpha, \beta}^{(q)}$ simply by $\mathcal{G}$.
    Using the definition of the Pauli space of a quantum graph from Section \ref{sec::quantum_edge_spaces_and_Pauli_matrices}, the quantum edge space of $\mathcal{G}$ is given by 
    \begin{align*}
        \spann\left\{
        \alpha \sigma_0 +\sigma_1+i\beta\sigma_2
        \right\}=
        \spann\left\{
        \begin{pmatrix}
            \alpha&1+\beta\\1-\beta&\alpha
        \end{pmatrix}
        \right\}.
    \end{align*}
    Set %For the ease of notation, denote
    \begin{align*}
        T\vcentcolon
        = \begin{pmatrix}
            \alpha&\beta_+\\\beta_-&\alpha
        \end{pmatrix}
    \end{align*}
    and recall that $\tau = 2 \mathrm{Tr} = \mathrm{Tr}(\rho \, \cdot)$ with
    \begin{align*}
        \rho = \begin{pmatrix}
            2 & 0 \\
            0 & 2
        \end{pmatrix}.
    \end{align*}
    Thus, $\tau^{-1} = 1/2\, \mathrm{Tr}$ and the KMS-inner product induced by $\tau^{-1}$ is given by 
    \begin{align*}
        \langle x, y \rangle_{\tau^{-1}}^{\KMS} = \frac{1}{2} \mathrm{Tr}(x^\ast y).
    \end{align*}
    The norm of $T$ with respect to the KMS-inner product induced by $\tau^{-1}$ is
    \begin{align*}
        \|T\|^2
        =\left\langle
        T,T
        \right\rangle_{\tau^{-1}}^{\KMS}
        &=\frac{1}{2} \Tr\left(
        \begin{pmatrix}
            \overline{\alpha}&\beta_-\\\beta_+&\overline{\alpha}
        \end{pmatrix}
        \begin{pmatrix}
            \alpha&\beta_+\\\beta_-&\alpha
        \end{pmatrix}
        \right) 
        = |\alpha|^2 + 1 + \beta^2
        = c.
    \end{align*}
    Thus, $\{c^{-1/2}T\}$ is an orthonormal basis of $S$, and, after identifying the quantum adjacency matrix $A\vcentcolon M_2\to M_2$ with a matrix $A \in M_4$ according to Proposition \ref{pre::prop:S->A} and Remark \ref{pre::rem:adjacency_matrix_A_as_Kronecker_product}, we have
    \begin{align*}
        A = c^{-1} \left( T \otimes \overline{T} \right)
        =c^{-1}\begin{pmatrix}
            \alpha&\beta_+\\\beta_-&\alpha
        \end{pmatrix}
        \otimes
        \begin{pmatrix}
            \overline{\alpha}&\beta_+\\\beta_-&\overline{\alpha}
        \end{pmatrix}.
    \end{align*}
    A simple calculation of the Kronecker product yields the desired expression for $A$.
    It is easy to check that the spectrum of $T$ consists of the two numbers
    \begin{align*}
        \alpha + \sqrt{1-\beta^2}
        \quad \text{ and }\quad 
        \alpha - \sqrt{1-\beta^2}.
    \end{align*}
    Therefore, the spectrum of $A = c^{-1}\left( T \otimes \overline{T} \right)$ is the set
    \begin{align*}
        c^{-1} \left\{\alpha + \sqrt{1-\beta^2}, \alpha - \sqrt{1-\beta^2}\right\} \cdot \left\{\overline{\alpha} + \sqrt{1-\beta^2}, \overline{\alpha} - \sqrt{1-\beta^2}\right\},
    \end{align*}
    where the elements of the set on the right-hand side are pairwise multiplied with the elements of the set on the left-hand side.

    Recall that, in the tracial setting, a quantum graph is GNS-undirected if and only if it is KMS-undirected if and only if the quantum edge space is self-adjoint. 
    Since $S = \spann\{T\}$, this condition is equivalent to
    \begin{align*}
        \begin{pmatrix}
            \overline{\alpha}&\beta_-\\
            \beta_+&\overline{\alpha}
        \end{pmatrix}
        =\lambda
        \begin{pmatrix}
            \alpha&\beta_+\\
            \beta_-&\alpha
        \end{pmatrix}
    \end{align*}
    for some $\lambda\in\C$.
    Since $\beta_-,\beta_+\in[0,2]$ the equations $\beta_+=\lambda\beta_-$ and $\beta_-=\lambda\beta_+$ are fulfilled if and only if $\lambda=1$ and $\beta=0$, and thus, $\alpha\in\R$.
    Next, the graph $\mathcal{G}$ is reflexive if and only if $\mathrm{I}_2\in S$. Since $\beta_+\neq0$ for all $\beta\in[0,1]$ this is not true and thus, the quantum graph $\mathcal{G}$ is not reflexive. Finally, $\mathcal{G}$ is loopfree if and only if $\langle\mathrm{I}_2,T\rangle_{\tau^{-1}}^{\KMS}=\alpha=0$.
\end{proof}

\begin{remark}
    As a special case we retain from Theorem \ref{t1e::thm:classification_of_1te_qgraphs} a classification of all undirected quantum graphs on $(M_2, \tau)$. Up to isomorphism there are exactly two (GNS-/KMS-)undirected quantum graphs on $(M_2, \tau)$: the trivial quantum graph $\mathcal{G}^{(1A)}$ and the quantum graph $\mathcal{G}^{(1B)}_{0,0}$ with quantum adjacency matrix
    \begin{align*}
        A_{0,0}^{(1B)}=
        \begin{pmatrix}
            0 &0 &0 &1 \\
            0 &0 &1 &0 \\
            0 &1 &0 &0 \\
            1 &0 &0 &0
        \end{pmatrix},
    \end{align*}
    see Proposition \ref{t1e::prop:properties_of_G_alpha,beta^(1B)}. One can checks that this aligns perfectly with Matsuda's and Gromada's classification of tracial undirected quantum graphs on $M_2$, see \cite[Theorem 3.1]{matsuda_classification_2022} and \cite[Theorem 3.11]{gromada_examples_2022}.
\end{remark}

\section{Non-Loopfree Tracial Quantum Graphs with Two Quantum Edges}
\label{sec::t2e}
%-------------------------------------------------------------------------
%-------------------------------------------------------------------------

In this section, we classify all non-loopfree quantum graphs on $(M_2, \tau)=(M_2, 2 \mathrm{Tr})$ with two quantum edges. In view of Remark \ref{iso::remark:complete_classification}, together with the classification of quantum graphs on $(M_2, \tau)$ with one quantum edge this yields a complete classificaton of quantum graphs on $(M_2, \tau)$. 

Non-loopfree quantum graphs with two quantum edges are given by two-dimensional Pauli spaces $V \subset \C^4$ which are not orthogonal to $\spann\{e_1\}$. Similarly to the previous section, we use Lemmas \ref{lin::lemma:existence_of_index_beta_for_lines_in_C3} and \ref{lin::lemma:uniqueness_of_index_beta_for_lines_in_C3} to classify these spaces up to a real rotation of the last three components.

\begin{lemma}
    \label{t2e::lemma:classification_of_planes_V}
    Let $V\subset\C^4$ be a plane with $V\not\perp e_1$. Then there are unique numbers $\beta\in[0,1]$, $\gamma,\delta\in\C$ such that
    \begin{align*}
        \left[\begin{array}{c|c}
                1& 0 \\
                \hline
                0 & R
                \end{array}\right]
        V=
        \spann \left\{
            \begin{pmatrix}
                0 \\ 1 \\ i \beta \\ 0
            \end{pmatrix},
            \begin{pmatrix}
                1 \\ i \beta \gamma \\ \gamma \\ \delta
            \end{pmatrix}
            \right\}
            =\vcentcolon V^{(2)}_{\beta,\gamma,\delta}
    \end{align*}
    holds for some $R\in\SO(2)$, and
    \begin{align}
        \label{tra::eq:J^(2)}\tag{$\ast$}
        \begin{aligned}
            \beta=0, \frac{\gamma}{\delta}\neq\pm i &\implies \arg(\gamma)\in[0,\pi),\arg(\delta)=\arg(\gamma)+\pi,|\delta|\leq|\gamma|,\\ 
            \beta=0, \frac{\gamma}{\delta}=\pm i &\implies \gamma\in[0,\infty),\arg(\delta)=\arg(\gamma)+\pi,|\delta|\leq\gamma, \\
            \beta\in(0,1) &\implies \arg(\gamma)\in[0,\pi), \\
            \beta=1&\implies\gamma\in[0,\infty).
        \end{aligned}
    \end{align}
    Denote the index set $
        J^{(2)}\vcentcolon=
        \{(\beta,\gamma,\delta)\vcentcolon\text{\eqref{tra::eq:J^(2)} is fulfilled}\}$.
\end{lemma}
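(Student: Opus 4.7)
The plan is to follow the strategy used for the one-edge classification (Lemma \ref{t1e::lemma:classification_of_lines_V}), reducing the problem step by step to the auxiliary lemmas from Section \ref{sec::classification_of_lines}. First I would extract $\beta$ from the intersection $V_0 \vcentcolon= V \cap e_1^\perp$. Since $\dim V = 2$ and $V \not\perp e_1$, $V_0$ is a line in $\spann\{e_2, e_3, e_4\} \cong \C^3$. Applying Lemma \ref{lin::lemma:existence_of_index_beta_for_lines_in_C3} to this line yields some $\beta \in [0,1]$ and $R \in \SO(3)$ so that the block-diagonal matrix with blocks $1$ and $R$ sends $V_0$ onto $\spann\{(0, 1, i\beta, 0)^t\}$. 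After replacing $V$ by its image we may assume $(0, 1, i\beta, 0)^t \in V$.

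Next I would choose a second generator of $V$ in canonical form. Because $V \not\perp e_1$, there exists $v \in V$ with first coordinate $1$, and this lift is unique modulo $V_0$. The freedom to add multiples of $(0, 1, i\beta, 0)^t$ lets us arrange the second coordinate of $v$ to equal $i\beta$ times its third coordinate: solving for the scalar correction $\mu$ gives $\mu(1+\beta^2) = v_1 - i\beta v_2$, so $\mu$ is uniquely determined (the factor $1+\beta^2$ never vanishes). The resulting vector has the form $(1, i\beta\gamma, \gamma, \delta)^t$ with uniquely determined $\gamma, \delta \in \C$, establishing existence of the normal form $V^{(2)}_{\beta, \gamma, \delta}$.

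For uniqueness, suppose two such tuples describe the same plane modulo the block-diagonal $\SO(3)$-action. The relating rotation $R'$ must send $V_0$ to the corresponding intersection for the second tuple, hence must send $\spann\{(1, i\beta, 0)^t\}$ to $\spann\{(1, i\beta', 0)^t\}$ in $\C^3$. Lemma \ref{lin::lemma:uniqueness_of_index_beta_for_lines_in_C3} then forces $\beta = \beta'$ and classifies the allowed $R'$ into three families corresponding to $\beta = 1$, $\beta \in (0,1)$, and $\beta = 0$. In each case I would compute the induced action of $R'$ on $(\gamma, \delta)$ and verify that the conditions \eqref{tra::eq:J^(2)} cut out exactly one representative per orbit.

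The main technical obstacle will be the $\beta = 0$ case. For $\beta = 1$ the residual stabilizer acts on $\gamma$ by multiplication by an arbitrary element of $S^1$ (leaving $\delta$ fixed), so $\gamma \in [0,\infty)$ gives a unique normalization. For $\beta \in (0,1)$ the residual group is only $\{\pm I\}$, acting by $\gamma \mapsto \pm\gamma$ with $\delta$ fixed, so the half-plane condition $\arg\gamma \in [0, \pi)$ suffices. For $\beta = 0$, however, the residual stabilizer is isomorphic to $\mathrm{O}(2)$ and acts on $(\gamma, \delta) \in \C^2$ by its natural real-linear extension, producing a richer orbit structure. Separating the special locus $\gamma^2 + \delta^2 = 0$ (equivalently $\gamma = \pm i \delta$) from the generic case is necessary to obtain a clean normal form, which is precisely the reason for the case split in \eqref{tra::eq:J^(2)}; carrying out this $\mathrm{O}(2)$-orbit analysis in detail is where the core work of the proof lies.
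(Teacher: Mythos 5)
Your proposal follows essentially the same route as the paper: extract $\beta$ from the (up to scalar unique) line $V \cap e_1^\perp$ via Lemma \ref{lin::lemma:existence_of_index_beta_for_lines_in_C3}, normalize the second generator to $(1, i\beta\gamma, \gamma, \delta)^t$ (your coset normalization is exactly the paper's orthogonality condition $v \perp w$), and then run the stabilizer case analysis from Lemma \ref{lin::lemma:uniqueness_of_index_beta_for_lines_in_C3} with the same three cases and the same induced actions on $(\gamma,\delta)$. The $\mathrm{O}(2)$-orbit analysis that you defer in the $\beta=0$ case is carried out in the paper by applying Lemmas \ref{lin::lemma:lines_in_C2_via_Möbius_trafo} and \ref{lin::lemma:uniqueness_of_lambda_in_C2} to the vector $(\gamma,\delta)\in\C^2$, with your special locus $\gamma^2+\delta^2=0$ appearing there as the condition $\gamma/\delta=\pm i$.
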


\begin{proof}
         Since $V$ is not orthogonal to $e_1$ there is an (up to a scalar factor) unique vector $v \in \C^3$ such that
        \begin{align*}
            v =\vcentcolon \begin{pmatrix}
                0 \\ v_1 \\ v_2 \\ v_3
            \end{pmatrix}
            \in V.
        \end{align*}
     Indeed, if there were two linear independent such vectors $v, v' \in \C^3$, then $\spann\{v,v^\prime\}$ would be a two-dimensional subspace of $V$. Since $\mathrm{dim}(V)=2$, it would follow that $V$ is orthogonal to $e_1$ -- contradicting the assumption. 
    Since $v$ is orthogonal to $e_1$, by Lemma \ref{lin::lemma:existence_of_index_beta_for_lines_in_C3} there exists a number $\beta \in [0,1]$ such that
    \begin{align*}
            \begin{pmatrix}
                0 \\ 1 \\ i \beta \\ 0
            \end{pmatrix}
            = \lambda
            \left[\begin{array}{c|c}
            1& 0 \\
            \hline
            0 & R
            \end{array}\right] 
            v
    \end{align*}
    holds for some $\lambda\in\C$ and an $R\in\SO(3)$. 
    %Denote
    %$v\vcentcolon=
    %\begin{pmatrix}
    %    0 & 1 & i \beta & 0
    %\end{pmatrix}^T$.
    Let $w\in V$ be another vector that is orthogonal to $v$ and such that $V=\spann\{v,w\}$. Then $w$ has non-vanishing first component. Without loss of generality, we may assume that its first component is $w_1=1$. Using $v\perp w$ one can check that there are numbers $\gamma,\delta\in\C$ such that
    \begin{align*}
        \begin{pmatrix}
            1 \\ i \beta \gamma \\ \gamma \\ \delta
        \end{pmatrix}
        = 
        \left[\begin{array}{c|c}
            1& 0 \\
            \hline
            0 & R
            \end{array}\right] 
            w.
    \end{align*}
    It remains to investigate uniqueness of $\beta,\gamma,\delta$. 
    Assume that 
    \begin{align*}
        \spann \left\{
        \begin{pmatrix}
            0 \\ 1 \\ i \beta \\ 0
        \end{pmatrix},
        \begin{pmatrix}
            1 \\ i \beta \gamma \\ \gamma \\ \delta
        \end{pmatrix}
        \right\}
        =\left[\begin{array}{c|c}
            1& 0 \\
            \hline
            0 & R
            \end{array}\right]
        \spann \left\{
        \begin{pmatrix}
            0 \\ 1 \\ i \beta' \\ 0
        \end{pmatrix},
        \begin{pmatrix}
            1 \\ i \beta' \gamma' \\ \gamma' \\ \delta'
        \end{pmatrix}
        \right\}
    \end{align*}
    holds for some $R\in\SO(3)$, $\beta'\in[0,1]$ and $\gamma',\delta'\in \C$.
    Looking at the first components of the spanning vectors one checks that this is equivalent to 
    \begin{align*}
        \begin{pmatrix}
            0 \\ 1 \\ i \beta \\ 0
        \end{pmatrix}
        =\lambda^\prime
        \left[\begin{array}{c|c}
            1& 0 \\
            \hline
            0 & R
            \end{array}\right]
        \begin{pmatrix}
            0 \\ 1 \\ i \beta' \\ 0
        \end{pmatrix}
        \quad\text{and}\quad
        \begin{pmatrix}
            1 \\ i \beta \gamma \\ \gamma \\ \delta
        \end{pmatrix}
        =
        \left[\begin{array}{c|c}
            1& 0 \\
            \hline
            0 & R
            \end{array}\right]
        \begin{pmatrix}
            1 \\ i \beta' \gamma' \\ \gamma' \\ \delta'
        \end{pmatrix}
    \end{align*}
    for some $\lambda^\prime\in\C$.
    The first equation and Lemma \ref{lin::lemma:uniqueness_of_index_beta_for_lines_in_C3} entail $\beta'=\beta$. % and we know the representation of $R\in\SO(3)$ and $\lambda^\prime\in\C$ depending of $\beta\in[0,1]$. One checks that 
    It remains 
    to investigate uniqueness of the numbers $\gamma,\delta\in\C$ with
    \begin{align}
    \label{eq::2equations_in_proof_of_Lemma_2edges_tracialcase}
        \begin{pmatrix}
                1 \\ i \beta \\ 0
        \end{pmatrix}
        =\lambda^\prime R
            \begin{pmatrix}
                1 \\ i \beta \\ 0
            \end{pmatrix}
        \quad\text{ and }\quad
        \begin{pmatrix}
                i \beta \gamma \\ \gamma \\ \delta
            \end{pmatrix}
        = R
            \begin{pmatrix}
                i \beta \gamma' \\ \gamma' \\ \delta'
            \end{pmatrix}.
    \end{align}
    %where $R\in\SO(3)$ depends on $\beta\in[0,1]$. 
    By Lemma \ref{lin::lemma:uniqueness_of_index_beta_for_lines_in_C3} the first equality holds if and only if one of the following is true.
    \begin{enumerate}[label=(\alph*)]
        \item $\beta = 1, \lambda^\prime = x + iy \in S^1$ ($x,y\in\R$) and $\displaystyle R = \begin{pmatrix}
            x & -y & 0 \\
            y & x & 0 \\
            0 & 0 & 1
        \end{pmatrix}$. In this case, one checks that the second equation in (\ref{eq::2equations_in_proof_of_Lemma_2edges_tracialcase}) is equivalent to $\gamma=\lambda^\prime\gamma'$ and $\delta=\delta'$. Thus, the pair $(\gamma, \delta)$ can be chosen uniquely such that $\gamma \geq 0$.
        \item 
        $\beta \in (0,1), \lambda^\prime = \pm 1$ and $\displaystyle R = \begin{pmatrix}
            \lambda^\prime & 0 & 0 \\
            0 & \lambda^\prime & 0 \\
            0 & 0 & 1
        \end{pmatrix}$.
        In this case the second equation is equivalent to $\gamma=\lambda^\prime\gamma'$ and $\delta=\delta'$. Thus, the pair $(\gamma, \delta)$ can be chosen uniquely such that $\arg(\gamma)\in[0,\pi)$.
        \item 
        $\beta = 0, \lambda^\prime = \pm 1$ and $\displaystyle R = \left[\begin{array}{c|c}
            \lambda^\prime  & 0 \\ \hline
            0 & Q
        \end{array}\right]
        \in\SO(3)$
        for some $Q \in \mathrm{O}(2)$ with $\mathrm{det}(Q) = \lambda^\prime$. In this case, the second equality is equivalent to
        \begin{align*}
            \begin{pmatrix}
                \gamma \\ \delta
            \end{pmatrix}
            =Q
            \begin{pmatrix}
                \gamma^\prime \\ \delta'
            \end{pmatrix}.
        \end{align*}
        By Lemma \ref{lin::lemma:lines_in_C2_via_Möbius_trafo} we can choose $Q \in \mathrm{O}(2)$ such that
        \begin{align*}
            \kappa \begin{pmatrix}
                1 \\ i \eta
            \end{pmatrix}
            =Q
            \begin{pmatrix}
                \gamma' \\ \delta'
            \end{pmatrix}
        \end{align*}
        holds for some $\kappa \in \C$ and $\eta \in [0,1]$, where
        $\eta$ is unique by Lemma \ref{lin::lemma:uniqueness_of_lambda_in_C2}. By the same lemma, the number $\kappa$ is unique up to scalar multiplication from $S^1$ if $\eta=1$ and unique up to a sign if $\eta < 1$. The first case applies if and only if $\gamma/\delta=\pm i$. Indeed, the map $\varphi$ from Section \ref{lin::subsec:lines_in_C2} that sends a vector from $\C^2$ to the quotient of its first and second component satisfies   
        \begin{align*}
            \varphi\left( \begin{pmatrix}
                1 \\ i
            \end{pmatrix} \right) = - i
        \end{align*}
        and an inspection of the Möbius transform from Section \ref{lin::subsec:lines_in_C2} shows
        \begin{align*}
            \varphi(u) = - i
            \quad \Leftrightarrow \quad 
            \exists T \in \mathrm{O}(2): \; \varphi(Tu) = \pm i
        \end{align*}
        for all $u \in \C^2 \setminus \{0\}$.
        It follows that the pair $(\gamma, \delta)$ can be chosen uniquely such that
        \begin{align*}
            \mathrm{arg}(\gamma) \in [0, \pi)\,, \mathrm{arg}(\delta) = \mathrm{arg}(\gamma) + \pi \text{ and } |\delta| \leq |\gamma|
        \end{align*}
        and additionally $\gamma \geq 0$ if $\gamma/\delta= \pm i$.
    \end{enumerate}
\end{proof}

Using the previous lemma, we can classify the tracial non-loopfree quantum graphs with two quantum edges.

\begin{thm}
    \label{t2e::thm:classification_of_t2e_qgraphs}
    %\label{tra::thm:all_directed_reflexive_qgraphs_with_two_edge_on_M_2}
    %For every $(\beta,\gamma,\delta) \in J^{(2)}$ 
    Let $\mathcal{G}_{\beta,\gamma,\delta}^{(2)}$ be the quantum graph with Pauli space $V_{\beta,\gamma,\delta}^{(2)}$.
    %\begin{align*}
    %    V_{\beta,\gamma,\delta}^{(2)} = 
    %    \spann \left\{
    %        \begin{pmatrix}
    %            0 \\ 1 \\ i \beta \\ 0
    %        \end{pmatrix},
    %        \begin{pmatrix}
    %            1 \\ i \beta \gamma \\ \gamma \\ \delta
    %        \end{pmatrix}
    %        \right\}
    %\end{align*}
    Any non-loopfree quantum graph $\mathcal{G}$ on $(M_2,\tau)$ with exactly two quantum edges is isomorphic to exactly one of the quantum graphs $\mathcal{G}_{\beta,\gamma,\delta}^{(2)}$ where $(\beta,\gamma,\delta)\in J^{(2)}$.
\end{thm}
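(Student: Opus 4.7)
The plan is to reduce the theorem immediately to the two preceding lemmas: Lemma \ref{pau::lemma:Pauli_spaces_of_isomorphic_quantum_graphs}, which translates isomorphism of quantum graphs on $(M_2,\tau)$ into an equality of Pauli spaces up to a block rotation $\left[\begin{array}{c|c} 1 & 0 \\ \hline 0 & R \end{array}\right]$ with $R\in\SO(3)$, and Lemma \ref{t2e::lemma:classification_of_planes_V}, which provides the canonical representatives $V^{(2)}_{\beta,\gamma,\delta}$ with $(\beta,\gamma,\delta)\in J^{(2)}$ for planes $V\subset\C^4$ not orthogonal to $e_1$. The proof then mirrors that of Theorem \ref{t1e::thm:classification_of_1te_qgraphs}.

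Concretely, let $\mathcal{G}$ be any non-loopfree quantum graph on $(M_2,\tau)$ with exactly two quantum edges, and let $V\subset\C^4$ be its Pauli space. Then $\dim V=2$ by the correspondence between the dimension of the quantum edge space and the number of quantum edges, and $V\not\perp e_1$ by the loopfree criterion of Lemma \ref{pau::lemma:Pauli_spaces_of_isomorphic_quantum_graphs}, since $\mathcal{G}$ is assumed non-loopfree. Therefore $V$ meets the hypothesis of Lemma \ref{t2e::lemma:classification_of_planes_V}, which yields a unique triple $(\beta,\gamma,\delta)\in J^{(2)}$ and some $R\in\SO(3)$ with
\begin{align*}
    \left[\begin{array}{c|c}
        1& 0 \\
        \hline
        0 & R
        \end{array}\right] V = V^{(2)}_{\beta,\gamma,\delta}.
\end{align*}
By Lemma \ref{pau::lemma:Pauli_spaces_of_isomorphic_quantum_graphs}, this equality says exactly that $\mathcal{G}\cong\mathcal{G}^{(2)}_{\beta,\gamma,\delta}$, which establishes the existence part of the classification.

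For uniqueness, suppose $\mathcal{G}^{(2)}_{\beta,\gamma,\delta}\cong\mathcal{G}^{(2)}_{\beta',\gamma',\delta'}$ for two triples in $J^{(2)}$. Applying Lemma \ref{pau::lemma:Pauli_spaces_of_isomorphic_quantum_graphs} to their Pauli spaces, we get some $R\in\SO(3)$ with
\begin{align*}
    \left[\begin{array}{c|c}
        1& 0 \\
        \hline
        0 & R
        \end{array}\right] V^{(2)}_{\beta,\gamma,\delta} = V^{(2)}_{\beta',\gamma',\delta'}.
\end{align*}
Rewriting as $V^{(2)}_{\beta,\gamma,\delta} = \left[\begin{array}{c|c} 1 & 0 \\ \hline 0 & R^{-1} \end{array}\right] V^{(2)}_{\beta',\gamma',\delta'}$ and invoking the uniqueness clause of Lemma \ref{t2e::lemma:classification_of_planes_V} forces $(\beta,\gamma,\delta)=(\beta',\gamma',\delta')$. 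Together with the existence step this gives the claimed bijection.

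There is no real obstacle at this stage: all the genuine work, both the normalization into the canonical form and the carefully chosen ranges in \eqref{tra::eq:J^(2)} that prevent double counting, has already been carried out in Lemma \ref{t2e::lemma:classification_of_planes_V}, which in turn relies on the Möbius-transformation analysis of Section \ref{sec::classification_of_lines}. The only minor point to check is that the correspondence between quantum graphs with two quantum edges and two-dimensional Pauli subspaces of $\C^4$, together with the loopfree criterion, is the exact condition ``plane $V\subset\C^4$ with $V\not\perp e_1$'' required by the lemma; this is direct from Definition \ref{pau::def:pauli_space_of_qgraph} and Lemma \ref{pau::lemma:Pauli_spaces_of_isomorphic_quantum_graphs}.
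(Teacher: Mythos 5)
Your proposal is correct and follows essentially the same route as the paper's own proof: translate non-loopfreeness into $V \not\perp e_1$ via Lemma \ref{pau::lemma:Pauli_spaces_of_isomorphic_quantum_graphs}, apply Lemma \ref{t2e::lemma:classification_of_planes_V} to obtain the unique canonical representative, and conclude with Lemma \ref{pau::lemma:Pauli_spaces_of_isomorphic_quantum_graphs} again. Your explicit spelling out of the uniqueness direction is a slightly more detailed rendering of what the paper compresses into ``the statement follows,'' but the argument is identical in substance.
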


\begin{proof}
    Let $\mathcal{G}$ be a quantum graph on $(M_2, \tau)$ with Pauli space $V \subset \C^4$. As $\mathcal{G}$ is non-loopfree, we have  $V \not \perp e_1$  by Lemma \ref{pau::lemma:Pauli_spaces_of_isomorphic_quantum_graphs}. By Lemma \ref{t2e::lemma:classification_of_planes_V} there exists a unique triple $(\beta, \gamma, \delta) \in J^{(2)}$ with
    \begin{align*}
        V = \left[\begin{array}{c|c}
            1& 0 \\
            \hline
            0 & R 
            \end{array}\right] 
            V_{\beta, \gamma, \delta}^{(2)}
    \end{align*}
    for some $R \in \SO(3)$. The statement follows from Lemma \ref{pau::lemma:Pauli_spaces_of_isomorphic_quantum_graphs}.
\end{proof}

According to the last theorem, we can describe all non-loopfree quantum graphs with two quantum edges by $V_{\beta,\gamma,\delta}^{(2)}$ where $(\beta,\gamma,\delta)\in J^{(2)}$.

\begin{proposition}
\label{t2e::prop:properties_of_G_alpha,beta,delta^(2)}
    Let $(\beta,\gamma,\delta)\in J^{(2)}$.
    Denote $\delta_+\vcentcolon=1+\delta\in\C$, $\delta_-\vcentcolon=1-\delta\in\C$, $\beta_+\vcentcolon=1+\beta$ and $\beta_-\vcentcolon=1-\beta$.
    The quantum graph $\mathcal{G}_{\beta,\gamma,\delta}^{(2)}$ has the quantum edge space
    \begin{align*}
        S_{\beta,\gamma,\delta}^{(2)}=\spann\left\{
            \begin{pmatrix}
                0&\beta_+\\
                \beta_-&0
            \end{pmatrix},
            \begin{pmatrix}
                \delta_+&-i\gamma\beta_-\\
                i\gamma\beta_+&\delta_-
            \end{pmatrix}
            \right\}
    \end{align*}
    and the quantum adjacency matrix
    \begin{align*}
        A_{\beta,\gamma,\delta}^{(2)}=
        &c_1^{-1}
        \begin{pmatrix}
            0&0&0&\beta_+^2\\
            0&0&\beta_-\beta_+&0\\
            0&\beta_-\beta_+&0&0\\
            \beta_-^2&0&0&0
        \end{pmatrix}\\
        &+
        c_2^{-1}
            \begin{pmatrix}
                |\delta_+|^2&i\overline{\gamma}\beta_-\delta_+&-i\gamma\beta_-\overline{\delta}_+&|\gamma|^2\beta_-^2\\
                -i\overline{\gamma}\beta_+\delta_+&\delta_+\overline{\delta}_-&-|\gamma|^2\beta_-\beta_+&-i\gamma\beta_-\overline{\delta}_-\\
                i\gamma\beta_+\overline{\delta}_+&-|\gamma|^2\beta_-\beta_+&\overline{\delta}_+\delta_-&i\overline{\gamma}\beta_-\delta_-\\
                |\gamma|^2\beta_+^2&i\gamma\beta_+\overline{\delta}_-&-i\overline{\gamma}\beta_+\delta_-&|\delta_-|^2
            \end{pmatrix}
    \end{align*}
    where
        $c_1\vcentcolon=1+\beta^2
        \text{ and }
        c_2\vcentcolon=1+|\delta|^2+|\gamma|^2(1+\beta^2)$.
    The quantum graph $\mathcal{G}_{\beta,\gamma,\delta}^{(2)}$ is
    \begin{itemize}
        \item (GNS- respectively KMS-) undirected if and only if $\beta=0$ and $\gamma,\delta\in\R$, 
        \item reflexive if and only if $\delta=\gamma=0$,
        \item never loopfree.
    \end{itemize}
\end{proposition}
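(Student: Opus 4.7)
The plan is to follow the same strategy used for $\mathcal{G}_{\alpha,\beta}^{(1B)}$ in Proposition \ref{t1e::prop:properties_of_G_alpha,beta^(1B)}: first translate the Pauli-space basis into matrices, check that they already form a KMS-orthogonal basis, compute the norms, and then read off $A$ via Remark \ref{pre::rem:adjacency_matrix_A_as_Kronecker_product}. The three characterizations (undirected/reflexive/loopfree) then reduce to elementary linear-algebra checks on $S$.

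Concretely, I would first expand the two spanning vectors of $V_{\beta,\gamma,\delta}^{(2)}$ in the Pauli basis. Since
\begin{align*}
    \sigma_1+i\beta\sigma_2=\begin{pmatrix}0&\beta_+\\\beta_-&0\end{pmatrix},
    \quad
    \sigma_0+i\beta\gamma\sigma_1+\gamma\sigma_2+\delta\sigma_3=\begin{pmatrix}\delta_+&-i\gamma\beta_-\\i\gamma\beta_+&\delta_-\end{pmatrix},
\end{align*}
this gives the claimed $S_{\beta,\gamma,\delta}^{(2)}=\spann\{T_1,T_2\}$. Then, writing $\langle x,y\rangle_{\tau^{-1}}^{\KMS}=\tfrac12\Tr(x^*y)$, I would directly verify that $\Tr(T_1^*T_2)=0$ (the diagonal entries cancel as $i\gamma\beta_-\beta_+-i\gamma\beta_+\beta_-$), and compute $\|T_1\|^2=1+\beta^2=c_1$ and $\|T_2\|^2=1+|\delta|^2+|\gamma|^2(1+\beta^2)=c_2$. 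So $\{c_1^{-1/2}T_1,c_2^{-1/2}T_2\}$ is a KMS-ONB of $S$, and Remark \ref{pre::rem:adjacency_matrix_A_as_Kronecker_product} identifies
$$A_{\beta,\gamma,\delta}^{(2)}=c_1^{-1}(T_1\otimes\overline{T_1})+c_2^{-1}(T_2\otimes\overline{T_2}).$$
Expanding these two Kronecker products entrywise yields the stated $4\times4$-matrix. This is the calculation-heaviest step, but purely routine.

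For the three properties, I would work with the basis $\{T_1,T_2\}$. For (KMS/GNS-)\emph{undirectedness}, I need $T_1^*,T_2^*\in S$. Writing $T_1^*=aT_1+bT_2$, the $(1,1)$- and $(2,2)$-entries give $b\delta_\pm=0$, which (after distinguishing whether $\delta=\pm1$) forces $b=0$; the off-diagonal entries then give $a\beta_+=\beta_-$ and $a\beta_-=\beta_+$, hence $\beta_+^2=\beta_-^2$, i.e.\ $\beta=0$. Conversely, if $\beta=0$ then $T_1^*=T_1$. Now under $\beta=0$, solving $T_2^*=aT_1+bT_2$ by adding/subtracting the diagonal entries yields $b=1$ and $\delta=\overline{\delta}$, while adding/subtracting the off-diagonal entries yields $a=0$ and $\gamma=\overline{\gamma}$. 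This gives the characterization. For \emph{reflexivity}, writing $\mathrm{I}_2=aT_1+bT_2$ and comparing diagonal entries gives $b\delta_+=b\delta_-=1$, hence $\delta=0$ and $b=1$; the off-diagonal entries then force $a\beta_+=i\gamma\beta_-$ and $a\beta_-=-i\gamma\beta_+$, and eliminating $a$ yields $\gamma(\beta_+^2+\beta_-^2)=0$, hence $\gamma=0$ (and then $a=0$ works). Finally, \emph{loopfree}-ness would require $\langle\mathrm{I}_2,T_2\rangle_{\tau^{-1}}^{\KMS}=\tfrac12(\delta_++\delta_-)=1=0$, which is impossible, so $\mathcal{G}_{\beta,\gamma,\delta}^{(2)}$ is never loopfree.

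The only technical obstacle I foresee is the bookkeeping in the Kronecker-product expansion for $A_{\beta,\gamma,\delta}^{(2)}$; the remaining parts are short linear-algebra arguments, essentially the two-edge analogue of the one-edge computation carried out in Proposition \ref{t1e::prop:properties_of_G_alpha,beta^(1B)}.
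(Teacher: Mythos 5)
Your proposal is correct and follows essentially the same route as the paper: identify $T_1,T_2$, verify they form a KMS-orthogonal basis with $\|T_1\|^2=c_1$, $\|T_2\|^2=c_2$, read off $A=c_1^{-1}(T_1\otimes\overline{T_1})+c_2^{-1}(T_2\otimes\overline{T_2})$, and settle the three properties by solving small linear systems. The only cosmetic differences are that the paper performs the undirected/reflexive checks in the Pauli-space coordinates (exploiting orthogonality of the coordinate vectors in $\C^4$) rather than entrywise on the matrices, and disposes of the loopfree claim by the standing assumption $V\not\perp e_1$ instead of your direct computation $\langle\mathrm{I}_2,T_2\rangle_{\tau^{-1}}^{\KMS}=1\neq0$; both variants are sound.
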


\begin{proof}
    The proof is similar to the proof of Proposition \ref{t1e::prop:properties_of_G_alpha,beta^(1B)}.
    To simplify notation, we omit the indices in the proof.
    Using the definition of the Pauli space of a quantum graph from Section \ref{sec::quantum_edge_spaces_and_Pauli_matrices}, the quantum edge space of $\mathcal{G}$ is
    \begin{align*}
        S \vcentcolon
        =
        \spann\left\{
            \begin{pmatrix}
                0&\beta_+\\
                \beta_-&0
            \end{pmatrix},
            \begin{pmatrix}
                \delta_+&-i\gamma\beta_-\\
                i\gamma\beta_+&\delta_-
            \end{pmatrix}
            \right\}.
    \end{align*}
    Denote
    \begin{align*}
        T_1 := \begin{pmatrix}
                0&\beta_+\\
                \beta_-&0
            \end{pmatrix}
        \quad \text{ and } \quad 
        T_2 := \begin{pmatrix}
                \delta_+&-i\gamma\beta_-\\
                i\gamma\beta_+&\delta_-
            \end{pmatrix}.
    \end{align*}
    To calculate the adjacency matrix, we normalize $T_1$ and $T_2$ with respect to the KMS-inner product and get
    \begin{align*}
        \|T_1\|^2=
        \frac{1}{2}\Tr\left(
        \begin{pmatrix}
            0&1-\beta\\
            1+\beta&0
        \end{pmatrix}
        \begin{pmatrix}
            0&1+\beta\\
            1-\beta&0
        \end{pmatrix}
        \right)
        =1+\beta^2
        = c_1
    \end{align*}
    and
    \begin{align*}
        \|T_2\|^2
        &=
        \frac{1}{2}\Tr\left(
        \begin{pmatrix}
            1+\overline{\delta}&-i\overline{\gamma}(1+\beta)\\
            i\overline{\gamma}(1-\beta)&1-\overline{\delta}
        \end{pmatrix}
        \begin{pmatrix}
            1+\delta&-i\gamma(1-\beta)\\
            i\gamma(1+\beta)&1-\delta
        \end{pmatrix}
        \right)\\
        &=1+|\delta|^2+|\gamma|^2(1+\beta^2)
         = c_2.
    \end{align*}
    With Proposition \ref{pre::prop:S->A} and Remark \ref{pre::rem:adjacency_matrix_A_as_Kronecker_product}, we get the adjacency matrix
    \begin{align*}
        A \vcentcolon= 
        c_1^{-1}
        \begin{pmatrix}
            0&\beta_+\\
            \beta_-&0
        \end{pmatrix}
        \otimes
        \begin{pmatrix}
            0&\beta_+\\
            \beta_-&0
        \end{pmatrix}
        +c_2^{-1}
        \begin{pmatrix}
            \delta_+&-i\gamma\beta_-\\
            i\gamma\beta_+&\delta_-
        \end{pmatrix}
        \otimes
        \begin{pmatrix}
            \overline{\delta}_+&i\overline{\gamma}\beta_-\\
            -i\overline{\gamma}\beta_+&\overline{\delta}_-
        \end{pmatrix},
    \end{align*}
    and the statement follows by calculating the Kronecker product.

    Recall that the quantum graph $\mathcal{G}$ is undirected if and only if $\mathcal{G}$ is KMS-undirected if and only if $\mathcal{G}$ is GNS-undirected. By Definition \ref{pre::def:GNS-KMS-undirected_quantum_graphs}, this is equivalent to $S = S^\ast$. Observe that $T_1^\ast \in S$ is equivalent to $T_1^\ast=\mu_1T_1+\nu_1 T_2$ for some $\mu_1,\nu_1\in\C$. Switching to the Pauli space this holds if and only if
    \begin{align*}
        \begin{pmatrix}
            0\\1\\-i\beta\\0
        \end{pmatrix}
        =
        \mu_1\begin{pmatrix}
            0\\1\\i\beta\\0
        \end{pmatrix}
        +\nu_1\begin{pmatrix}
            1\\i\beta\gamma\\ \gamma\\ \delta
        \end{pmatrix}
    \end{align*}
    which is equivalent to $\mu_1=1$, $\nu_1=0$ and $\beta=0$. Analogously one has $T_2^\ast=\mu_2T_1+\nu_2 T_2$ for some $\mu_2,\nu_2\in\C$ if and only if
    \begin{align*}
        \begin{pmatrix}
            1\\0\\ \overline{\gamma}\\ \overline{\delta}
        \end{pmatrix}
        =\mu_2\begin{pmatrix}
            0\\1\\0\\0
        \end{pmatrix}
        +\nu_2\begin{pmatrix}
            1\\0\\ \gamma\\ \delta
        \end{pmatrix}
    \end{align*}
    if and only if $\mu_2=0$, $\nu_2=1$ and $\gamma,\delta\in\R$. Thus, $\mathcal{G}$ is undirected if and only if $\beta=0$ and $\gamma, \delta \in \R$.
    
    Further, recall that $\mathcal{G}$ is reflexive if and only if $\mathrm{I}_2\in S$. Equivalently, the equation
    \begin{align*}
        \begin{pmatrix}
            1\\0\\0\\0
        \end{pmatrix}
        =\mu\begin{pmatrix}
            0\\1\\i\beta\\0
        \end{pmatrix}
        +\nu\begin{pmatrix}
            1\\i\beta\gamma\\ \gamma\\ \delta
        \end{pmatrix}
    \end{align*}
    is fulfilled for some $\mu,\nu\in\C$ if and only if
    $\nu=1$, $\mu=0$ and $\gamma=\delta=0$ since $T_1$ and $T_2$ are orthogonal.
    By assumption, the graph $\mathcal{G}$ is never loopfree.
\end{proof}

\begin{remark}
    In view of the previous proposition and Theorem \ref{t2e::thm:classification_of_t2e_qgraphs} any reflexive quantum graph with two quantum edges on $(M_2, \tau)$ is isomorphic to exactly one of the quantum graphs $\mathcal{G}_{\beta, 0, 0}^{(2)}$ with $\beta \in [0,1]$. These quantum graphs have adjacency matrix
    \begin{align*}
        A_{\beta,0,0}^{(2)}
        =
        \begin{pmatrix}
            1&0&0&\frac{1+\beta}{1-\beta}\\
            0&1&1&0\\
            0&1&1&0\\
            \frac{1-\beta}{1+\beta}&0&0&1
        \end{pmatrix}.
    \end{align*}
    The only undirected one of these quantum graphs is $\mathcal{G}_{0,0,0}^{(2)}$ with adjacency matrix
    \begin{align*}
        A_{0,0,0}^{(2)}
        =
        \begin{pmatrix}
            1&0&0&1\\
            0&1&1&0\\
            0&1&1&0\\
            1&0&0&1
        \end{pmatrix}.
    \end{align*}
\end{remark}

\section{Nontracial Quantum Graphs with One Quantum Edge}
\label{sec::n1e}
%--------------------------------------------------------------------------------------------------------

In this section, we classify the nontracial quantum graphs on $(M_2, \psi_q)$ for $q \in (0,1)$. This is analogous to the classification of quantum graphs with one quantum edge on $(M_2, \tau)$ from Section \ref{sec::t1e}. In particular, we obtain undirected and/or loopfree quantum graphs as special cases and we recover Matsuda's result that there is exactly one GNS-undirected loopfree quantum graph with one quantum edge on $(M_2, \psi_q)$ \cite{matsuda_classification_2022}. We also observe that there exists a larger number of loopfree, KMS-undirected quantum graphs with one quantum edge on $(M_2, \psi_q)$. In fact, these quantum graphs include the GNS-undirected loopfree quantum graphs as well as a family of quantum graphs indexed by a single nonnegative number $\gamma \geq 0$, see Remark \ref{ntg::rmk:KMS-undirected_qgraphs_with_one_edge_on_(M2,psi_q)}.

As usual, we start with a classification of Pauli spaces. Different to the previous calculations, however, we aim at classifying the $q$-adjusted Pauli spaces from Definition \ref{pre::def:q_adjusted_Pauli_space}. This makes no difference for the classification problem but allows for a better handling of loopfree quantum graphs later on. Following Section \ref{sec::quantum_edge_spaces_and_Pauli_matrices}, we are interested in lines, i.e. one-dimensional subspaces, in $\C^4$ up to a rotation of the middle two components.

\begin{lemma}
\label{ntg::lemma:Pauli_spaces_for_qgraphs_with_one_edge_on_M2,psi_q}
    For every line in $V\in\C^4$ exactly one of the following is true.
    \begin{enumerate}[label=(\Alph*)]
        \item\label{enumA::trace2edges:index_of_plane_in_C4} It is $\displaystyle V  = \spann \{e_1\}=\vcentcolon V^{(q,1A)}$.
        \item\label{enumB::trace2edges:index_of_plane_in_C4} There is an $\alpha \in \C$ such that 
        $\displaystyle V 
        =\spann\left\{ \begin{pmatrix}
            \alpha \\ 0 \\ 0 \\ 1
        \end{pmatrix} \right\}
        =\vcentcolon V^{(q,1B)}_{\alpha}.$
        \item\label{enumC::trace2edges:index_of_plane_in_C4} There are $\alpha, \gamma \in \C$ and $\beta \in [-1, 1]$ such that
        \begin{align*}
        \left[\begin{array}{c|c|c}
            1 & 0 & 0\\
            \hline
            0 & R & 0 \\
            \hline
            0 & 0 & 1
            \end{array}\right]
        \displaystyle V
        =  \spann\left\{ \begin{pmatrix}
            \alpha \\ 1 \\ i \beta \\ \gamma
        \end{pmatrix} \right\}
        =\vcentcolon V^{(q,1C)}_{\alpha,\beta,\gamma}
        \end{align*}
        holds for some $R \in \SO(2)$ and
        \begin{align}
        \label{tra::eq:J^(1,qC)}\tag{$\ast$}
        \begin{aligned}
                \beta = \pm 1, \alpha \neq 0 \quad &\implies \quad \alpha \in (0, \infty), \\
                \beta = \pm 1, \alpha = 0 \quad &\implies \quad \gamma \in [0, \infty), \\
                \beta \in (-1,1), \alpha \neq 0 \quad &\implies \quad \mathrm{arg}(\alpha) \in [0, \pi), \\
                \beta \in (-1,1), \alpha = 0 \quad &\implies \quad \mathrm{arg}(\gamma) \in [0, \pi).
            \end{aligned}
        \end{align}
        Denote the index set $
        J^{(q,1C)}\vcentcolon=
        \{(\alpha,\beta,\gamma)\vcentcolon\text{\eqref{tra::eq:J^(1,qC)} is fulfilled}\}$.
    \end{enumerate}
    Further, the numbers $\alpha,\beta,\gamma$ in \ref{enumB::trace2edges:index_of_plane_in_C4} and/or \ref{enumC::trace2edges:index_of_plane_in_C4}, respectively, are unique.
\end{lemma}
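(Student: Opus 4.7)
The plan is to split on whether the middle two components of a spanning vector $v = (v_0, v_1, v_2, v_3)^t$ of $V$ vanish, since the block-diagonal transformation $\mathrm{diag}(1, R, 1)$ with $R \in \SO(2)$ acts only on those middle components, so the condition $(v_1, v_2) = 0$ is invariant.

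For existence, if $(v_1, v_2) = (0,0)$, then $v = (v_0, 0, 0, v_3)^t$. When $v_3 = 0$, we are forced into case \ref{enumA::trace2edges:index_of_plane_in_C4}; when $v_3 \neq 0$, rescaling to set $v_3 = 1$ puts us in case \ref{enumB::trace2edges:index_of_plane_in_C4} with $\alpha = v_0/v_3$. If instead $(v_1, v_2) \neq (0,0)$, I apply Lemma \ref{lin::lemma:existence_of_index_beta_for_lines_in_C3} (second part) to $(v_1, v_2)^t \in \C^2$ to obtain $\beta \in [-1,1]$, $\lambda \in \C$ and $R \in \SO(2)$ with $\lambda R (v_1, v_2)^t = (1, i\beta)^t$; setting $\alpha = \lambda v_0$ and $\gamma = \lambda v_3$ yields the representative in case \ref{enumC::trace2edges:index_of_plane_in_C4}.

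The main work is uniqueness in case \ref{enumC::trace2edges:index_of_plane_in_C4}. Suppose two triples $(\alpha, \beta, \gamma)$ and $(\alpha', \beta', \gamma')$ and rotations $R, R' \in \SO(2)$ realize the same line $V$. Then
\begin{align*}
    \begin{pmatrix} \alpha \\ 1 \\ i\beta \\ \gamma \end{pmatrix} = \mu \left[\begin{array}{c|c|c} 1 & 0 & 0 \\ \hline 0 & R_0 & 0 \\ \hline 0 & 0 & 1 \end{array}\right] \begin{pmatrix} \alpha' \\ 1 \\ i\beta' \\ \gamma' \end{pmatrix}
\end{align*}
for some $\mu \in \C$ and $R_0 \in \SO(2)$. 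The middle two rows reduce to $\mu R_0 (1, i\beta')^t = (1, i\beta)^t$, to which I apply Lemma \ref{lin::lemma:uniqueness_of_lambda_in_C2}. That lemma forces $\beta = \beta'$ and isolates exactly the residual freedom: $\mu \in S^1$ arbitrary (with $R_0$ the corresponding rotation) when $\beta = \pm 1$, and $\mu = \pm 1$ with $R_0 = \mu \mathrm{I}_2$ when $\beta \in (-1,1)$. The outer components transform as $\alpha = \mu \alpha'$ and $\gamma = \mu \gamma'$. The four subcases of the normalization \eqref{tra::eq:J^(1,qC)} are then precisely the canonical choices that kill this residual freedom: when $\beta = \pm 1$ we use the $S^1$-freedom to put $\alpha$ (or, if $\alpha = 0$, $\gamma$) into $[0, \infty)$; when $\beta \in (-1,1)$ we use the sign to put $\arg(\alpha)$ (or $\arg(\gamma)$) into $[0, \pi)$.

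Finally, the three cases are mutually exclusive: any spanning vector in case \ref{enumC::trace2edges:index_of_plane_in_C4} has middle components equal to $(1, i\beta) \neq 0$, and since $\SO(2)$ preserves nonvanishing vectors, no representative of \ref{enumC::trace2edges:index_of_plane_in_C4} can collapse to the form in \ref{enumA::trace2edges:index_of_plane_in_C4} or \ref{enumB::trace2edges:index_of_plane_in_C4}; and \ref{enumA::trace2edges:index_of_plane_in_C4} is separated from \ref{enumB::trace2edges:index_of_plane_in_C4} by whether the last component vanishes. Uniqueness of $\alpha$ in case \ref{enumB::trace2edges:index_of_plane_in_C4} is immediate from the normalization that fixes the last component to $1$. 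The only real obstacle in this proof is the case analysis dictated by Lemma \ref{lin::lemma:uniqueness_of_lambda_in_C2}; everything else is bookkeeping.
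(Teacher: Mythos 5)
Your proposal is correct and follows essentially the same route as the paper's proof: split on whether the middle two components vanish (handling cases \ref{enumA::trace2edges:index_of_plane_in_C4} and \ref{enumB::trace2edges:index_of_plane_in_C4} directly), reduce case \ref{enumC::trace2edges:index_of_plane_in_C4} to the classification of lines in $\C^2$ to produce $\beta$, and then invoke Lemma \ref{lin::lemma:uniqueness_of_lambda_in_C2} to pin down the residual $S^1$- or sign-freedom that the normalization \eqref{tra::eq:J^(1,qC)} removes. The only cosmetic difference is that you cite the existence statement of Lemma \ref{lin::lemma:existence_of_index_beta_for_lines_in_C3} where the paper cites Lemma \ref{lin::lemma:lines_in_C2_via_Möbius_trafo}; these are interchangeable here.
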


\begin{proof}
    If $V$ is orthogonal to $\spann\{e_2, e_3\}$, then it is not hard to check that either \ref{enumA::trace2edges:index_of_plane_in_C4} or \ref{enumB::trace2edges:index_of_plane_in_C4} is true, and the number $\alpha$ in \ref{enumB::trace2edges:index_of_plane_in_C4} is unique. Otherwise, we have $V = \spann \{v\}$ for some vector $v \in \C^4$ such that its middle two components are not both vanishing. By Lemma \ref{lin::lemma:lines_in_C2_via_Möbius_trafo}, there is a unique $\beta \in [-1, 1]$ such that
    \begin{align}
    \label{ntg::eq:canonical_vector_with_alpha_beta_gamma_for_one_edge}
        \begin{pmatrix}
            \alpha \\ 1 \\ i \beta \\ \gamma
        \end{pmatrix}
        =
        \lambda \left[\begin{array}{c|c|c}
            1 & 0 & 0\\
            \hline
            0 & R & 0 \\
            \hline
            0 & 0 & 1
        \end{array}\right]
        v
    \end{align}
    holds for some $R \in \SO(2), \lambda \in \C$ and $\alpha, \gamma \in \C$. To investigate uniqueness of the numbers $\alpha, \gamma$, assume that we have
    \begin{align} 
    \label{ntg::eq:uniqueness_of_alpha_gamma}
        \begin{pmatrix}
            \alpha \\ 1 \\ i \beta \\ \gamma
        \end{pmatrix}
        = \lambda^\prime \left[\begin{array}{c|c|c}
            1 & 0 & 0\\
            \hline
            0 & R^\prime & 0 \\
            \hline
            0 & 0 & 1
            \end{array}\right] \begin{pmatrix}
                \alpha^\prime \\ 1 \\ i \beta' \\ \gamma^\prime
            \end{pmatrix}
    \end{align}
    for some $\lambda^\prime \in \C, R^\prime \in \SO(2)$ as well as $\alpha^\prime, \gamma^\prime \in \C$, $\beta'\in[0,1]$. 
    Lemma \ref{lin::lemma:uniqueness_of_lambda_in_C2} tells us that this holds if and only if one of the following is true.
    \begin{enumerate}[label=(\alph*)]
        \item $\beta = \beta^\prime = \pm 1$, $\lambda^\prime = x + iy \in S^1$ and $\displaystyle R^\prime = \begin{pmatrix}
            x & -\beta y \\
            \beta y & x \\
        \end{pmatrix}$.
        \item $\beta = \beta^\prime \in (-1,1)$, $\lambda^\prime = \pm1$ and $\displaystyle R^\prime = \lambda'\mathrm{I}_2$.
    \end{enumerate}
    In any event, it is $\beta = \beta^\prime$ as well as $\alpha = \lambda^\prime \alpha^\prime$ and $\gamma = \lambda^\prime \gamma^\prime$. Thus, in the case $\beta = \pm 1$, the numbers $\alpha$ and $\gamma$ in (\ref{ntg::eq:canonical_vector_with_alpha_beta_gamma_for_one_edge}) are unique up to multiplication with a scalar from $S^1$; while in the latter case $\alpha$ and $\gamma$ are unique up to a sign. The statement follows immediately. Indeed, by multiplying with a suitable scalar, in the first situation one can choose the pair $(\alpha, \gamma)$ canonically such that either $\alpha \in (0, \infty)$ or $\gamma \in [0, \infty)$ if $\alpha = 0$. In the second situation one can choose the pair canonically such that $\mathrm{arg}(\alpha) \in [0, \pi)$ or $\mathrm{arg}(\gamma) \in [0, \pi)$ if $\alpha = 0$.
\end{proof}

Using the previous lemma, we can classify the nontracial quantum graphs on $(M_2, \psi_q)$ with one quantum edge.

\comments{
\begin{thm}
    \label{ntg::thm:classification_nontracial_qgraphs_with_one_edge}
         Let $\mathcal{G}_{\alpha, \beta, \gamma}^{(q, 1C)}$ be the quantum graph with $q$-adjusted Pauli space 
    \begin{align*}
        V_{\alpha, \beta, \gamma}^{(q, 1C)} = \spann\left\{ \begin{pmatrix}
            \alpha \\ 1 \\ i \beta \\ \gamma
        \end{pmatrix} \right\}.
    \end{align*}
    Every quantum graph $\mathcal{G}$ on $(M_2, \psi_q)$ with exactly one quantum edge is either the trivial quantum graph $\mathcal{G}^{(q,1A)}$ or isomorphic to exactly one of the quantum graphs $\mathcal{G}^{(q,1B)}_\alpha$, $\mathcal{G}_{\alpha, \beta, \gamma}^{(q,1C)}$, where $\alpha \in \C$, and $(\alpha, \beta, \gamma) \in J^{(q,1C)}$, respectively.
\end{thm}
}

\begin{thm}
    \label{ntg::thm:classification_nontracial_qgraphs_with_one_edge}
    Let $\mathcal{G}^{(q, 1A)}$, $\mathcal{G}_{\alpha}^{(q, 1B)}$ and $\mathcal{G}_{\alpha, \beta, \gamma}^{(q, 1C)}$ be the quantum graphs on $(M_2, \psi_q)$ with $q$-adjusted Pauli spaces $V^{(q, 1A)}$, $V_{\alpha}^{(q, 1B)}$ and $V_{\alpha, \beta, \gamma}^{(q, 1C)}$, respectively.
    Every quantum graph $\mathcal{G}$ on $(M_2, \psi_q)$ with exactly one quantum edge is either the trivial quantum graph $\mathcal{G}^{(q,1A)}$ or isomorphic to exactly one of the quantum graphs $\mathcal{G}^{(q,1B)}_\alpha$, $\mathcal{G}_{\alpha, \beta, \gamma}^{(q,1C)}$ where $\alpha \in \C$ and $(\alpha, \beta, \gamma) \in J^{(q,1C)}$.
\end{thm}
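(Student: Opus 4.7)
The plan is to mirror the proof of Theorem~\ref{t1e::thm:classification_of_1te_qgraphs}, replacing the tracial ingredients by their $q$-adjusted analogues. Given any quantum graph $\mathcal{G}$ on $(M_2, \psi_q)$ with exactly one quantum edge, I would first form its $q$-adjusted Pauli space $V^{(q)} \subset \C^4$ from Definition~\ref{pre::def:q_adjusted_Pauli_space}. Because $\mathcal{G}$ has one quantum edge, $V^{(q)}$ is one-dimensional, i.e.\ a line, and Lemma~\ref{ntg::lemma:Pauli_spaces_for_qgraphs_with_one_edge_on_M2,psi_q} applies and tells us that exactly one of the three alternatives (A), (B), (C) holds for $V^{(q)}$.

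In case (A), $V^{(q)} = V^{(q,1A)} = \spann\{e_1\}$ translates into the quantum edge space $\spann\{\sigma_0\} = \spann\{\mathrm{I}_2\}$, which by Example~\ref{pre::example:empty_trivial_complete_qgraph} is the trivial quantum graph, so $\mathcal{G} = \mathcal{G}^{(q,1A)}$. In case (B), the lemma produces a unique $\alpha \in \C$ with $V^{(q)} = V_\alpha^{(q,1B)}$; since the Pauli space determines the graph, $\mathcal{G} = \mathcal{G}_\alpha^{(q,1B)}$ by definition. In case (C), the lemma furnishes a rotation $R \in \SO(2)$ and a unique triple $(\alpha, \beta, \gamma) \in J^{(q,1C)}$ such that the block rotation from Lemma~\ref{pau::lemma:q_adjusted_Pauli_spaces_of_isomorphic_nontracial_qgraphs} carries $V^{(q)}$ to $V_{\alpha, \beta, \gamma}^{(q,1C)}$, and that lemma then delivers the desired isomorphism $\mathcal{G} \cong \mathcal{G}_{\alpha, \beta, \gamma}^{(q,1C)}$.

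For mutual exclusiveness, I would observe that the block rotation of the middle two components from Lemma~\ref{pau::lemma:q_adjusted_Pauli_spaces_of_isomorphic_nontracial_qgraphs} leaves the first and the fourth coordinates untouched. Consequently the three types (A), (B), (C) are themselves invariant under isomorphism, ruling out any cross-type identification. Within case (B) the same rotations act trivially on $V^{(q)}$ (the middle components vanish), so the parameter $\alpha$ is an isomorphism invariant by the uniqueness part of Lemma~\ref{ntg::lemma:Pauli_spaces_for_qgraphs_with_one_edge_on_M2,psi_q}; within case (C) the uniqueness of $(\alpha, \beta, \gamma) \in J^{(q,1C)}$ is already built into that lemma.

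The whole argument is a straightforward bookkeeping exercise once Lemmas~\ref{ntg::lemma:Pauli_spaces_for_qgraphs_with_one_edge_on_M2,psi_q} and~\ref{pau::lemma:q_adjusted_Pauli_spaces_of_isomorphic_nontracial_qgraphs} are at hand. The only mildly delicate point, compared to the tracial Theorem~\ref{t1e::thm:classification_of_1te_qgraphs}, is the emergence of a third type: since the $q$-adjusted automorphism group only rotates the $\sigma_1, \sigma_2$ plane rather than the entire $\sigma_1, \sigma_2, \sigma_3^{(q)}$ space, lines whose $\sigma_3^{(q)}$-component is essential cannot be absorbed into the other types, which is exactly what forces the split into (B) and (C). I expect no further obstacle beyond correctly invoking the uniqueness statements in the proper order.
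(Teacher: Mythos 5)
Your proposal is correct and follows essentially the same route as the paper: reduce to the classification of lines in Lemma \ref{ntg::lemma:Pauli_spaces_for_qgraphs_with_one_edge_on_M2,psi_q} via the isomorphism criterion of Lemma \ref{pau::lemma:q_adjusted_Pauli_spaces_of_isomorphic_nontracial_qgraphs}. The paper states this in two sentences, while you additionally spell out the case analysis and the mutual-exclusiveness bookkeeping, all of which checks out.
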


\begin{proof}
    Let $\mathcal{G}$ be an arbitrary quantum graph on $(M_2, \psi_q)$ with $q$-adjusted Pauli space $V \subset \C^4$. By Lemma \ref{pau::lemma:q_adjusted_Pauli_spaces_of_isomorphic_nontracial_qgraphs}, another quantum graph $\mathcal{G}^\prime$ on $(M_2, \psi_q)$ with $q$-adjusted Pauli space $V^\prime$ is isomorphic to $\mathcal{G}$ if and only if
    \begin{align*}
        \left[\begin{array}{c|c|c}
            1& 0 & 0 \\
            \hline
            0 & R & 0 \\
            \hline 
            0 & 0 & 1
            \end{array}\right]
            V^\prime = V.
    \end{align*}
    holds for some $R \in \SO(2)$. Consequently, the statement follows from Lemma \ref{ntg::lemma:Pauli_spaces_for_qgraphs_with_one_edge_on_M2,psi_q}.
\end{proof}

%Up to isomorphism, there are three Pauli spaces relevant to describe an arbitrary quantum graph $\mathcal{G}$ on $(M_2, \psi_q)$, namely $V^{(q,1A)}$, $V^{(q,1B)}_{\alpha}$ with $\alpha\in\C$ and $V^{(q,1C)}_{\alpha,\beta,\gamma}$ with $(\alpha,\beta,\gamma)\in J^{(1,qC)}$.
%Recall that $V^{(q,1A)}=\spann \{e_1\}$ is the trivial quantum graph discussed in \ref{pre::example:properties}.
%In the following proposition, we discuss $V^{(q,1B)}_{\alpha}$ with $\alpha\in\C$.

In what follows, we discuss the quantum graphs $\mathcal{G}_\alpha^{(q,1B)}$ and $\mathcal{G}_{\alpha, \beta, \gamma}^{(q, 1C)}$ in greater detail. Let us start with the quantum graphs $\mathcal{G}_{\alpha}^{(q,1B)}$.

\begin{proposition}
\label{ntg::prop:qgraph_G_alpha_on_(M2,psi_q)_with_one_edge}
    Let $\alpha\in\C$.
    The quantum graph $\mathcal{G}_\alpha^{(q,1B)}$ has quantum edge space
    \begin{align*}
        S_\alpha^{(q,1B)} = \span\left\{ \begin{pmatrix}
            \alpha + q^{-2} & 0 \\
            0 & \alpha - 1
        \end{pmatrix} \right\}
    \end{align*}
    and quantum adjacency matrix
    \begin{align*}
        A_\alpha^{(q,1B)} = c^{-1}\begin{pmatrix}
            |\alpha+q^{-2}|^2 & 0 & 0 & 0 \\
            0 & (\alpha+q^{-2})\overline{(\alpha-1)} & 0 & 0 \\
            0 & 0 & \overline{(\alpha+q^{-2})}(\alpha-1) & 0 \\
            0 & 0 & 0 & |\alpha-1|^2
        \end{pmatrix},
    \end{align*}
    where 
    \begin{align*}
    c= \frac{1}{1+q^2}\left( q^2|\alpha+q^{-2}|^2 + |\alpha-1|^2 \right).
    \end{align*}
    Its spectrum is
    \begin{align*}
        c & \left\{ |\alpha+q^{-2}|^2, (\alpha+q^{-2})\overline{(\alpha-1)}, \overline{(\alpha+q^{-2})}(\alpha-1), |\alpha-1|^2\right\}.
    \end{align*}
    The quantum graph $\mathcal{G}_\alpha^{(q,1B)}$ is
    \begin{itemize}
        \item GNS-undirected if and only if it is KMS-undirected if and only if $\alpha \in \R$,
        \item never reflexive,
        \item loopfree if and only if $\alpha = 0$.
    \end{itemize}
\end{proposition}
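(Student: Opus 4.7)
The plan is to carry out each claim by reducing to a diagonal calculation, since the special feature of this class is that the $q$-adjusted Pauli space sits entirely in the $\sigma_0$--$\sigma_3^{(q)}$ plane.

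First I would read off the quantum edge space from the definition of the $q$-adjusted Pauli space in Definition \ref{pre::def:q_adjusted_Pauli_space}: the spanning vector $(\alpha,0,0,1)^t$ corresponds to the matrix
\[
T := \alpha\sigma_0 + \sigma_3^{(q)} = \begin{pmatrix} \alpha+q^{-2} & 0 \\ 0 & \alpha-1 \end{pmatrix},
\]
so $S_\alpha^{(q,1B)} = \spann\{T\}$. To compute the quantum adjacency matrix via Proposition \ref{pre::prop:S->A} and Remark \ref{pre::rem:adjacency_matrix_A_as_Kronecker_product}, I would first normalize $T$ in the KMS-inner product induced by $\psi_q^{-1}$. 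Since $\rho_q = (1+q^2)\diag(q^{-2},1)$ is diagonal and $T$ is diagonal, $T$ commutes with $\rho_q^{\pm 1/2}$, so $\|T\|^2_{\psi_q^{-1}} = \Tr(T^*\rho_q^{-1}T) = \frac{1}{1+q^2}(q^2|\alpha+q^{-2}|^2 + |\alpha-1|^2) = c$.

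For the adjacency matrix itself, the formula from Remark \ref{pre::rem:adjacency_matrix_A_as_Kronecker_product} reads $A = c^{-1}(\rho_q^{-1/4}T\rho_q^{1/4}) \otimes \overline{(\rho_q^{-1/4}T\rho_q^{1/4})}$. Since $T$ is diagonal, the conjugation by $\rho_q^{\pm 1/4}$ collapses to the identity, so $A = c^{-1}\,T \otimes \overline{T}$, which when expanded gives exactly the stated diagonal matrix. The spectrum claim then follows trivially because $A$ is diagonal, so its eigenvalues are its diagonal entries.

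For the three structural properties I would argue as follows. Since $T$ is diagonal, $\rho_q S_\alpha^{(q,1B)} \rho_q^{-1} = S_\alpha^{(q,1B)}$ automatically, so GNS-undirectedness and KMS-undirectedness both reduce by Definition \ref{pre::def:GNS-KMS-undirected_quantum_graphs} to $T^* \in \mathbb{C}T$, i.e.\ the existence of $\lambda \in \C$ with $\overline{\alpha+q^{-2}} = \lambda(\alpha+q^{-2})$ and $\overline{\alpha-1} = \lambda(\alpha-1)$; comparing these forces $2i\,\mathrm{Im}(\alpha)(1+q^{-2}) = 0$, hence $\alpha \in \R$. For reflexivity, $\mathrm{I}_2 \in \mathbb{C}T$ would require $\alpha+q^{-2} = \alpha-1$, which is impossible, and for loopfreeness Lemma \ref{pau::lemma:q_adjusted_Pauli_spaces_of_isomorphic_nontracial_qgraphs} reduces the condition to $V_\alpha^{(q,1B)} \perp e_1$, i.e.\ $\alpha = 0$. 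The main obstacle is essentially bookkeeping; the commutativity of $T$ with $\rho_q$ trivializes what would otherwise be messy conjugations.
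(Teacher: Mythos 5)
Your proposal is correct and follows essentially the same route as the paper's proof: both identify $T=\alpha\sigma_0+\sigma_3^{(q)}$, exploit that $T$ and $\rho_q$ are diagonal to get $\|T\|^2=c$ and $A=c^{-1}T\otimes\overline{T}$, note that $\rho_q S\rho_q^{-1}=S$ holds automatically so GNS- and KMS-undirectedness coincide and reduce to $T^*\in\C T$, and handle reflexivity and loopfreeness by the same one-line observations. The only (immaterial) difference is that you invoke Lemma \ref{pau::lemma:q_adjusted_Pauli_spaces_of_isomorphic_nontracial_qgraphs} for loopfreeness where the paper directly computes $\langle T,\mathrm{I}_2\rangle^{KMS}_{\psi_q^{-1}}$.
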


\begin{proof}
    Let us skip the indices in the proof.
    The quantum edge space $S$ is obtained directly from the definition of the Pauli space. For
    \begin{align*}
        T := \begin{pmatrix}
            \alpha + q^{-2} & 0 \\
            0 & \alpha - 1
        \end{pmatrix}
    \end{align*}
    one observes 
    \begin{align*}
        \left(\|T\|_{\psi_q^{-1}}^{KMS}\right)^2 &= \langle T, T \rangle_{\psi_q^{-1}}^{KMS} 
            = \mathrm{Tr}\left( T^\ast \rho_q^{-\frac{1}{2}} T \rho_q^{-\frac{1}{2}} \right)\\
            &= \frac{1}{1+q^2} \left( q^2 |\alpha+q^{-2}|^2 + |\alpha-1|^2 \right) 
            = c.
    \end{align*}
    \comments{
    \begin{align*}
        \left(\|T\|_{\psi_q^{-1}}^{KMS}\right)^2 &= \langle T, T \rangle_{\psi_q^{-1}}^{KMS} \\
            &= \mathrm{Tr}\left( T^\ast \rho_q^{-\frac{1}{2}} T \rho_q^{-\frac{1}{2}} \right) \\
            &= \frac{1}{1+q^2} \mathrm{Tr}\left( T^\ast \begin{pmatrix}
                q & 0 \\
                0 & 1
            \end{pmatrix}
            T
            \begin{pmatrix}
                q & 0 \\
                0 & 1
            \end{pmatrix} \right) \\
            &= \frac{1}{1+q^2} \mathrm{Tr} \left( \begin{pmatrix}
                 q(\overline{\alpha+q^{-2}}) & 0\\
                 0 & \overline{\alpha-1}
            \end{pmatrix}
            \begin{pmatrix}
                q(\alpha+q^{-2}) & 0 \\
                0 & \alpha-1
            \end{pmatrix} \right) \\
            &= \frac{1}{1+q^2} \mathrm{Tr} \left( \begin{pmatrix}
                q^2|\alpha+q^{-2}|^2 & 0 \\
                0 & |\alpha-1|^2
            \end{pmatrix}
            \right) \\
            &= \frac{1}{1+q^2} \left( q^2 |\alpha+q^{-2}|^2 + |\alpha-1|^2 \right) \\
            &= c^{-1}.
    \end{align*}
    }
    Then, the quantum adjacency matrix can be computed with Proposition \ref{pre::prop:S->A} and Remark \ref{pre::rem:adjacency_matrix_A_as_Kronecker_product}
    \begin{align*}
    A = c^{-1}(T \otimes \overline{T}) = c^{-1} (\rho_q^{-\frac{1}{4}} T \rho_q^{\frac{1}{4}} \otimes \overline{\rho_q^{-\frac{1}{4}} T \rho_q^{\frac{1}{4}}} ).
    \end{align*}
    Its spectrum can be readily seen for it is a diagonal matrix.  Finally, recall from Definition \ref{pre::def:GNS-KMS-undirected_quantum_graphs} that $\mathcal{G}$ is GNS-undirected if and only if $S = S^\ast$ and $\rho_q S \rho_q^{-1} = S$. The first condition is equivalent to asking $\alpha \in \R$, and the latter is always satisfied for 
    \begin{align*}
        \rho_q T \rho_q^{-1} = T.
    \end{align*}
    In particular, the quantum graph is GNS-undirected if and only if it is KMS-undirected. 
    Further, the quantum graph $\mathcal{G}$ is never reflexive since $q^{-2}\neq-1$ for all $q\in(0,1)$ and thus, $\mathrm{I}_2\notin S$.
    Lastly, $\mathcal{G}$ is loopfree if and only if $\langle T, \mathrm{I}_2 \rangle_{\psi_q^{-1}}^{KMS} = 0$ which is equivalent to $\alpha = 0$.
\end{proof}

Next, we investigate the quantum graphs $\mathcal{G}_{\alpha,\beta,\gamma}^{(q,1C)}$ with $(\alpha,\beta,\gamma)\in J^{(1,qC)}$.

\begin{proposition}
    \label{ntg::prop:qgraph_G_alpha,beta,gamma_with_one_edge}
    Let $(\alpha,\beta,\gamma)\in J^{(1,qC)}$ and denote $\beta_+ := 1+\beta$, $\beta_- := 1 - \beta$, $\alpha_+ := \alpha+q^{-2}\gamma$, $\alpha_- := \alpha-\gamma$.
    The quantum graph $\mathcal{G}_{\alpha,\beta,\gamma}^{(q,1C)}$ has the quantum edge space
    \begin{align*}
        S_{\alpha,\beta,\gamma}^{(q,1C)} = \spann\left\{ \begin{pmatrix}
            \alpha + q^{-2} \gamma & 1 + \beta \\
            1-\beta & \alpha - \gamma
        \end{pmatrix} \right\}
    \end{align*}
    and quantum adjacency matrix 
    \begin{align*}
        A_{\alpha,\beta,\gamma}^{(q,1C)} = c^{-1} \begin{pmatrix}
            |\alpha_+|^2     & q^{\frac{1}{2}} \beta_+ \alpha_+            & q^{\frac{1}{2}}\beta_+ \overline{\alpha_+}              & q \beta_+^2            \\
            q^{-\frac{1}{2}} \beta_- \alpha_+ & \alpha_+ \overline{\alpha_-} & \beta_+ \beta_-                           & q^{\frac{1}{2}}\beta_+ \overline{\alpha_-}   \\
            q^{-\frac{1}{2}}\beta_- \overline{\alpha_+} & \beta_+ \beta_-                          & \alpha_- \overline{\alpha_+}   & q^{\frac{1}{2}}\beta_+ \alpha_-   \\
            q^{-1}\beta_-^2         & q^{-\frac{1}{2}}\beta_- \overline{\alpha_-}            & q^{-\frac{1}{2}} \beta_- \alpha_-              & |\alpha_-|^2        
        \end{pmatrix},
    \end{align*}
    where
    \begin{align*}
        c \vcentcolon= \frac{1}{1+q^2} \left( q^2 |\alpha_+|^2 + 2q (1+\beta^2) + |\alpha_-|^2 \right).
    \end{align*}
    Its spectrum is the set
    \begin{align*}
        c^{-1} \left\{ |s_+|^2, s_+ \overline{s_-}, \overline{s_+} s_-, |s_-|^2\right\},
    \end{align*}
    for
    \begin{align*}
        s_{\pm} := %\alpha \pm \sqrt{\beta_-\beta_+ + \gamma^2}.
            \alpha + \frac{q^{-2}-1}{2} \gamma \pm \sqrt{\left(\frac{q^{-2}+1}{2}\right)^2\gamma^2 + 1-\beta^2}.
    \end{align*}
    The quantum graph $\mathcal{G}_{\alpha,\beta,\gamma}^{(q,1C)}$ is 
    \begin{itemize}
        \item not GNS-undirected,
        \item KMS-undirected if and only if $\beta = 0$ and $\alpha, \gamma \in \R$,
        \item never reflexive,
        \item loopfree if and only if $\alpha = 0$.
    \end{itemize}
\end{proposition}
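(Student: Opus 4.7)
The plan is to mirror the proof of Proposition \ref{t1e::prop:properties_of_G_alpha,beta^(1B)}, replacing the standard Pauli basis with the $q$-adjusted one $\sigma_0, \sigma_1, \sigma_2, \sigma_3^{(q)}$. The quantum edge space is read off directly from the $q$-adjusted Pauli space: the spanning vector $(\alpha,1,i\beta,\gamma)^t$ of $V_{\alpha,\beta,\gamma}^{(q,1C)}$ corresponds to $T \vcentcolon= \alpha\sigma_0 + \sigma_1 + i\beta\sigma_2 + \gamma\sigma_3^{(q)}$, which is exactly the claimed generator of $S_{\alpha,\beta,\gamma}^{(q,1C)}$.

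For the adjacency matrix I would first compute the KMS norm
\begin{align*}
    \|T\|^2 = \langle T, T\rangle_{\psi_q^{-1}}^{\KMS} = \mathrm{Tr}\bigl(T^\ast \rho_q^{-\frac{1}{2}} T \rho_q^{-\frac{1}{2}}\bigr) = \sum_{i,j} |T_{ij}|^2 \bigl(\rho_q^{-\frac{1}{2}}\bigr)_{ii}\bigl(\rho_q^{-\frac{1}{2}}\bigr)_{jj},
\end{align*}
which, after substituting the diagonal entries of $\rho_q$, evaluates to $c$. Proposition \ref{pre::prop:S->A} together with Remark \ref{pre::rem:adjacency_matrix_A_as_Kronecker_product} then yields $A_{\alpha,\beta,\gamma}^{(q,1C)} = c^{-1}\bigl(\tilde T \otimes \overline{\tilde T}\bigr)$ with $\tilde T \vcentcolon= \rho_q^{-1/4} T \rho_q^{1/4}$. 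Since $\rho_q$ is diagonal, $\tilde T$ preserves the diagonal entries $\alpha_+, \alpha_-$ of $T$ and rescales its off-diagonals by $q^{\pm 1/2}$; expanding the Kronecker product entry by entry gives the stated $4\times 4$ matrix. For the spectrum I note that $\tilde T$ is similar to $T$ and thus shares its eigenvalues; applying the quadratic formula to the characteristic polynomial of $T$ and simplifying the discriminant to $(q^{-2}+1)^2\gamma^2 + 4(1-\beta^2)$ produces $s_\pm$, whence $\sigma(A) = c^{-1}\bigl\{s_i \overline{s_j} : i, j \in \{+, -\}\bigr\}$.

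For the remaining properties I first observe that $\mathrm{I}_2 \in \spann\{T\}$ would force $\beta_+ = \beta_- = 0$, which is impossible, so $\mathcal{G}$ is never reflexive. Loopfreeness reduces to $\langle \mathrm{I}_2, T\rangle_{\psi_q^{-1}}^{\KMS} = 0$; expanding $T$ in the $q$-adjusted Pauli basis and using $\langle \mathrm{I}_2, \sigma_i\rangle_{\psi_q^{-1}}^{\KMS} = 0$ for $i = 1, 2$ and $\sigma_3^{(q)}$ (Lemma \ref{pau::lemma:q_adjusted_Pauli_spaces_of_isomorphic_nontracial_qgraphs}) leaves only the $\sigma_0$ contribution, giving $\alpha = 0$.

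The main content lies in the two undirectedness criteria. For KMS-undirectedness I would solve $T^\ast = \lambda T$ entrywise. When $|\beta| < 1$, the off-diagonal equations $\lambda \beta_+ = \beta_-$ and $\lambda \beta_- = \beta_+$ together with $\beta_\pm \geq 0$ force $\lambda = 1$ and $\beta = 0$; the diagonal equations then become $\overline{\alpha_\pm} = \alpha_\pm$, equivalent to $\alpha, \gamma \in \R$. When $|\beta| = 1$ exactly one off-diagonal of $T$ vanishes while the other equals $2$, so the equation on the nonzero entry forces $\lambda = 0$, contradicting $T \neq 0$. For GNS-undirectedness, since GNS-undirected implies KMS-undirected, I may assume $\beta = 0$ and $\alpha, \gamma \in \R$; but then the off-diagonals of $\rho_q T \rho_q^{-1}$ are $q^{-2}$ and $q^2$, while those of $T$ are both $1$, so $\rho_q T \rho_q^{-1} = \mu T$ would demand $\mu = q^{-2} = q^2$, impossible for $q \in (0,1)$. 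Hence $\mathcal{G}$ is never GNS-undirected. The only real subtlety is handling the boundary case $|\beta| = 1$ cleanly in the KMS step.
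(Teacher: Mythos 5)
Your proposal is correct and follows essentially the same route as the paper's proof: read off $T$ from the $q$-adjusted Pauli space, compute the KMS-norm $c$, form $A = c^{-1}(\rho_q^{-1/4}T\rho_q^{1/4})\otimes\overline{(\rho_q^{-1/4}T\rho_q^{1/4})}$, get the spectrum from the eigenvalues $s_\pm$ of the $2\times 2$ generator, and check the four properties by the same entrywise equations $T^\ast=\lambda T$ and $\rho_q T\rho_q^{-1}=\mu T$. Your explicit treatment of the boundary case $|\beta|=1$ in the KMS step is a small point the paper handles only implicitly (via the conclusion $\beta_+=\beta_-$), but it does not constitute a different argument.
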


\begin{proof}
    Let us omit the indices for the sake of brevity.
    The space $S$ is obtained directly from the Pauli space $V$. Let
    \begin{align*}
        T := \begin{pmatrix}
            \alpha + q^{-2}\gamma & 1 + \beta \\
            1-\beta & \alpha - \gamma
        \end{pmatrix}
        = \begin{pmatrix}
            \alpha_+ & \beta_+ \\
            \beta_- & \alpha_-
        \end{pmatrix},
    \end{align*}
    and observe $S = \spann \{T\}$.
    %Recall the matrix $\rho_q = (1+q^2) \mathrm{diag}(q^{-2}, 1)$ from Proposition \ref{pre::prop:qsets_on_M2}.
    The norm of $T$ with respect to the KMS-inner product induced by $\psi_q^{-1}$ is given by 
    \begin{align*}
        \left(\|T\|_{\psi_q^{-1}}^{KMS}\right)^2 
        =\frac{1}{1+q^2} \left( q^2 |\alpha_+|^2 + 2q (1+\beta^2) + |\alpha_-|^2 \right)
        = c
    \end{align*}
    where the calculation is similar to the preceding lemma.
    \comments{
    \begin{align*}
        \left(\|T\|_{\psi_q^{-1}}^{KMS}\right)^2 &= \langle T, T \rangle_{\psi_q^{-1}}^{KMS} \\
            &= \mathrm{Tr}\left( T^\ast \rho_q^{-\frac{1}{2}} T \rho_q^{-\frac{1}{2}} \right) \\
            &= \frac{1}{1+q^2} \mathrm{Tr}\left( T^\ast \begin{pmatrix}
                q & 0 \\
                0 & 1
            \end{pmatrix}
            T
            \begin{pmatrix}
                q & 0 \\
                0 & 1
            \end{pmatrix} \right) \\
            &= \frac{1}{1+q^2} \mathrm{Tr} \left( \begin{pmatrix}
                 q \overline{\alpha_+} & \beta_- \\
                 q \beta_+ & \overline{\alpha_-}
            \end{pmatrix}
            \begin{pmatrix}
                q \alpha_+ & \beta_+ \\
                q \beta_- & \alpha_-
            \end{pmatrix} \right) \\
            &= \frac{1}{1+q^2} \mathrm{Tr} \left( \begin{pmatrix}
                q^2|\alpha_+|^2 + q \beta_-^2 & \ast \\
                \ast & q \beta_+^2 + |\alpha_-|^2
            \end{pmatrix}
            \right) \\
            &= \frac{1}{1+q^2} \left( q^2 |\alpha_+|^2 + 2q (1+\beta^2) + |\alpha_-|^2 \right) \\
            &\vcentcolon= c^{-1}.
    \end{align*}
    }
    Now, $\left\{ c^{-1/2} T \right\}$ is an orthonormal basis of $S$ with respect to the KMS-inner product induced by $\psi_q^{-1}$. Therefore, Proposition \ref{pre::prop:S->A} and Remark \ref{pre::rem:adjacency_matrix_A_as_Kronecker_product} yield
    \begin{align*}
        A&= c \left( \rho_q^{-\frac{1}{4}} T \rho_q^{\frac{1}{4}} \right) \otimes \overline{\left( \rho_q^{-\frac{1}{4}} T \rho_q^{\frac{1}{4}} \right)} 
            = c \left( 
                \begin{pmatrix}
                    \alpha_+ & q^{\frac{1}{2}} \beta_+ \\
                    q^{-\frac{1}{2}} \beta_- & \alpha_- 
                \end{pmatrix} 
                \otimes
                \begin{pmatrix}
                    \overline{\alpha_+} & q^{\frac{1}{2}} \beta_+ \\
                    q^{-\frac{1}{2}} \beta_- & \overline{\alpha_-} 
                \end{pmatrix}\right),
    \end{align*}
    and the statement follows by applying the Kronecker product. 
    The spectrum of 
    \begin{align*}
        \begin{pmatrix}
            \alpha_+ & q^{\frac{1}{2}} \beta_+ \\
                    q^{-\frac{1}{2}} \beta_- & \alpha_- 
        \end{pmatrix}
    \end{align*}
    consists of the two numbers
    \begin{align*}
        s_{\pm} := 
            \alpha + \frac{q^{-2}-1}{2} \gamma \pm \sqrt{\left(\frac{q^{-2}+1}{2}\right)^2\gamma^2 + 1-\beta^2}
    \end{align*}
    and the statement about the spectrum of $A_{\alpha, \beta, \gamma}$ follows immediately.
    
    Finally, recall from Definition \ref{pre::def:GNS-KMS-undirected_quantum_graphs} that $\mathcal{G}$ is GNS-undirected if and only if $S = S^*$ and $S = \rho_q S \rho_q^{-1}$. 
    Since $S = \spann\{T\}$, this condition is equivalent to 
    \begin{align*}
        \begin{pmatrix}
            \overline{\alpha_+} & \beta_- \\
            \beta_+  & \overline{\alpha_-}
        \end{pmatrix}
        = \lambda \begin{pmatrix}
            \alpha_+ & \beta_+ \\
            \beta_-  & \alpha_-
        \end{pmatrix}
        \text{ and }
        \begin{pmatrix}
            \alpha_+ & q^{-2} \beta_+ \\
            q^{2} \beta_-  & \alpha_-
        \end{pmatrix}
        = \mu \begin{pmatrix}
            \alpha_+ & \beta_+ \\
            \beta_-  & \alpha_-
        \end{pmatrix}
    \end{align*}
    for some $\lambda, \mu \in \C$. As $\beta_+, \beta_- \in [0,2]$ the first equation entails $\lambda\in [0, \infty)$, and thus the equation holds if and only if $\beta_+ = \beta_-$ and $\alpha_+, \alpha_- \in \R$ which is equivalent to $\alpha, \gamma \in \R$ and $\beta = 0$. Then, however, the second equality asks $q^2 = q^2 \beta_- = \mu \beta_- = \mu$ as well as $q^{-2} = \mu$. This is impossible for $q \in (0,1)$. Hence, $\mathcal{G}$ is not GNS-undirected for any admissible $\alpha, \beta, \gamma$ and $q$.
    However, the quantum graph is KMS-undirected if and only if $S = S^*$ which holds if and only if $\beta = 0$ and $\alpha, \gamma \in \R$. 
    Further, $\beta_+=\beta_-=0$ is not possible for any $\beta\in[-1,1]$ and thus we have $\mathrm{I}_2\notin S$. Hence, $\mathcal{G}$ is never reflexive.
    Lastly, $\mathcal{G}$ is loopfree if and only if $\langle T, \mathrm{I}_2 \rangle_{\psi_q^{-1}}^{KMS} = 0$. We choose the $q$-adjusted Pauli space so that this is equivalent to $\alpha = 0$.
\end{proof}

\begin{remark}
\label{ntg::rmk:KMS-undirected_qgraphs_with_one_edge_on_(M2,psi_q)}
    The quantum graph $\mathcal{G}_{\alpha, \beta, \gamma}^{(q,1C)}$ is loopfree and KMS-undirected if and only if $\beta = 0$, $\alpha = 0$ and $\gamma \in \R$. In fact, in this case it is $\gamma \geq 0$ since $(\alpha, \beta, \gamma) \in J^{(q, 1C)}$. Thus, the loopfree KMS-undirected quantum graphs on $(M_2, \psi_q)$ are actually indexed by a single non-negative number $\gamma \geq 0$ and given by the quantum adjacency matrix
    \begin{align*}
        A_{0,0,\gamma}^{(q,1)} &= c \begin{pmatrix}
            |\alpha_+|^2     & q^{\frac{1}{2}} \beta_+ \alpha_+            & q^{\frac{1}{2}}\beta_+ \overline{\alpha_+}              & q \beta_+^2            \\
            q^{-\frac{1}{2}} \beta_- \alpha_+ & \alpha_+ \overline{\alpha_-} & \beta_+ \beta_-                           & q^{\frac{1}{2}}\beta_+ \overline{\alpha_-}   \\
            q^{-\frac{1}{2}}\beta_- \overline{\alpha_+} & \beta_+ \beta_-                          & \alpha_- \overline{\alpha_+}   & q^{\frac{1}{2}}\beta_+ \alpha_-   \\
            q^{-1}\beta_-^2         & q^{-\frac{1}{2}}\beta_- \overline{\alpha_-}            & q^{-\frac{1}{2}} \beta_- \alpha_-              & |\alpha_-|^2
        \end{pmatrix} \\
        &= \frac{1+q^2}{(q^{-2}+1)\gamma^2+2q} \begin{pmatrix}
            q^{-4} \gamma^2 & q^{-\frac{3}{2}} \gamma & q^{-\frac{3}{2}} \gamma & q \\
            q^{-\frac{5}{2}} \gamma & - q^{-2} \gamma^2 & 1 & -q^{\frac{1}{2}} \gamma \\
            q^{-\frac{5}{2}} \gamma & 1 & - q^{-2} \gamma^2 & -q^{\frac{1}{2}} \gamma \\
            q^{-1} & -q^{\frac{1}{2}} \gamma & -q^{\frac{1}{2}} \gamma & \gamma^2
        \end{pmatrix}.
    \end{align*}
\end{remark}

\begin{remark}
    Let us discuss how the nontracial quantum graphs with one quantum edge fits into Matsuda's classification of GNS-undirected quantum graphs on $(M_2, \omega_q)$, where $\omega_q = \frac{1}{(q+q^{-1})^2} \psi_q$. In \cite[Theorem 3.5(2)]{matsuda_classification_2022} the author proves that there exists exactly one loopfree quantum graph with one quantum edge on $(M_2, \omega_q)$, and this quantum graph has the quantum adjacency matrix
    \begin{align*}
        A = \begin{pmatrix}
            q^{-2} & 0 & 0 & 0 \\
            0 & -1 & 0 & 0 \\
            0 & 0 & -1 & 0 \\
            0 & 0 & 0 & q^{2}
        \end{pmatrix}.
    \end{align*}
    Note that the original result is on reflexive (instead of loopfree) quantum graphs and that Matsuda uses another basis of $M_2$, but this can be easily adapted. Combining our previous observations, up to isomorphism the only GNS-undirected and loopfree quantum graph on $(M_2, \psi_q)$ is $\mathcal{G}_{\alpha}^{(q,1B)}$ with $\alpha = 0$. Indeed, an inspection of Lemma \ref{ntg::lemma:Pauli_spaces_for_qgraphs_with_one_edge_on_M2,psi_q} shows that there are no distinct quantum graphs on $(M_2, \psi_q)$ which are isomorphic to this quantum graph $\mathcal{G}_{0}^{(q,1B)}$. A detailed computation of the adjacency matrix $A_0^{(q,1B)}$ from Proposition \ref{ntg::prop:qgraph_G_alpha_on_(M2,psi_q)_with_one_edge} yields
    \begin{align*}
        A_0^{(q,1B)} = 
        \begin{pmatrix}
            q^{-2} & 0 & 0 & 0 \\
            0 & -1 & 0 & 0 \\
            0 & 0 & -1 & 0 \\
            0 & 0 & 0 & q^{2}
        \end{pmatrix}.
    \end{align*}
    This is exactly as expected. Indeed, one can verify that $A: M_2 \to M_2$ is a quantum adjacency matrix on $(M_2, \psi_q)$ in our framework exactly if it is a quantum adjacency matrix on $(M_2, \omega_q)$ in Matsuda's framework.
\end{remark}

\section{Non-Loopfree Nontracial Quantum Graphs with Two Quantum Edges}
\label{sec::n2e}
%----------------------------------------------------------------------------------
%----------------------------------------------------------------------------------

To complete the classificaton of nontracial quantum graphs it remains to investigate the nontracial non-loopfree quantum graphs with two quantum edges. By Remark \ref{iso::remark:complete_classification}, together with Theorem \ref{ntg::thm:classification_nontracial_qgraphs_with_one_edge} this yields the complete classification of nontracial quantum graphs on $M_2$.

\begin{lemma}
\label{ntg::lemma:2edges_plane_classification}
    Let $V \subset \C^4$ be a plane with $V \not \perp e_1$. 
    Then there are unique numbers $\beta \in [-1, 1]$ and $\alpha, \gamma, \delta \in \C$ such that 
    \begin{align*}
        \left[\begin{array}{c|c|c}
            1& 0 & 0 \\
            \hline
            0 & R & 0 \\
            \hline 
            0 & 0 & 1
            \end{array}\right]
        V = 
            \spann \left\{ \begin{pmatrix}
                0 \\ 1 \\ i \beta \\ \delta
            \end{pmatrix}, \begin{pmatrix}
                1 \\ i \alpha \beta - \gamma \overline{\delta} \\ \alpha \\ \gamma
            \end{pmatrix} \right\}
            =\vcentcolon V^{(q,2)}_{\alpha,\beta,\gamma,\delta}
    \end{align*}
    holds for some $R \in \SO(2)$ and
    \begin{align}
        \label{8::eq:J^(q,2)}\tag{$\ast$}
        \begin{aligned}
            \beta = \pm 1, \delta \neq 0 &\implies \delta \in (0, \infty), \\
            \beta = \pm 1, \delta = 0 &\implies \alpha \in [0, \infty), \\
            \beta \in (-1,1), \delta \neq 0 &\implies \mathrm{arg}(\delta) \in [0, \pi), \\
            \beta \in (-1,1), \delta = 0 &\implies \mathrm{arg}(\alpha) \in [0, \pi).
        \end{aligned}
    \end{align}
    Denote the index set 
    $J^{(q,2)}\vcentcolon=
        \{(\alpha,\beta,\gamma,\delta)\vcentcolon\text{\eqref{8::eq:J^(q,2)} is fulfilled}\}$.
\end{lemma}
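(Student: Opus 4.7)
I would mirror the proof of Lemma~\ref{t2e::lemma:classification_of_planes_V}, the only real change being that the symmetry group is now $\SO(2)$ acting on the middle two components only. My approach is to construct a canonical basis $(v, w)$ of $V$ in two steps — first fixing $v$ inside $V \cap e_1^\perp$ via Lemma~\ref{lin::lemma:lines_in_C2_via_Möbius_trafo}(1), then pinning down $w$ by orthogonality to $v$ — and then deduce uniqueness of $(\alpha, \beta, \gamma, \delta)$ from Lemma~\ref{lin::lemma:uniqueness_of_lambda_in_C2}.

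\textbf{Existence.} Since $V \not\perp e_1$, the first coordinate does not vanish identically on $V$, so the intersection $V \cap e_1^\perp$ is one-dimensional. I pick a nonzero $v = (0, v_2, v_3, v_4)^t$ in this intersection. Lemma~\ref{lin::lemma:lines_in_C2_via_Möbius_trafo}(1) then provides $R \in \SO(2)$, $\lambda \in \C$ and a unique $\beta \in [-1,1]$ with $\lambda R (v_2, v_3)^t = (1, i\beta)^t$; applying the block rotation $\mathrm{diag}(1, R, 1)$ and rescaling $v$ puts it in the form $(0, 1, i\beta, \delta)^t$ for some $\delta \in \C$. For the second basis vector I would take any $w \in V$ with $w_1 = 1$ and then subtract a multiple of $v$ so as to enforce $\langle v, w \rangle = 0$ in the standard inner product on $\C^4$. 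Expanding this orthogonality gives exactly $w_2 = i\beta w_3 - \overline{\delta} w_4$, so setting $\alpha := w_3$ and $\gamma := w_4$ produces the desired second vector $(1, i\alpha\beta - \gamma\overline{\delta}, \alpha, \gamma)^t$.

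\textbf{Uniqueness.} Suppose $V$ admits two canonical representations related by a block rotation $R'$. Since $V \cap e_1^\perp$ is one-dimensional, the two candidate first vectors satisfy $v = \lambda' R' v'$ for some $\lambda' \in \C$. Applying Lemma~\ref{lin::lemma:uniqueness_of_lambda_in_C2} to their middle components forces $\beta = \beta'$ and constrains $(\lambda', R')$ to one of two shapes: for $\beta = \pm 1$, $\lambda' \in S^1$ is arbitrary with $R'$ as in part~(a) of that lemma; for $\beta \in (-1,1)$, $\lambda' = \pm 1$ with $R' = \lambda' \mathrm{I}_2$. The fourth-coordinate equation then reads $\delta = \lambda' \delta'$. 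For the second vectors, I would argue that because $R'$ preserves both the first coordinate and the standard inner product, $R' w'$ automatically has first component $1$ and is orthogonal to $v$, whence $w = R' w'$ without any scaling. This yields $\gamma = \gamma'$ immediately, while the middle-component equation produces the analogous relation between $\alpha$ and $\alpha'$. The conditions defining $J^{(q,2)}$ are then calibrated to remove the remaining scalar freedom in $\lambda'$ — used to fix the argument of $\delta$ when $\delta \neq 0$, or otherwise the argument of $\alpha$.

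\textbf{Main obstacle.} The chief subtlety will be in case $\beta = \pm 1$ with $\delta \neq 0$, where $R'$ mixes the middle two components of $w$ nontrivially. A direct computation gives $\alpha = \lambda' \alpha' - \beta y \gamma \overline{\delta'}$ (with $\lambda' = x + iy$) rather than the clean $\alpha = \lambda' \alpha'$ that one finds in case (b). I would therefore need to verify that the defining identity $w_2 = i\alpha\beta - \gamma\overline{\delta}$ is automatically preserved under this more intricate transformation, a check that reduces to $\overline{\delta} = \overline{\lambda'}\overline{\delta'}$, i.e.\ the complex conjugate of $\delta = \lambda' \delta'$ already recorded above. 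Once this consistency is confirmed, the remaining case distinctions mirror the tracial argument of Lemma~\ref{t2e::lemma:classification_of_planes_V}.
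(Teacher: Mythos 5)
Your proposal follows the paper's proof essentially step for step: the same two-stage construction of a canonical basis (a distinguished vector spanning $V \cap e_1^\perp$, normalized via the classification of lines in $\C^2$, then a second vector pinned down by orthogonality and the normalization $w_1 = 1$), and the same uniqueness argument via Lemma \ref{lin::lemma:uniqueness_of_lambda_in_C2}. The only divergence is cosmetic: in the case $\beta = \pm 1$, $\delta \neq 0$ you verify by hand that the rotated second vector retains the canonical form (your identity $\alpha = \lambda'\alpha' - \beta y \gamma \overline{\delta'}$ is correct), whereas the paper sidesteps this computation by first using $\delta, \delta' \in (0,\infty)$ together with $\delta = \lambda'\delta'$ to force $\lambda' = 1$, hence $R' = \mathrm{I}_2$ and $\alpha = \alpha'$. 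Both routes work, and the consistency you check is in fact automatic, since the block rotation preserves the first coordinate and the standard inner product.

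There is one step that would genuinely fail, although you share this gap with the paper's own proof: in the existence argument you apply Lemma \ref{lin::lemma:lines_in_C2_via_Möbius_trafo}(1) to the middle two components $(v_2, v_3)$ of the vector $v$ spanning $V \cap e_1^\perp$, but that lemma requires a nonzero vector of $\C^2$, and nothing prevents $v_2 = v_3 = 0$. For instance $V = \spann\{e_1, e_4\}$, the $q$-adjusted Pauli space of the diagonal subalgebra $S = \spann\{\mathrm{I}_2, \sigma_3^{(q)}\}$, is a plane with $V \not\perp e_1$ whose intersection with $e_1^\perp$ is $\spann\{e_4\}$; since the block rotation fixes the fourth coordinate, no choice of $R$, $\beta$, $\delta$ brings this line to the form $\spann\{(0,1,i\beta,\delta)^t\}$, so such planes are not covered by the claimed normal form at all. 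A complete proof must either treat this degenerate family separately or show it cannot occur -- and as the example shows, it can.
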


\begin{proof}
    Up to scalar multiplication there is a unique non-vanishing vector $v \in V$ of the form
    \begin{align*}
        v = \begin{pmatrix}
            0 \\ v_1 \\ v_2 \\ v_3
        \end{pmatrix} \in V.
    \end{align*}
    Indeed, if there were two linearly independent vectors with vanishing first component, they would span the whole space $V$ -- in contradiction to $V \not \perp e_1$. Using Lemma \ref{lin::lemma:existence_of_index_beta_for_lines_in_C3} one finds a unique $\beta \in [-1,1]$ such that
    \begin{align*}
        \begin{pmatrix}
            v_1 \\ v_2
        \end{pmatrix}
        = 
        \lambda R \begin{pmatrix}
                1 \\ i \beta
        \end{pmatrix}
    \end{align*}
    holds for some $\lambda \in \C$ and $R \in \SO(2)$. Consequently, we have with $\delta := \lambda v_3$
    \begin{align*}
        \begin{pmatrix}
                0 \\ 1 \\ i \beta \\ \delta
            \end{pmatrix}
        = 
        \lambda \left[\begin{array}{c|c|c}
            1& 0 & 0 \\
            \hline
            0 & R & 0 \\
            \hline 
            0 & 0 & 1
            \end{array}\right]
            v.
    \end{align*}
    Let $w \in V$ be another vector that is orthogonal to $v$ and such that $V = \spann \{ v, w\}$. Then $w$ has non-vanishing first component since $v$ is not orthogonal to $e_1$ and without loss of generality we may assume that its first component is $w_1 = 1$. Using $v \perp w$ one readily checks that there are numbers $\alpha, \gamma \in \C$ such that
    \begin{align*}
        \begin{pmatrix}
                1 \\ i \alpha \beta - \gamma \overline{\delta} \\ \alpha \\ \gamma
            \end{pmatrix}
        =
        \left[\begin{array}{c|c|c}
            1& 0 & 0 \\
            \hline
            0 & R & 0 \\
            \hline 
            0 & 0 & 1
            \end{array}\right]
            w.
    \end{align*}
    It remains to investigate uniqueness of the numbers $\alpha, \beta, \gamma, \delta \in \C$. 
    For that, assume
    \begin{align*}
        \spann \left\{ \begin{pmatrix}
            0 \\ 1 \\ i \beta \\ \delta
        \end{pmatrix}, \begin{pmatrix}
            1 \\ i \alpha \beta - \gamma \overline{\delta} \\ \alpha \\ \gamma
        \end{pmatrix} \right\}
        = 
        \left[\begin{array}{c|c|c}
            1& 0 & 0 \\
            \hline
            0 & R & 0 \\
            \hline 
            0 & 0 & 1
            \end{array}\right]
            \spann \left\{ \begin{pmatrix}
                0 \\ 1 \\ i \beta^\prime \\ \delta^\prime
            \end{pmatrix}, \begin{pmatrix}
                1 \\ i \alpha^\prime \beta^\prime - \gamma^\prime \overline{\delta^\prime} \\ \alpha^\prime \\ \gamma^\prime
            \end{pmatrix} \right\}
    \end{align*}
    for some $R \in \SO(2)$ and $\beta^\prime \in [-1,1]$, $\alpha^\prime, \gamma^\prime, \delta^\prime \in \C$. Looking at the first components of the spanning vectors one readily checks
    \begin{align}
        \begin{pmatrix}
            0 \\ 1 \\ i \beta \\ \delta
        \end{pmatrix}
        &=
        \lambda' \left[\begin{array}{c|c|c}
            1& 0 & 0 \\
            \hline
            0 & R & 0 \\
            \hline 
            0 & 0 & 1
        \end{array}\right]
        \begin{pmatrix}
            0 \\ 1 \\ i \beta^\prime \\ \delta^\prime
        \end{pmatrix} 
    \end{align}
    for some $\lambda' \in \C$ and
    \begin{align}
    \label{ntg::eq:plane_lemma_case3}
        \begin{pmatrix}
            1 \\ i \alpha \beta - \gamma \overline{\delta} \\ \alpha \\ \gamma
        \end{pmatrix}
        &= 
        \left[\begin{array}{c|c|c}
            1& 0 & 0 \\
            \hline
            0 & R & 0 \\
            \hline 
            0 & 0 & 1
        \end{array}\right]
        \begin{pmatrix}
            1 \\ i \alpha^\prime \beta^\prime - \gamma^\prime \overline{\delta^\prime} \\ \alpha^\prime \\ \gamma^\prime
        \end{pmatrix}.
    \end{align}
    The first equation entails $\delta = \lambda' \delta^\prime$ as well as $\beta = \beta^\prime$ (due to Lemma \ref{lin::lemma:uniqueness_of_lambda_in_C2}), while the second entails $\gamma = \gamma^\prime$. 
    Thus, the numbers $\beta, \gamma$ are unique. 
    By Lemma \ref{lin::lemma:uniqueness_of_lambda_in_C2} the two equations hold if and only if -- additional to $\beta = \beta^\prime$, $\gamma = \gamma^\prime$ and $\delta = \lambda'\delta^\prime$ -- one of the following is true.
    \begin{enumerate}[label=(\alph*)]
        \item $\beta =\pm 1$ and $\displaystyle R = \begin{pmatrix}
            x & -\beta y \\
            \beta y & x
        \end{pmatrix}$
        for $x+ iy = \lambda' \in S^1$. By choosing $\lambda'$ appropriately we can obtain $\delta \in [0, \infty)$. If we had $\delta, \delta^\prime \in (0, \infty)$, then one checks immediately $\lambda' = 1$ and thus, $R=\mathrm{I}_2$ and $\alpha = \alpha^\prime$. Thus, in this case, it  follows that the numbers $\alpha, \beta, \gamma, \delta$ are unique if one asks $\delta \in [0, \infty)$. If, on the other hand, $\delta = 0$, then (\ref{ntg::eq:plane_lemma_case3}) is equivalent to
        \begin{align*}
            \alpha \begin{pmatrix}
                i \beta \\ 1
            \end{pmatrix}
            = \alpha^\prime 
            \begin{pmatrix}
                x & -\beta y \\
                \beta y & x
            \end{pmatrix}
            \begin{pmatrix}
                i \beta^\prime \\ 1
            \end{pmatrix}
        \end{align*}
        A simple calculation shows that this is true if and only if $\alpha = \lambda' \alpha^\prime$. 
        %or $\alpha = \overline{\lambda'} \alpha^\prime$, respectively, depending on whether $\beta = 1$ or $\beta = -1$. 
        Thus, in that case one can choose $\alpha \in [0, \infty)$, and the numbers $\alpha, \beta, \gamma, \delta$ are unique if one asks $\alpha \in [0, \infty)$.
        \item $\beta \in (-1,1)$, $\lambda' = \pm 1$ and $R =\lambda^\prime \mathrm{I}_2$. In this case, the two equations entail $\alpha = \pm\alpha^\prime$ and $\delta = \pm\delta^\prime$.
    \end{enumerate}
    The latter two cases show that the pair $(\alpha, \delta)$ is unique up to a sign if $\beta \in (-1,1)$. Altogether, the statement follows.
\end{proof}

Using the previous lemma we arrive at the following classification of nontracial non-loopfree quantum graphs with two quantum edges. Recall the $q$-adjusted Pauli space from Definition \ref{pre::def:q_adjusted_Pauli_space}.

\begin{thm}
    \label{ntg::thm:classification_of_nontracial_qgraphs_with_two_edges}
    Let $\mathcal{G}_{\alpha, \beta, \gamma, \delta}^{(q,2)}$ be the quantum graph on $(M_2, \psi_q)$ given by the $q$-adjusted Pauli space $V_{\alpha, \beta, \gamma, \delta}^{(q,2)}$ where $q \in (0,1)$ .
    %\begin{align*}
    %    V_{\alpha, \beta, \gamma, \delta} :=
    %        \spann \left\{ \begin{pmatrix}
    %            0 \\ 1 \\ i \beta \\ \delta
    %        \end{pmatrix}, \begin{pmatrix}
    %            1 \\ i \alpha \beta - \gamma \overline{\delta} \\ \alpha \\ \gamma
    %        \end{pmatrix} \right\},
    %\end{align*}
    %where $(\alpha, \beta, \gamma, \delta)\in J^{(q,2)}$.
    Every non-loopfree quantum graph $\mathcal{G}$ on $(M_2, \psi_q)$ with exactly two quantum edges is isomorphic to exactly one of the quantum graphs $\mathcal{G}_{\alpha, \beta, \gamma, \delta}^{(q,2)}$ where  $(\alpha,\beta, \gamma, \delta) \in J^{(q,2)}$.
\end{thm}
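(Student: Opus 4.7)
The plan is to mirror the structure of Theorem \ref{t2e::thm:classification_of_t2e_qgraphs}, which handled the tracial analogue, but with the nontracial ingredients developed in this section. The key observation is that the heavy lifting has already been done: Lemma \ref{ntg::lemma:2edges_plane_classification} gives a canonical form for two-dimensional subspaces of $\C^4$ that are not orthogonal to $e_1$ (up to a rotation acting on the middle two components), and Lemma \ref{pau::lemma:q_adjusted_Pauli_spaces_of_isomorphic_nontracial_qgraphs} translates such rotations into isomorphisms of quantum graphs on $(M_2, \psi_q)$. Combining these two facts should immediately yield the result.

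Concretely, I would proceed as follows. Let $\mathcal{G}$ be an arbitrary non-loopfree quantum graph on $(M_2, \psi_q)$ with exactly two quantum edges, and denote its $q$-adjusted Pauli space by $V \subset \C^4$. Since $\mathcal{G}$ has two quantum edges, $V$ is a plane. By Lemma \ref{pau::lemma:q_adjusted_Pauli_spaces_of_isomorphic_nontracial_qgraphs} the hypothesis that $\mathcal{G}$ is non-loopfree is equivalent to $V \not\perp e_1$. Thus Lemma \ref{ntg::lemma:2edges_plane_classification} applies and produces a unique tuple $(\alpha, \beta, \gamma, \delta) \in J^{(q,2)}$ together with some $R \in \mathrm{SO}(2)$ such that
\begin{align*}
    \left[\begin{array}{c|c|c}
        1 & 0 & 0 \\
        \hline
        0 & R & 0 \\
        \hline
        0 & 0 & 1
    \end{array}\right] V \;=\; V_{\alpha, \beta, \gamma, \delta}^{(q,2)}.
\end{align*}

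Applying Lemma \ref{pau::lemma:q_adjusted_Pauli_spaces_of_isomorphic_nontracial_qgraphs} once more, this identity is precisely the statement that $\mathcal{G}$ is isomorphic to $\mathcal{G}_{\alpha, \beta, \gamma, \delta}^{(q,2)}$. Conversely, if $\mathcal{G}$ were isomorphic to $\mathcal{G}_{\alpha', \beta', \gamma', \delta'}^{(q,2)}$ for another tuple in $J^{(q,2)}$, then by the same lemma the two $q$-adjusted Pauli spaces $V_{\alpha, \beta, \gamma, \delta}^{(q,2)}$ and $V_{\alpha', \beta', \gamma', \delta'}^{(q,2)}$ would be related by such a block-rotation, and the uniqueness part of Lemma \ref{ntg::lemma:2edges_plane_classification} would force $(\alpha, \beta, \gamma, \delta) = (\alpha', \beta', \gamma', \delta')$.

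Since both the classification of planes in Lemma \ref{ntg::lemma:2edges_plane_classification} and the isomorphism criterion in Lemma \ref{pau::lemma:q_adjusted_Pauli_spaces_of_isomorphic_nontracial_qgraphs} have already been established, no further calculation is required. The only potential obstacle would have been matching the set of admissible parameters in the definition of $J^{(q,2)}$ against the orbit structure of the $\mathrm{SO}(2)$-action, but this matching is exactly what Lemma \ref{ntg::lemma:2edges_plane_classification} was designed to give. Thus the proof amounts to a direct citation of these two lemmas.
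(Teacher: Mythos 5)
Your proposal is correct and follows exactly the same route as the paper's proof: reduce to the $q$-adjusted Pauli space, invoke Lemma \ref{pau::lemma:q_adjusted_Pauli_spaces_of_isomorphic_nontracial_qgraphs} to identify non-loopfreeness with $V \not\perp e_1$ and isomorphism with the block-rotation relation, then apply Lemma \ref{ntg::lemma:2edges_plane_classification}. In fact you spell out the uniqueness ("exactly one") argument slightly more explicitly than the paper does.
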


\begin{proof}
    Let $\mathcal{G}$ be a quantum graph on $(M_2, \psi_q)$ with $q$-adjusted Pauli space $V^{(q)} \subset \C^4$. As $\mathcal{G}$ is non-loopfree we have $V \not \perp e_1$ by Lemma \ref{pau::lemma:q_adjusted_Pauli_spaces_of_isomorphic_nontracial_qgraphs}. Thus, Lemma \ref{ntg::lemma:2edges_plane_classification} yields that $\mathcal{G}$ is isomorphic to one of the quantum graphs $\mathcal{G}_{\alpha, \beta, \gamma, \delta}^{(q,2)}$.
\end{proof}

Finally, let us investigate the quantum graphs $\mathcal{G}_{\alpha, \beta, \gamma, \delta}^{(q,2)}$.

\begin{proposition}
\label{ntg::prop:qgraph_G_alpha,beta,gamma,delta_with_two_edges}
    The quantum graph $\mathcal{G}_{\alpha, \beta, \gamma, \delta}^{(q,2)}$ has quantum edge space
    \begin{align*}
        S_{\alpha, \beta, \gamma, \delta}^{(q,2)} = \spann \left\{
            \begin{pmatrix}
                q^{-2} \delta & 1+\beta \\
                1-\beta & - \delta
            \end{pmatrix},
            \begin{pmatrix}
                1+q^{-2} \gamma    & i\alpha(\beta-1)-\gamma\overline{\delta} \\
                i\alpha(\beta+1)-\gamma\overline{\delta} & 1-\gamma
            \end{pmatrix}
        \right\}.
    \end{align*}
    It is
    \begin{itemize}
        \item not GNS-undirected,
        \item KMS-undirected if and only if $\beta = 0$ and $\alpha, \gamma, \delta \in \R$,
        \item reflexive if and only if $\alpha = \gamma = 0$,
        \item never loopfree. 
    \end{itemize}
\end{proposition}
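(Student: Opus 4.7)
The proof follows the pattern of Proposition \ref{t2e::prop:properties_of_G_alpha,beta,delta^(2)} and Proposition \ref{ntg::prop:qgraph_G_alpha,beta,gamma_with_one_edge}. First I would obtain the quantum edge space directly from the $q$-adjusted Pauli space: expanding the two spanning vectors of $V_{\alpha,\beta,\gamma,\delta}^{(q,2)}$ in the basis $(\sigma_0, \sigma_1, \sigma_2, \sigma_3^{(q)})$ yields the two generators
\begin{align*}
    T_1 = \begin{pmatrix} q^{-2}\delta & 1+\beta \\ 1-\beta & -\delta \end{pmatrix},
    \qquad
    T_2 = \begin{pmatrix} 1+q^{-2}\gamma & i\alpha(\beta-1)-\gamma\overline{\delta} \\ i\alpha(\beta+1)-\gamma\overline{\delta} & 1-\gamma \end{pmatrix},
\end{align*}
and $S = \spann\{T_1, T_2\}$ follows.

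For KMS-undirectedness I would use the criterion $S = S^\ast$ from Definition \ref{pre::def:GNS-KMS-undirected_quantum_graphs}. Writing $T_i^\ast = \mu_i T_1 + \nu_i T_2$ for $i=1,2$ gives two linear systems obtained by comparing the four matrix entries. Adding the off-diagonal equations in the first system yields $\mu_1 + \nu_1(i\alpha\beta - \gamma\overline{\delta}) = 1$, and subtracting them yields $-\beta\mu_1 + i\alpha\nu_1 = \beta$. A similar pair of equations arises from $T_2^\ast \in S$. A careful case distinction on $\beta$ (whether $\beta=0$ or not) and on the vanishing of $\alpha$, together with the diagonal equations linking $\delta$ and $\overline{\delta}$ (resp.\ $\gamma$ and $\overline{\gamma}$), shows that the system is simultaneously solvable exactly when $\beta=0$ and $\alpha,\gamma,\delta\in\R$; in that case $T_1^\ast = T_1$ and $T_2^\ast = T_2$ by direct inspection.

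For GNS-undirectedness the criterion is $S = S^\ast$ \emph{and} $\rho_q S\rho_q^{-1} = S$. Assuming the KMS-constraints (so that $\beta=0$ and $\alpha,\gamma,\delta\in\R$), a direct computation gives
\begin{align*}
    \rho_q T_1 \rho_q^{-1} = \begin{pmatrix} q^{-2}\delta & q^{-2} \\ q^{2} & -\delta \end{pmatrix}.
\end{align*}
Writing $\rho_q T_1 \rho_q^{-1} = \mu T_1 + \nu T_2$ and subtracting the two off-diagonal equations yields $2i\alpha\nu = q^{2}-q^{-2}\ne 0$, which forces $\alpha\neq 0$ and $\nu\neq 0$. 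Comparing the diagonal equations then gives $q^{-2}(1-\gamma) = -(1+q^{-2}\gamma)$, i.e.\ $q^{-2} = -1$, a contradiction; and $\alpha=0$ already contradicts the off-diagonal identity. Hence the graph is never GNS-undirected.

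Reflexivity is cleanest in the Pauli picture: $\mathrm{I}_2 \in S$ is equivalent to $e_1 \in V_{\alpha,\beta,\gamma,\delta}^{(q,2)}$. Writing $e_1 = \mu_1 v_1 + \mu_2 v_2$ for the two spanning vectors, the first component forces $\mu_2=1$, and the remaining components reduce to the system $\alpha(1+\beta^2) + i\beta\gamma\overline{\delta} = 0$ and $\gamma(1+|\delta|^2) = i\alpha\beta\delta$. Eliminating $\gamma$ gives $\alpha\bigl(1+\beta^2+|\delta|^2\bigr)=0$, so $\alpha = 0$ and consequently $\gamma = 0$. The converse is trivial since $T_2 = \mathrm{I}_2$ when $\alpha=\gamma=0$. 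Finally, $\mathcal{G}$ is never loopfree because by construction $V_{\alpha,\beta,\gamma,\delta}^{(q,2)}\not\perp e_1$ (the second generator has first component $1$), which by Lemma \ref{pau::lemma:q_adjusted_Pauli_spaces_of_isomorphic_nontracial_qgraphs} is equivalent to non-loopfreeness. I expect the main obstacle to be the careful bookkeeping in the KMS-undirectedness step, where the interplay between $\beta$, the phases of $\alpha,\gamma,\delta$, and the off-diagonal constraints forces several subcases before the conclusion $\beta=0$, $\alpha,\gamma,\delta\in\R$ can be extracted cleanly.
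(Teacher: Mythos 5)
Your proposal is correct and follows essentially the same route as the paper's proof: read off $S$ from the $q$-adjusted Pauli space, test $S=S^\ast$ (and, for GNS-undirectedness, additionally $\rho_q S\rho_q^{-1}=S$) by expanding $T_1^\ast$, $T_2^\ast$ and $\rho_q T_1\rho_q^{-1}$ in the basis $\{T_1,T_2\}$, and test reflexivity via $e_1\in V^{(q,2)}_{\alpha,\beta,\gamma,\delta}$. The only differences are cosmetic: the paper performs the $S=S^\ast$ comparison in the $q$-adjusted Pauli coordinates, where the vanishing first component of the coordinate vector of $T_1^\ast$ immediately forces $\nu_1=0$, $\mu_1=1$, $\beta=0$ and $\delta\in\R$, and it settles reflexivity in one line by using that the two spanning vectors of $V^{(q,2)}_{\alpha,\beta,\gamma,\delta}$ are orthogonal in $\C^4$, whereas your elimination computation reaches the same conclusion by hand.
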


\begin{proof}
    To simplify notation, we omit the indices in the proof.
    The first statement follows directly from the definition of the $q$-adjusted Pauli space of $\mathcal{G}$. Let
    \begin{align*}
        T_1 := \begin{pmatrix}
            q^{-2} \delta & 1+\beta \\
            1-\beta & - \delta
        \end{pmatrix}
        \quad \text{ and } \quad 
        T_2 := \begin{pmatrix}
            1+q^{-2} \gamma    & i\alpha(\beta-1)-\gamma\overline{\delta} \\
            i\alpha(\beta+1)-\gamma\overline{\delta} & 1-\gamma
        \end{pmatrix}.
    \end{align*}
    Recall that $\mathcal{G}$ is KMS-undirected if and only if $S^\ast = S$. This is equivalent to $T_1^\ast, T_2^\ast \in S$ which entails for one $T_1^\ast = \mu_1 T_1 + \nu_1 T_2$, respectively in the $q$-adjusted Pauli space
    \begin{align*}
        \begin{pmatrix}
            0 \\ 1 \\ -i \beta \\ \overline{\delta}
        \end{pmatrix}
        = \mu_1 \begin{pmatrix}
            0 \\ 1 \\ i \beta \\ \delta
        \end{pmatrix}
        + \nu_1 \begin{pmatrix}
            1 \\ i \alpha \beta - \gamma \overline{\delta} \\ \alpha \\ \gamma
        \end{pmatrix}
    \end{align*}
    for some $\mu_1, \nu_1 \in \C$. This holds if and only if $\mu_1=1$, $\nu_1=0$ and $\beta = 0$, $\delta \in \R$. Using this in the analogous equation for $T_2^\ast \in S$ one obtains the requirement
    $T_2^\ast = \mu_2 T_1 + \nu_2 T_2$
    which is equivalent to
    \begin{align*}
        \begin{pmatrix}
            1 \\ -\overline{\gamma}\delta \\ \overline{\alpha} \\ \overline{\gamma}
        \end{pmatrix}
        = \mu_2 \begin{pmatrix}
            0 \\ 1 \\ 0 \\ \delta
        \end{pmatrix}
        + \nu_2 \begin{pmatrix}
            1 \\ - \gamma \overline{\delta} \\ \alpha \\ \gamma
        \end{pmatrix}
    \end{align*}
    for some $\mu_2, \nu_2 \in \C$. This holds if and only if
    \begin{align*}
        \nu_2 = 1,\quad \alpha = \overline{\alpha},\quad
        -\overline{\gamma}\delta &= \mu_2 - \gamma \overline{\delta} 
        \quad\text{and}\quad
        \overline{\gamma} = \mu_2 \delta + \gamma.
    \end{align*}
    The latter two equations are equivalent to
    \begin{align*}
        2i\mathrm{Im}(\gamma \overline{\delta}) = \gamma \overline{\delta}  - \overline{\gamma}\delta = \mu_2 
        \quad\text{and}\quad
        - 2i\mathrm{Im}(\gamma) = \overline{\gamma} - \gamma = \mu_2 \delta.
    \end{align*}
    If $\delta = 0$, then this implies $\gamma \in \R$. Otherwise, using $\delta \in \R$ from above one has 
    \begin{align*}
        - \mu_2 \delta = 2i\mathrm{Im}(\gamma) = \frac{2i \mathrm{Im}(\gamma \overline{\delta})}{\delta} = \frac{\mu_2}{\delta}
    \end{align*}
    which implies $\mu_2 =0$ or $-\delta = \frac{1}{\delta}$. The latter is impossible for $\delta \in \R$, and thus we obtain $\mu_2 = 0$ and $\gamma\in\R$. Altogether, it follows that $S = S^\ast$ is equivalent to $\beta = 0$ and $\alpha, \gamma, \delta \in \R$.
    The quantum graph $\mathcal{G}$ is GNS-undirected if additionally $\rho_q S \rho_q^{-1} = S$. However, one checks
    \begin{align*}
        \rho_q T_1 \rho_q^{-1}
        = \rho_q \begin{pmatrix}
            q^{-2} \delta & 1 \\
            1 & - \delta
        \end{pmatrix}
        \rho_q^{-1}
        = \begin{pmatrix}
            q^{-2} \delta & q^{-2} \\
            q^2 & - \delta
        \end{pmatrix}
        \not \in S.
    \end{align*}
    Indeed, all matrices in $S$ must have coinciding numbers in the upper right and lower left matrix entry since that is true for both $T_1$ and $T_2$. However, as $q \in (0,1)$ this is not true for $\rho_q T_1 \rho_q^{-1}$. Thus, the quantum graph $\mathcal{G}$ is not GNS-undirected.

    Next, recall that $\mathcal{G}$ is reflexive if $\mathrm{I}_2 \in S$. This is the case if and only if there are $\mu, \nu \in \C$ with $\mathrm{I}_2 = \mu T_1 + \nu T_2$ which is equivalent to
    \begin{align*}
        \begin{pmatrix}
            1 \\ 0 \\ 0 \\ 0
        \end{pmatrix}
        = \mu \begin{pmatrix}
            0 \\ 1 \\ i \beta \\ \delta
        \end{pmatrix}
        + \nu \begin{pmatrix}
            1 \\ i \alpha \beta - \gamma \overline{\delta} \\ \alpha \\ \gamma
        \end{pmatrix},
    \end{align*}
    where we switch to the $q$-adjusted Pauli basis of $M_2$ in the latter line. Evidently, the equation is equivalent to $\mu = 0$, $\nu = 1$ and $\alpha = \gamma = 0$ for the two featuring vectors are orthogonal with respect to the usual scalar product on $\C^4$.
\end{proof}

\section{Complete Classification}
\label{sec::cc}
%----------------------------------------------------------------------------------
%----------------------------------------------------------------------------------

In this section we discuss how the previous results combine into a complete classification of directed quantum graphs on $M_2$. This classification is compactly summarized in Table \ref{tabular:tracial_QG} and Table \ref{tabular:non-tracial_QG}. There, we denote $\beta_+=1+\beta$ and $\beta_-\vcentcolon=1-\beta$. 

Evidently, a quantum graph $\mathcal{G}$ on a quantum set $(M_2,\psi)$ has zero to four quantum edges, see Definition \ref{def:quantumGraph}. In what follows, we discuss quantum graphs with zero, one, two, three and four quantum edges separately.

If $\mathcal{G}$ has exactly zero quantum edges then it must be the empty quantum graph from Example \ref{pre::example:empty_trivial_complete_qgraph}. On the other hand, if $\mathcal{G}$ has exactly four quantum edges then it must be the complete quantum graph presented in the same example.

Quantum graphs with exactly one quantum edge have been classified in Section \ref{sec::t1e} (on the tracial quantum set) and in Section \ref{sec::n1e} (on non-tracial quantum sets). Indeed, any (tracial) quantum graph on $(M_2, \tau)$ with exactly one quantum edge is isomorphic to exactly one of the quantum graphs $\mathcal{G}^{(1A)}$ from Proposition \ref{pre::example:empty_trivial_complete_qgraph} and $\mathcal{G}_{\alpha,\beta}^{(1B)}$ from Example \ref{pre::example:empty_trivial_complete_qgraph} and Proposition \ref{t1e::prop:properties_of_G_alpha,beta^(1B)}, respectively. 
On the other hand, (non-tracial) quantum graphs on some $(M_2, \psi_q)$ with $q \in (0,1)$ with exactly one quantum edge are isomorphic to exactly one of the quantum graphs $\mathcal{G}^{(q,1A)}$, $\mathcal{G}_{\alpha}^{(q,1B)}$ and $\mathcal{G}_{\alpha,\beta,\gamma}^{(q,1C)}$ from Example \ref{pre::example:empty_trivial_complete_qgraph}, Proposition \ref{ntg::prop:qgraph_G_alpha_on_(M2,psi_q)_with_one_edge} and Proposition \ref{ntg::prop:qgraph_G_alpha,beta,gamma_with_one_edge}, respectively.

%In Section \ref{sec::t1e} and Section \ref{sec::n1e} we classify tracial and non-tracial quantum graphs with one quantum edge up to isomorphism. By Theorem \ref{t1e::thm:classification_of_1te_qgraphs} and Theorem \ref{ntg::thm:classification_nontracial_qgraphs_with_one_edge} this is indeed a full classification of quantum graphs with one quantum edge. 

Next, assume that $\mathcal{G}$ is a quantum graph with exactly two quantum edges. If $\mathcal{G}$ is loopfree then $\mathcal{G}$ is the loopfree complement of a loopfree quantum graph with exactly one quantum edge in the sense of Lemma \ref{pre::lemma:1-1-correspondence_between_qgraphs_with_k_and_n2-k(-1)_edges}. 
%Thus, if $\mathcal{G}$ is additionally tracial then it must be of the form ..., see Proposition ... Otherwise, if $\mathcal{G}$ is a non-tracial loopfree quantum graph with exactly two quantum edges then it must be of the form ..., see Proposition ...
%To complete the case of two quantum edges, 
So, let us assume that $\mathcal{G}$ is non-loopfree. 
We discussed the tracial case in Section \ref{sec::t2e} where we obtained that $\mathcal{G}$ is isomorphic to exactly one of the quantum graphs $\mathcal{G}_{\beta,\gamma,\delta}^{(2)}$ from Proposition \ref{t2e::prop:properties_of_G_alpha,beta,delta^(2)}. If $\mathcal{G}$ is non-tracial then it is isomorphic to exactly one of the quantum graphs $\mathcal{G}_{\alpha,\beta,\gamma,\delta}^{(q,2)}$ from Proposition \ref{ntg::prop:qgraph_G_alpha,beta,gamma,delta_with_two_edges}.

%The classification of quantum graphs with two quantum edges are separated in two parts: 
%First, all loopfree quantum graphs with two quantum edges are  of the form $\mathcal{G}^\prime$ in the sense of Lemma \ref{pre::lemma:1-1-correspondence_between_qgraphs_with_k_and_n2-k(-1)_edges} where $\mathcal{G}$ is a loopfree quantum graph with one quantum edge.
%This case is also discussed in Remark \ref{iso::remark:complete_classification}.
%Second, the non-loopfree quantum graphs with two quantum edges, discussed in Section \ref{sec::t2e} and Section \ref{sec::n2e}. Theorem \ref{t2e::prop:properties_of_G_alpha,beta,delta^(2) and Theorem \ref{ntg::thm:classification_of_nontracial_qgraphs_with_two_edges} tell us that the non-loopfree quantum graphs with two quantum edges are complete classified.

Finally, the quantum graphs with exactly three quantum edges are obtained as complements of the quantum graphs $\mathcal{G}^{(1A)}$, $\mathcal{G}_{\alpha,\beta}^{(1B)}$. $\mathcal{G}^{(q,1A)}$, $\mathcal{G}_{\alpha}^{(q,1B)}$ and $\mathcal{G}_{\alpha,\beta,\gamma}^{(q,1C)}$ with exactly one quantum edge by Lemma \ref{pre::lemma:1-1-correspondence_between_qgraphs_with_k_and_n2-k(-1)_edges}.

\comments{
The complete classification is summarized in the Table \ref{tabular:tracial_QG} and Table \ref{tabular:non-tracial_QG}. Table \ref{tabular:tracial_QG} lists all (tracial) quantum graphs on $(M_2,\tau)$ up to isomorphism where Table \ref{tabular:non-tracial_QG} lists all (nontracial) quantum graphs on $(M_2,\psi_q)$ for $q\in(0,1)$ up to isomorphism. 
We denote $\beta_+=1+\beta$ and $\beta_-\vcentcolon=1-\beta$. 
}

\newpage
\begin{table}[h]
\centering
\footnotesize
\begin{tabu}{c|[2pt]c|c|c}
&\\     & \textbf{quantum edge space} & \textbf{conditions on the variables} & \textbf{see} \\&&&\\
\tabucline[2pt]{-}
&&&\\
     \underline{\textbf{0 edges}}& $\{0\}$ & -- & \ref{pre::example:empty_trivial_complete_qgraph}\\
&&&\\
\hline
&&\\
    \underline{\textbf{1 edge}} &&\\
&&&\\
    $\mathcal{G}^{(1A)}$&$S^{(1A)}=\spann\bigg\{\begin{pmatrix}1&0\\0&1\end{pmatrix}\bigg\}$&--&\ref{pre::example:empty_trivial_complete_qgraph}\\
&&&\\
\hdashline
&&&\\
    $\mathcal{G}_{\alpha,\beta}^{(1B)}$&$S_{\alpha,\beta}^{(1B)}=\spann\bigg\{\begin{pmatrix}\alpha&\beta_+\\\beta_-&\alpha\end{pmatrix}\bigg\}$&
    $\begin{array}{r@{}l}
   \beta = 1 &\Rightarrow \alpha \in [0, \infty), \\
   \beta \in [0,1) &\Rightarrow \arg(\alpha) \in [0, \pi)
\end{array}$&\ref{t1e::prop:properties_of_G_alpha,beta^(1B)}\\
&&&\\
\hline
&&\\
    \underline{\textbf{2 edges}}&&\\
&&&\\
    $\mathcal{G}_{\beta,\gamma,\delta}^{(2)}$&
    $\begin{array}{l}S_{\beta,\gamma,\delta}^{(2)}\\
    =\spann\bigg\{\begin{pmatrix}
                0&\beta_+\\
                \beta_-&0
            \end{pmatrix},\\
            \begin{pmatrix}
                1+\delta&-i\gamma\beta_-\\
                i\gamma\beta_+&1-\delta
            \end{pmatrix}\bigg\}
            \end{array}$
    &$\begin{array}{r@{}l}
    \beta=0,\frac{\gamma}{\delta}\neq\pm i &\Rightarrow 
    \begin{cases}\arg(\gamma)\in[0,\pi),\\ 
    \arg(\delta)=\arg(\gamma)+\pi,\\
    |\delta|\leq|\gamma|,
    \end{cases}\\
    \beta=0,\frac{\gamma}{\delta}=\pm i &\Rightarrow 
    \begin{cases}\gamma\in[0,\infty),\\
    \arg(\delta)=\arg(\gamma)+\pi,\\
    |\delta|\leq|\gamma|,
    \end{cases}\\
    \beta\in(0,1) &\Rightarrow \arg(\gamma)\in[0,\pi),\\
    \beta=1 &\Rightarrow \gamma\in[0,\infty)
\end{array}$&\ref{t2e::prop:properties_of_G_alpha,beta,delta^(2)}\\
&&&\\
\hdashline
&&&\\
    $(\mathcal{G}_{0,\beta}^{(1B)})'$&$(S_{0,\beta}^{(1B)}\oplus\C\mathrm{I}_2)^\perp$&$\beta\in[0,1]$&\ref{pre::lemma:1-1-correspondence_between_qgraphs_with_k_and_n2-k(-1)_edges}, \ref{t1e::prop:properties_of_G_alpha,beta^(1B)}\\
&&&\\
\hline
&&\\
    \underline{\textbf{3 edges}}&&\\
&&&\\
    $(\mathcal{G}^{(1A)})^\perp$&$(S^{(1A)})^\perp$&--&\ref{pre::example:empty_trivial_complete_qgraph}, \ref{pre::lemma:1-1-correspondence_between_qgraphs_with_k_and_n2-k(-1)_edges}\\
&&&\\
\hdashline
&&&\\
    $(\mathcal{G}_{\alpha,\beta}^{(1B)})^\perp$&$(S_{\alpha,\beta}^{1B})^\perp$&see above&\ref{pre::lemma:1-1-correspondence_between_qgraphs_with_k_and_n2-k(-1)_edges}, \ref{t1e::prop:properties_of_G_alpha,beta^(1B)}\\
&&&\\
\hline
&&\\
    \underline{\textbf{4 edges}}&$M_2$& --&\ref{pre::example:empty_trivial_complete_qgraph}
\\&&&
\end{tabu}
\caption{(Tracial) Quantum Graphs on $(M_2,\tau)$.}
\label{tabular:tracial_QG}
\end{table}
\newpage

\begin{table}[h]
\centering
\footnotesize
\begin{tabu}{c|[2pt]c|c|c}
&&\\     & \textbf{quantum edge space} & \textbf{conditions on the variables} & \textbf{see} \\&&\\
\tabucline[2pt]{-}
&&&\\
     \underline{\textbf{0 edges}}& $\{0\}$ & -- & \ref{pre::example:empty_trivial_complete_qgraph}\\
&&&\\
\hline
&&\\
    \underline{\textbf{1 edge}} &&\\
&&&\\
    $\mathcal{G}^{(q,1A)}$&$S^{(q,1A)}=\spann\bigg\{\begin{pmatrix}1&0\\0&1\end{pmatrix}\bigg\}$&--&\ref{pre::example:empty_trivial_complete_qgraph}\\
&&&\\
\hdashline
&&&\\
    $\mathcal{G}_\alpha^{(q,1B)}$&$S_\alpha^{(q,1B)}=\spann\bigg\{\begin{pmatrix}\alpha+q^{-2}&0\\0&\alpha-1\end{pmatrix}\bigg\}$&$\alpha\in\C$&\ref{ntg::prop:qgraph_G_alpha_on_(M2,psi_q)_with_one_edge}\\
&&&\\
\hdashline
&&&\\
    $\mathcal{G}_{\alpha,\beta,\gamma}^{(q,1C)}$&$S_{\alpha,\beta,\gamma}^{(q,1C)}=\spann\bigg\{\begin{pmatrix}\alpha + q^{-2} \gamma & \beta_+ \\
            \beta_- & \alpha - \gamma\end{pmatrix}\bigg\}$&
    $\begin{array}{r@{}l}
   \beta = \pm 1, \alpha \neq 0 &\Rightarrow \alpha \in (0, \infty), \\
                \beta = \pm 1, \alpha = 0 &\Rightarrow \gamma \in [0, \infty), \\
                \beta \in (-1,1), \alpha \neq 0 &\Rightarrow \mathrm{arg}(\alpha) \in [0, \pi), \\
                \beta \in (-1,1), \alpha = 0 &\Rightarrow \mathrm{arg}(\gamma) \in [0, \pi)
\end{array}$&\ref{ntg::prop:qgraph_G_alpha,beta,gamma_with_one_edge}\\
&&&\\
\hline
&&\\
    \underline{\textbf{2 edges}}&&\\
&&&\\
    $\mathcal{G}_{\alpha,\beta,\gamma,\delta}^{(q,2)}$&
    $\begin{array}{l}
    S_{\alpha,\beta,\gamma,\delta}^{(q,2)}=\spann\bigg\{
    \begin{pmatrix}
                q^{-2} \delta & \beta_+ \\
                \beta_- & - \delta
            \end{pmatrix},\\
            \begin{pmatrix}
                1+q^{-2} \gamma    & -i\alpha\beta_--\gamma\overline{\delta} \\
                i\alpha\beta_+-\gamma\overline{\delta} & 1-\gamma
            \end{pmatrix}\bigg\}\end{array}$
    &$\begin{array}{r@{}l}
    \beta = \pm 1, \delta \neq 0 &\Rightarrow \delta \in (0, \infty), \\
            \beta = \pm 1, \delta = 0 &\Rightarrow \alpha \in [0, \infty), \\
            \beta \in (-1,1), \delta \neq 0 &\Rightarrow \mathrm{arg}(\delta) \in [0, \pi), \\
            \beta \in (-1,1), \delta = 0 &\Rightarrow \mathrm{arg}(\alpha) \in [0, \pi)
\end{array}$&\ref{ntg::prop:qgraph_G_alpha,beta,gamma,delta_with_two_edges}\\
&&&\\
\hdashline
&&&\\
    $(\mathcal{G}_{0}^{(q,1B)})'$&$(S_{0}^{(q,1B)}\oplus\C\mathrm{I}_2)^\perp$&--&\ref{pre::lemma:1-1-correspondence_between_qgraphs_with_k_and_n2-k(-1)_edges}, \ref{ntg::prop:qgraph_G_alpha_on_(M2,psi_q)_with_one_edge}\\
&&&\\
\hdashline
&&&\\
    $(\mathcal{G}_{0,\beta,\gamma}^{(q,1C)})'$&$(S_{0,\beta,\gamma}^{(q,1C)}\oplus\C\mathrm{I}_2)^\perp$&see above&\ref{pre::lemma:1-1-correspondence_between_qgraphs_with_k_and_n2-k(-1)_edges}, \ref{ntg::prop:qgraph_G_alpha,beta,gamma_with_one_edge}\\
&&&\\
\hline
&&\\
    \underline{\textbf{3 edges}}&&\\
&&&\\
    $(\mathcal{G}^{(q,1A)})^\perp$&$(S^{(q,1A)})^\perp$&--&\ref{pre::example:empty_trivial_complete_qgraph}, \ref{pre::lemma:1-1-correspondence_between_qgraphs_with_k_and_n2-k(-1)_edges}\\
&&&\\
\hdashline
&&&\\
    $(\mathcal{G}_{\alpha}^{(q,1B)})^\perp$&$(S_{\alpha}^{(q,1B)})^\perp$&see above&\ref{pre::lemma:1-1-correspondence_between_qgraphs_with_k_and_n2-k(-1)_edges}, \ref{ntg::prop:qgraph_G_alpha_on_(M2,psi_q)_with_one_edge}\\
    &&\\
\hdashline
&&&\\
    $(\mathcal{G}_{\alpha,\beta,\gamma}^{(q,1C)})^\perp$&$(S_{\alpha,\beta,\gamma}^{(q,1C)})^\perp$&see above&\ref{pre::lemma:1-1-correspondence_between_qgraphs_with_k_and_n2-k(-1)_edges}, \ref{ntg::prop:qgraph_G_alpha,beta,gamma_with_one_edge}\\
&&&\\
\hline
&&\\
    \underline{\textbf{4 edges}}&$M_2$& -- & \ref{pre::example:empty_trivial_complete_qgraph}
\\&&&
\end{tabu}
\caption{(Nontracial) Quantum Graphs on $(M_2,\psi_q)$ for $q\in(0,1)$.}
\label{tabular:non-tracial_QG}
\end{table}
\newpage

%\input{0.3_preliminaries_neu_250505}
% \input{1_qgraphs_and_qchannels.tex}
%\input{2_directed_qgraphs_on_M2}
%\input{2.1_directed_qgraphs_on_M2}
%\input{3_nontracial_qgraphs.tex}
%\input{2.2_qaut_groups_of_the_G_beta.tex}

%\newpage
\printbibliography

%\appendix

%\input{Chapter/Z_Tabelle}

\end{document}